\documentclass{amsart}
\usepackage{amsmath,amssymb,latexsym, amsfonts, amscd, amsthm}
\usepackage{hyperref}
\hypersetup{colorlinks=false}

\oddsidemargin=0.3 true in 
\evensidemargin=0.3 true in
\textwidth=6.5 true in   

\hoffset -0.3 true in  
\headheight=12pt
\headsep=25pt 
\topmargin=0 true in 
\textheight=8.5 true in

\usepackage{bbm}
\usepackage{enumerate}
\usepackage[all]{xy}

\allowdisplaybreaks
\newtheorem{thm}{Theorem}[section]
\newtheorem{pro}[thm]{Proposition}
\newtheorem{lm}[thm]{Lemma}

\numberwithin{equation}{section}

\theoremstyle{remark}

\newtheorem{rem}[thm]{Remark}

\theoremstyle{definition}

\DeclareMathOperator*{\End}{\textsf{End}}

\DeclareMathOperator*{\Irr}{\textsf{Irr}}
\DeclareMathOperator*{\disc}{\textsf{disc}}
\DeclareMathOperator*{\Gal}{\textsf{Gal}}
\DeclareMathOperator*{\dimi}{\textsf{dim}}

\DeclareMathOperator*{\Sp}{Sp}
\DeclareMathOperator*{\PSp}{PSp}

\DeclareMathOperator*{\GU}{GU}
\DeclareMathOperator*{\U}{U}
\DeclareMathOperator*{\SO}{SO}
\DeclareMathOperator*{\PSO}{PSO}
\DeclareMathOperator*{\SL}{SL}
\DeclareMathOperator*{\GL}{GL}

\DeclareMathOperator*{\GSO}{GSO}
\DeclareMathOperator*{\GSp}{GSp}
\DeclareMathOperator*{\Spin}{Spin}
\DeclareMathOperator*{\PSL}{PSL}
\DeclareMathOperator*{\PGL}{PGL}
\DeclareMathOperator*{\GSpin}{GSpin}

\DeclareMathOperator*{\WD}{\textsf{WD}}
\DeclareMathOperator*{\Out}{\textsf{Out}}
\DeclareMathOperator*{\Ind}{\textsf{Ind}}
\DeclareMathOperator*{\simi}{\textsf{sim}}
\DeclareMathOperator*{\sgn}{\textsf{sgn}}

\DeclareMathOperator*{\temp}{\textsf{temp}}
\DeclareMathOperator*{\ess}{\textsf{ess}}
\DeclareMathOperator*{\unit}{\textsf{unit}}
\DeclareMathOperator*{\ad}{\textsf{ad}}
\DeclareMathOperator*{\scn}{\textsf{sc}}
\DeclareMathOperator*{\el}{\textsf{ell}}

\DeclareMathOperator*{\Nrd}{\textsf{Nrd}}
\DeclareMathOperator*{\Res}{\textsf{Res}}
\DeclareMathOperator*{\der}{\textsf{der}}

\DeclareMathOperator*{\std}{\textsf{std}}
\DeclareMathOperator*{\Aut}{\textsf{Aut}}
\DeclareMathOperator*{\id}{\textsf{id}}

\DeclareMathOperator*{\Hom}{\textsf{Hom}}

\newcommand{\mcA}{\mathcal{A}}

\newcommand{\vp}{\varphi}
\newcommand{\tvp}{\widetilde{\varphi}}
\newcommand{\ttau}{\widetilde{\tau}}
\newcommand{\si}{\sigma}

\newcommand{\s}{\simeq}

\newcommand{\ts}{\widetilde{\sigma}}

\newcommand{\CC}{\mathbb{C}}
\newcommand{\NN}{\mathbb{N}}

\newcommand{\QQ}{\mathbb{Q}}
\newcommand{\ZZ}{\mathbb{Z}}

\def\L{\mathcal L}

\def\cS{\mathcal S}


\newcommand{\tG}{\widetilde{G}}


\title[The local Langlands conjecture for the $p$-adic inner form of $\Sp_4$]{The local Langlands conjecture for the $p$-adic inner form of $\Sp_4$}

\author[Kwangho Choiy]{Kwangho Choiy}

\address{Department of Mathematics,
Southern Illinois University,
Carbondale, IL 62901-4408,
U.S.A.}
\email{kchoiy@siu.edu}


\subjclass[2010]{Primary \textbf{22E50}; Secondary 11F27, 11S37, 20G25 22E35}
\begin{document}
\maketitle 
\begin{abstract}
This paper proves the local Langlands conjecture for the non quasi-split inner form $\Sp_{1,1}$ of $\Sp_4$ over a $p$-adic field of characteristic 0,
by studying the restriction of representations from the non quasi-split inner form $\GSp_{1,1}$ of $\GSp_4$ to $\Sp_{1,1}.$
The $L$-packets for $\Sp_{1,1}$ are constructed based on the earlier work on the local Langlands correspondence for $\GSp_{1,1}$ by Gan and Tantono.
To parameterize them in terms of so-called $S$-groups, we establish and utilize the local Langlands correspondence for reductive dual groups which participate in the theta correspondence with $\Sp_{1,1}$ and $\GSp_{1,1}.$
An interesting phenomenon arises when two distinct members in an $L$-packet of $\GSp_{1,1}$ are restricted to $\Sp_{1,1}.$
\end{abstract}
\setcounter{tocdepth}{1}
\tableofcontents
\section{Introduction} \label{intro}
The primary aim of this paper is to prove the local Langlands conjecture (LLC) for the non quasi-split inner form $\Sp_{1,1}$ of $\Sp_4$ over a $p$-adic field $F$ of characteristic 0.
The conjecture for a general connected reductive group $G$ over $F$ predicts that there is a surjective, finite-to-one map
\[
{\L}: \Pi(G) \longrightarrow \Phi(G),
\] 
where
$\Pi(G)$  denotes the set of isomorphism classes of irreducible smooth complex representations of $G(F),$
$\Phi(G)$ denotes the set of $\widehat{G}$-conjugacy classes of $L$-parameters, and $\widehat{G}$ is the complex
dual group of $G$ \cite{bo79}.
The map ${\L}$ is expected to satisfy a number of natural properties, for instance, it preserves certain $\gamma$-facotrs, $L$-factors, and $\epsilon$-factors, as long as they can be defined in both sides (cf. \cite{ht01, he00}).
For each $L$-parameter $\vp \in \Phi(G),$ its fiber, denoted by $\Pi_{\vp}(G),$ is called an $L$-packet for $G.$
As a part of the conjecture, of great interest is how to parameterize the $L$-packet $\Pi_{\vp}(G).$
For this purpose, we consider a central extension $\cS_{\vp, \scn}(\widehat{G})$ which sits into the following exact sequence
\begin{equation} \label{central ext in intro}
1 \longrightarrow \widehat Z_{\vp, \scn}(G)  \longrightarrow \cS_{\vp, \scn}(\widehat{G}) \longrightarrow \cS_{\vp}(\widehat{G}) \longrightarrow 1,
\end{equation} 
where $\widehat Z_{\vp, \scn}(G)$ is a certain quotient group of the center of a simply connected covering group and $\cS_{\vp}(\widehat{G})$ is the connected component group of a certain group in the adjoint group $\widehat{G}/Z(\widehat{G}).$
The precise definitions will be reviewed in Section \ref{conj str of L-packets}.
It is conjectured that there is a one-to-one correspondence
\begin{equation} \label{1-1 in intro}
\Pi_{\vp}(G) \overset{1-1}{\longleftrightarrow} \Irr(\cS_{\vp, \scn}(\widehat{G}), \zeta_{G})
\end{equation}
(see \cite{vog93, art06, art12}),
where $\Irr(\cS_{\vp, \scn}(\widehat{G}), \zeta_{G})$ denotes the set of irreducible representations of $\cS_{\vp, \scn}(\widehat{G})$ with central character $\zeta_{G},$ which is determined by $G$ via the Kottwitz isomorphism \cite{kot86}.
When $G$ is quasi-split, the character $\zeta_{G}$ turns out to be the trivial character $\mathbbm{1},$ so that 
\[
\Irr(\cS_{\vp, \scn}(\widehat{G}), \mathbbm{1}) = \Irr(\cS_{\vp}(\widehat{G})).
\]

The LLC has been proved for several cases for the last few decades \cite{gk82, rog90, he00, ht01,    gtsp10, hs11,  gt, scholze13, art12, gtan12, kmsw14, mok13}.
Among them, our proof of the LLC for $\Sp_{1,1}$ is deeply rooted in the approach to the LLC for $\SL_n$ \cite{gk82}.
In their paper, Gelbart and Knapp studied the restriction of $L$-packets of $\GL_n,$ consisting of singletons, and established the LLC for $\SL_n,$ assuming the LLC for $\GL_n$ which was proved afterwords \cite{ht01, he00, scholze13}.
It is a consequence of their study that $\cS_{\vp}(\widehat{\SL_n})$ is abelian.
This method was adjusted to the case of $\Sp_4$ \cite{gtsp10} whose $L$-packets were constructed from those of $\GSp_4$ \cite{gt} by the restriction. 
Since $L$-packets for $\GSp_4$ consist of one or two members unlike $\GL_n,$ Gan and Takeda verified that those two members do not give the same restriction in $\Sp_4$ (see \cite[Proposition 2.2]{gtsp10}).
They also determined precisely the size of $L$-packets $\Pi_{\vp}(\Sp_4)$ for any $\vp,$ so that one can have an explicit description of $\cS_{\vp}(\widehat{\Sp_4}),$ which turns out to be an elementary 2-group (cf. \cite{gp92, art12}). 

The method of restriction was also utilized in the case of non quasi-split $F$-inner forms $\SL_n'$ of $\SL_n$ \cite{hs11}.
Based on the LLC for $\GL_n$ and the local Jacquet-Langlands correspondence \cite{jl, dkv, rog83, ba08}, Hiraga and Saito established the LLC for non quasi-split $F$-inner forms $\GL_n'$ of $\GL_n$ and constructed $L$-packets for $\GL_n',$ which are all singletons.
Extending Gelbart and Knapp's work for $\SL_n$ \cite{gk82} and Labesse and Langlands' result for $\SL_2'$ \cite{ll79}, they restricted the $L$-packets of $\GL_n'$ to  $\SL_n'$ and constructed $L$-packets for $\SL_n'.$
Unlike $\SL_n$ and $\Sp_4,$ the conjectural bijection \eqref{1-1 in intro} for $\SL_n',$ which was proved in \cite{hs11} using the simple trace formula, implies that the multiplicity one property fails in this restriction.
This follows from the fact that the central extension $\cS_{\vp, \scn}(\widehat{\SL_n'})$ is not always abelian (cf. \cite{ll79, art06, choiymulti}).
Later, more general intermediate groups between a given group and its derived group were also carried out in \cite{chaoli}.

For our case of $G=\Sp_{1,1},$ as did in the previous studies above for $\SL_n,$ $\Sp_4,$ and $\SL_n',$ it is natural to begin with the non quasi-split $F$-inner form $\GSp_{1,1}$ of $\GSp_4,$ whose derived group is $\Sp_{1,1}.$ 
The LLC for $\GSp_{1,1}$ was established by Gan and Tantono in \cite{gtan12}, where their $L$-packets consist of one or two members.
Restricting these $L$-packets from $\GSp_{1,1}$ to $\Sp_{1,1},$ we construct $L$-packets for $\Sp_{1,1}.$
Interestingly, we here encounter a new phenomenon which does not occur in the aforementioned cases;
not only the multiplicity one property fails, but it is also possible that two members in an $L$-packet of $\GSp_{1,1}$ have the same restriction.
In response to this phenomenon, we study the restriction of the reductive dual groups which participate in the theta correspondence with $\GSp_{1,1}$ and $\Sp_{1,1}.$
It turns out that their group structures fit into 
a category of intermediate groups between a finite product of $\GL_n$ and a finite product of $\SL_n,$ or between their $F$-inner forms.
This allows us to use the well-developed theory of the restriction in \cite{ll79, gk82, sh83, la85, tad92,  hs11}, and \cite{chaoli} with some modifications.
We thus construct $L$-packets, prove the conjectural bijection \eqref{1-1 in intro}, and establish the LLC for these reductive dual groups.

We finally define a surjective, finite-to-one map 
\begin{equation} \label{our L in intro}
{\L}_{1,1} : {\Pi}({\Sp}_{1,1}) \longrightarrow \Phi({\Sp}_{1,1})
\end{equation}
for $G=\Sp_{1,1}.$
We then prove the conjectural bijection \eqref{1-1 in intro}, using the LLC for each reductive dual group and studying a relationship of the central extension \eqref{central ext in intro} between $\Sp_{1,1}$ and each reductive dual group (see Theorem \ref{1-1 for Sp11}).
At this point, we bring in a uniquely determined bijection between restrictions of reductive dual pairs (see Proposition \ref{bij bw consti}).
This bijection is provided by the theta correspondence and preserves the multiplicity in the restriction (see Section \ref{sec bij in res}).
Furthermore, we take the same idea to complete the conjectural bijection \eqref{1-1 in intro} for the split group $\Sp_4$ as well (see Appendix \ref{revisit}), which was also discussed in \cite[pp.3002-3003]{gtsp10} in another way.

Considering two facts: there is no Whittaker model for the non quasi-split group $\Sp_{1,1}$ and our method relies on that of $\SL_n'$ by Hiraga and Saito in \cite{hs11}, 
we should mention that our parameterization of $L$-packets for $\Sp_{1,1}$ has no natural base point and depends on the choice of a certain homomorphism $\Lambda_{\SL_n}$ described in Section \ref{for sl(m,D)}.
We also note that there is one exceptional case of $L$-parameters, denoted by `Case I-(b)' in Section \ref{para}, 
where we verify only the bijection \eqref{1-1 in intro} and establish no decomposition unlike other cases (see Theorem \ref{1-1 for Sp11}). 
This is because two members in an $L$-packet of $\GSp_{1,1}$ share the same restriction for Case I-(b), 
which leads to a difference between the dimension in restriction from $\GSp_{1,1}$ to $\Sp_{1,1}$ and the dimension of irreducible representations in  $\Irr(\cS_{\vp, \scn}(\widehat{\Sp_{1,1}}), \zeta_{\Sp_{1,1}}).$
This will be also interpreted in terms of the center of the finite group $\cS_{\vp, \scn}(\widehat{G})$ for the non quasi-split group $\Sp_{1,1}$ and its reductive dual groups (see Remark \ref{rem special for a-1b}).

The last part of the paper is dedicated to classifying the central extension \eqref{central ext in intro} for all $\vp \in \Phi(\Sp_{1,1})$ 
and illuminating all sizes of $L$-packets of $\Sp_{1,1}$ as well as all multiplicities in restriction from $\GSp_{1,1}$ to $\Sp_{1,1}.$
The group $\cS_{\vp, \scn}(\widehat{\Sp_{1,1}})$ in \eqref{central ext in intro} turns out to be isomorphic to one of the following seven groups: $\ZZ/2\ZZ,$ $(\ZZ/2\ZZ)^2,$ $\ZZ/4\ZZ,$  $\ZZ/2\ZZ \times \ZZ/4\ZZ,$  the dihedral group $\mathcal{D}_8$ of order 8, the Pauli group, and the central product of $\mathcal{D}_8$ and the quaternion group.
The size is either 1, 2, or 4, and the multiplicity is either 1, 2, or 4. 
Furthermore, we give an explicit example where the new phenomenon arises, that is, the multiplicity one property fails and two members in an $L$-packet of $\GSp_{1,1}$ have the same restriction in $\Sp_{1,1}.$

We note that, since the definitions of the local $L$-, $\epsilon$-, $\gamma$-factors on the representation side are not yet available for the non quasi-split group $\Sp_{1,1},$ our paper does not contain arguments regarding the preservation of the local factors or the uniqueness of our $\L$-map \eqref{our L in intro}.
Recently, M. Asgari and the author established the LLC for $\GSpin_4,$ $\GSpin_6,$ and their $F$-inner forms in \cite{acgspin}, 
where the local factors for generic representations of two split cases of $\GSpin_4$ and $\GSpin_6$ are available via the Langlands-Shahidi method \cite{sh90} and those local factors are proved to be preserved via the $\L$-map for $\GSpin_4$ and $\GSpin_6.$

The organization of this paper is as follows. In Section \ref{prelim}, we review basic notions and backgrounds such as inner forms, the LLC in a general setting, and the conjectural structure of $L$-packets. 
Section \ref{gp structure} describes the structure of groups under consideration and their mutual relations.
Some well-known results on the restriction are recalled and modified for our case in Section \ref{whole sec of rest}.
We prove the LLC for $\GSO_{2,2},$ $\SO_{2,2},$ and their $F$-inner forms in Section \ref{LLC so22 and its inner forms} and the LLC for $\GSO_{3,3}$, $\SO_{3,3},$ and their $F$-inner forms in Section \ref{LLC so33 and its inner forms}.
These groups are all reductive dual groups which participate in the theta correspondence with $\GSp_{1,1}$ and $\Sp_{1,1}.$ 
In Section \ref{LLC for Sp_{1,1}}, we state and prove our main result, classify all cases of the central extension \eqref{central ext in intro} for $\Sp_{1,1},$ describe all sizes of $L$-packets of $\Sp_{1,1},$ and give an explicit example in which the new phenomenon appears.
In Appendix \ref{revisit}, we apply the same method developed in Section \ref{LLC for Sp_{1,1}} to parameterize $L$-packets for $\Sp_4.$
\section{Preliminaries} \label{prelim}
In this section, we recall basic notions and backgrounds, and review the local Langlands conjecture in a general setting and the conjectural structure of $L$-packets.
\subsection{Basic definitions and backgrounds} \label{notations}
Throughout the paper, we denote by $F$ a finite extension of $\QQ_p$ for any prime $p.$  
Fix an algebraic closure $\bar F$ of 
$F.$
For any topological group $G,$ we write $Z(G)$ for the center of $G.$
We let $\pi_0(G)$ denote the group $G/G^\circ$ of connected components of $G,$ where $G^\circ$ is the identity component of $G.$

Given a connected reductive algebraic group $G$ over $
F,$ we let $\Pi(G)$ denote the set of isomorphism classes of irreducible smooth complex representations of the group $G(F)$ of $F$-points of $G.$
By abuse of notation, we identify an isomorphism class with its representative. 
We let $\Pi_{\disc}(G),$ $\Pi_{\temp}(G),$ and $\Pi_{\unit}(G)$ denote the subsets of $\Pi(G)$ which respectively consist of discrete series, tempered, and unitary representations. 
We further denote by $\Pi_{\ess, \disc}(G)$ and $\Pi_{\ess, \temp}(G)$ the subsets of $\Pi(G)$ which respectively consist of essentially square-integrable and essentially tempered representations.
Note that we have
\[
\Pi_{\disc}(G) \subset \Pi_{\temp}(G) \subset \Pi_{\unit}(G) \quad \text{and} \quad 
\Pi_{\ess, \disc}(G) \subset \Pi_{\ess, \temp}(G) \subset \Pi(G).
\]
We denote by $W_F$ the Weil group of $F$ and by $\Gamma$ the absolute Galois group $\Gal(\bar{F} / F).$ 
Let $\WD_F = W_F \times \SL_2(\CC)$ be the Weil-Deligne group.
Fixing $\Gamma$-invariant splitting data, 
we define the $L$-group of $G$ as a semi-direct product $^{L}G := \widehat{G} \rtimes \Gamma$ (see \cite[Section 2]{bo79}).
Let $\Phi(G)$ denote the set of $\widehat{G}$-conjugacy classes of $L$-parameters, i.e., admissible homomorphisms 
\[
\vp: {\WD}_F \longrightarrow {^L}G,
\]
(see \cite[Section 8.2]{bo79}). 
We denote by $C_{\vp}(\widehat{G})$ the centralizer of the image of $\vp$ in $\widehat{G}.$
Note that $C_{\vp}$ contains the center of $^{L}G$ that is the $\Gamma$-invariant group $Z(\widehat{G})^{\Gamma}.$
We say that $\vp$ is elliptic if the quotient group $C_{\vp} / Z(\widehat{G})^{\Gamma}$ is finite, and $\vp$ is tempered if $\vp(W_F)$ is bounded. 
We denote by $\Phi_{\el}(G)$ and $\Phi_{\temp}(G)$ the subset of $\Phi(G)$ which respectively consist of elliptic and tempered $L$-parameters of $G.$ 
We set $\Phi_{\disc}(G) = \Phi_{\el}(G) \cap \Phi_{\temp}(G).$
\subsection{Inner forms} \label{inner forms}
Let $G$ and $G'$ be connected reductive groups over $F.$ 
We say that $G$ and $G'$ are \textit{$F$-inner forms} with respect to an $\bar{F}$-isomorphism $\phi: G' \overset{\sim}{\rightarrow} G$ if $\phi \circ \tau(\varphi)^{-1}$ is an inner automorphism ($g \mapsto xgx^{-1}$) defined over $\bar{F}$ for all $\tau \in \Gal (\bar{F} / F)$ (see \cite[2.4(3)]{bo79} or \cite[p.280]{kot97}). If there is no confusion, we often omit the references to $F$ and $\phi.$ 

When $G$ and $G'$ are inner forms of each other, we have $^L{G} \s {^LG'}$ \cite[Section 2.4(3)]{bo79}. In particular, if $G'$ is an inner form of an $F$-split group $G$ and the action of $\Gamma$ on $\widehat{G}$ is trivial, we write $^L{G} = \widehat{G} \s {^LG'} = \widehat{G'}.$
\subsection{General notion of the local Langlands conjecture} \label{llc for general}
For any connected reductive group $G$ over $F,$ 
the local Langlands conjecture (LLC) predicts that there is a surjective, finite-to-one map
\[
{\L}: \Pi(G) \longrightarrow \Phi(G).
\]
This map is supposed to satisfy a number of natural properties, for instance, it preserves certain $\gamma$-facotrs, $L$-factors, and $\epsilon$-factors, as long as they can be defined in both sides  (cf. \cite{ht01, he00}).
Moreover, considering the fibers of the map, one can partition $\Pi(G)$ into disjoint finite subsets, called $L$-packets. 
Each packet is conjectured to be characterized by component groups in the $L$-group, which we will discuss in detail in Section \ref{conj str of L-packets}.
It is also expected that $\Phi_{\disc}(G)$ and $\Phi_{\temp}(G)$ respectively parameterize $\Pi_{\disc}(G)$ and $\Pi_{\temp}(G).$

The LLC is known for several cases: $\GL_n$ \cite{ht01, he00, scholze13}, $\SL_n$ \cite{gk82}, 
$\U_2$ and $\U_3$ \cite{rog90}, 
$F$-inner forms of $\GL_n$ and $\SL_n$ \cite{hs11},
$\GSp_4$ \cite{gt},
$\Sp_4$ \cite{gtsp10}, 
the $F$-inner form of $\GSp_4$ \cite{gtan12}, 
quasi-split classical groups \cite{art12},
unitary groups \cite{mok13}, 
non quasi-split inner forms of unitary groups \cite{kmsw14}, $\GSpin_4,$ $\GSpin_6$ and their $F$-inner forms \cite{acgspin}.
\subsection{Conjectrual structure of $L$-packets} \label{conj str of L-packets}
We denote by $\widehat{G}_{\scn}$ the simply connected cover of the derived group $\widehat{G}_{\der}$ of $\widehat{G},$ and by $\widehat{G}_{\ad}$ the adjoint group $\widehat{G}/Z(\widehat{G}).$
We consider 
\[
S_{\vp}(\widehat{G}):=C_{\vp}(\widehat G) / Z(\widehat{G})^{\Gamma} \subset \widehat{G}_{\ad}.
\]
Write $S_{\vp, \scn}(\widehat{G})$ for the full pre-image of $S_{\vp}(\widehat{G})$ via the isogeny $\widehat{G}_{\scn} \twoheadrightarrow \widehat{G}_{\ad}.$ 
We then have an exact sequence
\begin{equation} \label{exact isogeny}
1 \longrightarrow Z(\widehat G_{\scn}) \longrightarrow S_{\vp, \scn}(\widehat{G}) \longrightarrow S_{\vp}(\widehat{G}) \longrightarrow 1.
\end{equation}
We let: 
\begin{align*}
\cS_{\vp}(\widehat{G}) &:= \pi_0(S_{\vp}(\widehat{G})), \\
\cS_{\vp, \scn}(\widehat{G}) & := \pi_0(S_{\vp, \scn}(\widehat{G})), \\
\widehat Z_{\vp, \scn}(G) & := Z(\widehat G_{\scn}) / (Z(\widehat G_{\scn}) \cap S_{\vp, \scn}(\widehat{G})^{\circ}), \\
\cS_{\vp, \scn}(\widehat{G}) & := \pi_0(S_{\vp, \scn}(\widehat{G})).
\end{align*}
We then have a central extension 
\begin{equation} \label{central ext}
1 \longrightarrow \widehat Z_{\vp, \scn}(G)  \longrightarrow \cS_{\vp, \scn}(\widehat{G}) \longrightarrow \cS_{\vp}(\widehat{G}) \longrightarrow 1,
\end{equation}
(cf. \cite[(9.2.2)]{art12}).
Suppose $G$ is quasi-split and $G'$ is an $F$-inner form of $G.$
Let $\zeta_{G'}$ be a unique character on $Z(\widehat G_{\scn})$ whose restriction to $Z(\widehat G_{\scn})^{\Gamma}$ corresponds to 
the class of the $F$-inner form $G'$ of $G$ via the Kottwitz isomorphism \cite[Theorem 1.2]{kot86}.  
We denote by $\Irr(\cS_{\vp, \scn}(\widehat{G}), \zeta_{G'})$ the set of irreducible representations of $\cS_{\vp, \scn}(\widehat{G})$ 
with central character $\zeta_{G'}$ on  $Z(\widehat G_{\scn}).$ 
It is expected that, given an $L$-parameter $\vp$ for $G',$ 
there is a one-to-one correspondence between the $L$-packet $\Pi_{\vp}(G')$ associated to $\vp$ and the set $\Irr(\cS_{\vp, \scn}(\widehat{G}), \zeta_{G'})$ \cite[Section 3]{art06}. 
We note that, for the case of $G'=G,$  the character $\zeta_{G'}$ equals the trivial character $\mathbbm{1},$ so that 
\[
\Irr(\cS_{\vp, \scn}(\widehat{G}), \mathbbm{1}) = \Irr(\cS_{\vp}(\widehat{G})).
\]
In particular, if $\vp$ is elliptic, 
since $C_{\phi}(\widehat M) / Z(\widehat{M})^{\Gamma}$ is finite and $Z(\widehat{M})^{\Gamma}$ contains $S_{\phi}(\widehat M)^{\circ}$ \cite[Lemma 10.3.1]{kot84}, 
we have $S_{\vp}(\widehat{G})=\cS_{\vp}(\widehat{G})$ and
$\widehat Z_{\vp, \scn}(\widehat{G}_{\scn}) = Z(\widehat G_{\scn}).$
Thus the exact sequence \eqref{central ext} is equal to \eqref{exact isogeny}.

\subsection{Further notation} \label{further notation}
Let $m,$ $n,$ and $d$ be positive integers. For a central division algebra $D_d$ of dimension $d^2$ over $F,$ we let  $\GL_{m}(D_d)$ 
denote the group of all invertible elements of $m \times m$ matrices over $D_d.$  
Let $\SL_m(D_d)$ be the subgroup of elements of reduced norm 1 in $\GL_m(D_d).$
Note that $\GL_{m}(D_d)$ is the group of $F$-points of an algebraic group over $F$ which is an $F$-inner form of $\GL_n$ (see \cite[Sections 2.2 \& 2.3]{pr94} for details). 
By abuse of notation, we shall write $\GL_m(D_d)$ for both the $F$-inner form and the group of its $F$-points.
The same is applied to $\SL_m(D_d).$ 

For $i \in \NN,$ we denote by $H^i(F, G) := H^i(\Gal (\bar{F} / F), G(\bar{F}))$
the Galois cohomology of $G.$
Given $\pi \in \Pi(G),$ we denote by $\omega_{\pi}$ its central character. 
The cardinality of a finite set $S$ is denoted by $|S|.$ We denote by $( \; \cdot \; )^D$ the Pontryagin dual, i.e., $\Hom(\; \cdot \; , \CC^{1}),$ where $\CC^{1}$ is the unit circle group in $\CC^{\times}.$ 
We denote by $\mathbbm{1}$ the trivial character.
For any positive integer $n,$ we denote by $\mu_n$ the algebraic group, so that $\mu_n(R):= \{ r \in R : r^n = 1 \}$ with any $F$-algebra $R.$
We write $A \sqcup B$ for the disjoint union of two sets $A$ and $B.$
\section{Group structures} \label{gp structure}
In this section, we describe the structure of algebraic groups under consideration in the paper and discuss their mutual relations.
We mainly follow notation in \cite{gt, gtan12}.
Of our interest are the $F$-split groups $\GSp_4,$ $\Sp_4,$ $\GSO_{2,2},$ $\SO_{2,2},$ $\GSO_{3,3},$ $\SO_{3,3},$ and their non quasi-split $F$-inner forms.
We refer the reader to \cite[p.119]{sa71} for admissible diagrams of those $F$-inner forms.
\subsection{Symplectic cases}
We write $\GSp_{1,1}$ and $\Sp_{1,1}$ for the non quasi-split $F$-inner forms of the symplectic similitude group $\GSp_4$ and the symplectic group $\Sp_4,$ respectively.
The group $\GSp_{1,1}$ is isomorphic to $\GU(2,D)$ which is the similitude group of the unique $2$-dimensional Hermitian vector space over the quaternion division algebra $D$ over $F.$
For more details, we refer the reader to \cite[Section 2.1]{gtan12}.
Note that $\GSp_{1,1}$ is the only (up to $F$-isomorphism) non quasi-split $F$-inner form of $\GSp_4,$ 
since the set $H^1(F, \PSp_4),$ which parameterizes $F$-inner forms of $\GSp_4,$ is in bijection with $\mu_2(\CC)^D$ by the Kottwitz isomorphism \cite[Theorem 1.2]{kot86}.
Likewise, the same argument is true for $\Sp_4.$ We further note that
\[
{\Sp}_4 = ({{\GSp}_4})_{\der} \subset {\GSp}_4 \quad \text{and} \quad 
{\Sp}_{1,1} = ({{\GSp}_{1,1}})_{\der} \subset {\GSp}_{1,1},
\]
where the subscript $\der$ stands for the derived group. 
\subsection{Orthogonal cases} \label{othogonal case}
From \cite{gt, gtan12}, we recall the following isomorphisms of algebraic groups:
\begin{align*}
{\GSO}_{2,2} & \s
 ({\GL}_2 \times {\GL}_2)  / \{ (z, z^{-1}) : z \in {\GL}_1   \}, 
\\
 {\GSO}_{4,0} & \s
 ({\GL}_1(D) \times {\GL}_1(D))  / \{ (z, z^{-1}) : z \in {\GL}_1  \},
 \\
{\GSO}^*_{1,1} & \s
 ({\GL}_1(D) \times {\GL}_2)  / \{ (z, z^{-1}) : z \in {\GL}_1   \}, 
\\
{\GSO}_{3,3} & \s
 ({\GL}_4 \times {\GL}_1)  / \{ (z, z^{-2}) : z \in {\GL}_1  \},
 \\
{\GSO}^*_{3,0} & \s
 ({\GL}_1(D_4) \times {\GL}_1)  / \{ (z, z^{-2}) : z \in {\GL}_1  \},
 \\
{\GSO}(V_D) & \s
 ({\GL}_2(D) \times {\GL}_1)  / \{ (z, z^{-2}) : z \in {\GL}_1  \},
\end{align*}
where $D=D_2$ is the quaternion division algebra over $F,$ and $D_4$ is a division algebra of dimension $16$ over $F.$
As mentioned in \cite[Section 1]{gtan12}, there are only two (up to isomorphism) division algebras of dimension 16, $D_4$ and its opposite $D_4^{op}$  (their Hasse invariants in $\QQ/\ZZ$ are $1/4$ and $-1/4,$ respectively), which have canonically isomorphic multiplicative groups $D_4^{\times}$ and $(D_4^{op})^{\times}$ under the inverse map $x \mapsto x^{-1}$ from $D_4^{\times}$ to $(D_4^{op})^{\times}.$

Note that there are only two (up to isomorphism) non quasi-split $F$-inner forms of the split group ${\GSO}_{2,2},$ which are ${\GSO}_{4,0}$ and ${\GSO}^*_{1,1}.$ 
Further, there are also only two (up to isomorphism) non quasi-split $F$-inner forms of the split group ${\GSO}_{3,3},$ which are ${\GSO}^*_{3,0}$ and ${\GSO}(V_D).$ 

Since $H^1(F, \GL_m(D_d))=1$ for any central division algebra $D_d$ of dimension $d$ over $F$ with any positive integer $m$ (see \cite[Lemma 2.8]{pr94}), 
one can easily verify that the groups of $F$-points are described as follows: 
\begin{align*}
{\GSO}_{2,2}(F) & \s
 ({\GL}_2(F) \times {\GL}_2(F))  / \{ (z, z^{-1}) : z \in F^{\times}   \}, 
\\
 {\GSO}_{4,0}(F) & \s
 ({\GL}_1(D) \times {\GL}_1(D))  / \{ (z, z^{-1}) : z \in F^{\times}  \},
 \\
{\GSO}^*_{1,1}(F) & \s
 ({\GL}_1(D) \times {\GL}_2(F))  / \{ (z, z^{-1}) : z \in F^{\times}  \}, 
\\
{\GSO}_{3,3}(F) & \s
( {\GL}_4(F) \times F^{\times} ) / \{ (z, z^{-2}) : z \in F^{\times}  \},
 \\
{\GSO}^*_{3,0}(F) & \s
 ({\GL}_1(D_4) \times F^{\times}) / \{ (z, z^{-2}) : z \in F^{\times}  \},
  \\
{\GSO}(V_D)(F) & \s
 ({\GL}_2(D) \times F^{\times}) / \{ (z, z^{-2}) : z \in F^{\times}  \}.
\end{align*}

We turn to the split groups ${\SO}_{2,2},$ ${\SO}_{3,3},$ and their non quasi-split $F$-inner forms ${\SO}_{4,0},$ $\SO^*_{1,1},$ ${\SO}^*_{3,0},$ and ${\SO}(V_D).$
We have the following isomorphisms of algebraic groups:
\begin{align*}
{\SO}_{2,2} & \s
 ({\SL}_2 \times {\SL}_2 ) / \Delta \mu_2,  
\\
 {\SO}_{4,0} & \s
 ({\SL}_1(D) \times {\SL}_1(D))  / \Delta \mu_2, 
 \\
{\SO}^*_{1,1} & \s
 ({\SL}_1(D) \times {\SL}_2)  / \Delta \mu_2,  
\\
{\SO}_{3,3} & \s
 {\SL}_4 / \mu_2,
 \\
{\SO}^*_{3,0} & \s
 {\SL}_1(D_4)  / \mu_2,
  \\
{\SO}(V_D) & \s
 {\SL}_2(D)  / \mu_2,
\end{align*}
where $\Delta \mu_2$ means $\{(1,1), (-1,-1) \}.$
We note that
\[
{\SO}_{2,2} = ({\GSO}_{2,2})_{\der} \subset {\GSO}_{2,2},
\]
and the same is true for all the other groups ${\SO}_{4,0},$ ${\SO}^*_{1,1},$ ${\SO}_{3,3},$ ${\SO}^*_{3,0},$ and ${\SO}(V_D).$

Using the fact that $H^1(F, G)=1$ for any simply connected semi-simple algebraic group $G$ over $F$ \cite[Theorem 6.4]{pr94}, we further have the following exact sequences of the groups of $F$-points:
\begin{align*}
1 \longrightarrow 
({\SL}_2(F) \times {\SL}_2(F)) / \{(1,1), (-1,-1) \} 
\longrightarrow
& {\SO}_{2,2}(F)
\longrightarrow 
F^{\times} / (F^{\times})^2
\longrightarrow 1,
\\
1 \longrightarrow 
({\SL}_1(D) \times {\SL}_1(D)) / \{(1,1), (-1,-1) \} 
\longrightarrow
& {\SO}_{4,0}(F)
\longrightarrow 
F^{\times} / (F^{\times})^2
\longrightarrow 1,
\\
1 \longrightarrow 
({\SL}_1(D) \times {\SL}_2(F)) / \Delta \mu_2(F) 
\longrightarrow
& {\SO}^*_{1,1}(F)
\longrightarrow 
F^{\times} / (F^{\times})^2
\longrightarrow 1,
\\
1 \longrightarrow 
{\SL}_4(F) / \{ \pm 1 \}
\longrightarrow
& {\SO}_{3,3}(F)
\longrightarrow 
F^{\times} / (F^{\times})^2
\longrightarrow 1,
\\
1 \longrightarrow 
{\SL}_1(D_4)  / \{ \pm 1 \}
\longrightarrow
& {\SO}^*_{3,0}(F)
\longrightarrow 
F^{\times} / (F^{\times})^2
\longrightarrow 1,
\\
1 \longrightarrow 
{\SL}_2(D)  / \{ \pm 1 \}
\longrightarrow
& {\SO}(V_D)(F)
\longrightarrow 
F^{\times} / (F^{\times})^2
\longrightarrow 1.
\end{align*}
Note that $F^{\times} / (F^{\times})^2$ comes from the isomorphism $H^1(F, \mu_2) \s F^{\times} / (F^{\times})^2 \s H^1(F, \Delta \mu_2).$
Specially, for the case of ${\SO}^*_{1,1},$ considering the kernel of the similitude character
\[
{\simi}^*_{1,1} : {\GSO}^*_{1,1} \rightarrow {\GL}_1,
\]
where $\simi^*_{1,1}(\alpha, \beta) = \Nrd(\alpha \beta)$ and $\Nrd$ is the reduced norm on $\GL_1(D),$
we have
\[
1 \longrightarrow {\SO}^*_{1,1}(F)
\longrightarrow {\GSO}^*_{1,1}(F)
\overset{\simi^*_{1,1}}{\longrightarrow} F^{\times} \longrightarrow H^1(F,{\SO}^*_{1,1}) \longrightarrow \cdots.
\]
Likewise, we have the following exact sequences:
\begin{align*}
1 \longrightarrow {\SO}_{2,2}(F)
\longrightarrow 
& {\GSO}_{2,2}(F)
\overset{\simi_{2,2}}{\longrightarrow} F^{\times} \longrightarrow H^1(F,{\SO}_{2,2}) \longrightarrow \cdots,
\\
1 \longrightarrow {\SO}_{4,0}(F)
\longrightarrow 
& {\GSO}_{4,0}(F)
\overset{\simi_{4,0}}{\longrightarrow} F^{\times} \longrightarrow H^1(F,{\SO}_{4,0}) \longrightarrow \cdots,
\\
1 \longrightarrow {\SO}_{3,3}(F)
\longrightarrow 
& {\GSO}_{3,3}(F)
\overset{\simi_{3,3}}{\longrightarrow} F^{\times} \longrightarrow H^1(F,{\SO}_{3,3}) \longrightarrow \cdots,
\\
1 \longrightarrow {\SO}^*_{3,0}(F)
\longrightarrow 
& {\GSO}^*_{3,0}(F)
\overset{\simi^*_{3,0}}{\longrightarrow} F^{\times} \longrightarrow H^1(F,{\SO}^*_{3,0}) \longrightarrow \cdots,
\\
1 \longrightarrow {\SO}(V_D)(F)
\longrightarrow 
& {\GSO}(V_D)(F)
\overset{\simi_{V_D}}{\longrightarrow} F^{\times} \longrightarrow H^1(F,{\SO}(V_D)) \longrightarrow \cdots.
\end{align*}
\subsection{$L$-groups} \label{L-groups}
We recall the following descriptions of dual groups from \cite[Sections 3 and 4]{gt}:
\begin{align*}
{^L{\GSp}_{1,1}} = \widehat{{\GSp}_{1,1}}  &= {\GSp}_4(\CC) \s {\GSpin}_5(\CC),
\\
{^L{\GSO}^*_{1,1}} = \widehat{{\GSO}^*_{1,1}}  & \s {\GSpin}_4(\CC) 
\\
& \s
( {\GL}_2(\CC) \times {\GL}_2(\CC) )^\circ 
= \{ (g_1, g_2) \in {\GL}_2(\CC) \times {\GL}_2(\CC) : \det g_1 = \det g_2   \} \\
 & \s ({\Spin}_4(\CC) \times \CC^{\times})/\{ (1,1), (-1,-1)\} , \\
{^L{\GSO}^*_{3,0}} = \widehat{{\GSO}^*_{3,0}}  & = {\GSpin}_6(\CC)
\s
 \{ (g_1, g_2) \in {\GL}_4(\CC) \times {\GL}_1(\CC) : \det g_1 = (g_2)^2   \} \\
 & \s ({\Spin}_6(\CC) \times \CC^{\times})/\{ (1,1), (-1,-1)\} .
\end{align*}
The argument on $L$-groups for inner forms in Section \ref{inner forms} implies that:
\begin{align*}
{^L{\GSp}_{1,1}} & = {^L{\GSp}_4},
\\
{^L{\GSO}^*_{1,1}} & = {^L{\GSO}_{2,2}} = {^L{\GSO}_{4,0}},
\\
{^L{\GSO}^*_{3,0}} & = {^L{\GSO}(V_D)} = {^L{\GSO}_{3,3}}.
\end{align*}
We also have the following:
\begin{align*}
{^L{\Sp}_{1,1}} = {^L{\Sp}_4} = \widehat{{\Sp}_{1,1}}   &= {\SO}_5(\CC) \s {\Spin}_5(\CC),
\\
{^L{\SO}^*_{1,1}} = {^L{\SO}_{2,2}} = {^L{\SO}_{4,0}} = \widehat{{\SO}^*_{1,1}}  & = {\SO}_4(\CC) 
\s
({\SL}_2(\CC) \times {\SL}_2(\CC)) / \{ (1,1), (-1, -1) \}, 
\\
{^L{\SO}^*_{3,0}} = {^L{\SO}(V_D)} ={^L{\SO}_{3,3}} = \widehat{{\SO}^*_{3,0}}  & = {\SO}_6(\CC)
\s
{\SL}_4(\CC)/ \mu_2(\CC).
\end{align*}
We consider the following maps $\std_{1,1}$ (this was denoted by $\std$ in \cite{gtsp10}), $\std^*_{1,1},$ and $\std^*_{3,0} :$ 
\begin{align} \label{std}
{\std}_{1,1}: \widehat{{\GSp}_{1,1}} & {\longrightarrow}  \widehat{{\Sp}_{1,1}} = {\SO}_5(\CC), 
\\
{\std}^*_{1,1}: \widehat{{\GSO}^*_{1,1}} & {\longrightarrow} \widehat{{\SO}^*_{1,1}} = {\SO}_4(\CC) \s ({\SL}_2(\CC)\times {\SL}_2(\CC))/\{(1,1), (-1,-1) \},
\\
{\std}^*_{3,0}: \widehat{{\GSO}^*_{3,0}} & {\longrightarrow} \widehat{{\SO}^*_{3,0}} = {\SO}_6(\CC) \s {\SL}_4(\CC)/\mu_2(\CC),
\end{align}
which are respectively induced from the canonical inclusions:
\[
{\Sp}_{1,1}  \hookrightarrow {\GSp}_{1,1}
, \quad
{\SO}^*_{1,1}  \hookrightarrow {\GSO}^*_{1,1}
, \quad
{\SO}^*_{3,0}  \hookrightarrow {\GSO}^*_{3,0}.
\]
Note that these inclusions come from: 
\begin{align*}
1 \longrightarrow {\Sp}_{1,1}
\longrightarrow 
& {\GSp}_{1,1}
\overset{\simi_{1,1}}{\longrightarrow} {\GL}_1
\longrightarrow 1, 
\\
1 \longrightarrow {\SO}^*_{1,1}
\longrightarrow 
& {\GSO}^*_{1,1}
\overset{\simi^*_{1,1}}{\longrightarrow} {\GL}_1
\longrightarrow 1, 
\\
1 \longrightarrow {\SO}^*_{3,0}
\longrightarrow 
& {\GSO}^*_{3,0}
\overset{\simi^*_{3,0}}{\longrightarrow} {\GL}_1
\longrightarrow 1. 
\end{align*}
Therefore, we have the following commutative diagram of dual groups
\[
    \xymatrix{
        \widehat{{\GSO}^*_{1,1}} \ar@{^{(}->}[r] \ar[d]_{\std^*_{1,1}} 
        & \widehat{{\GSp}_{1,1}}  \ar@{^{(}->}[r] \ar[d]_{\std_{1,1}} & \widehat{{\GSO}^*_{3,0}} \ar@{^{(}->}[r] \ar[d]_{\std^*_{3,0}} & {\GL}_4(\CC) \times {\GL}_1(\CC)  \\
        \widehat{{\SO}^*_{1,1}} \ar@{^{(}->}[r] 
        & \widehat{{\Sp}_{1,1}}  \ar@{^{(}->}[r] & \widehat{{\SO}^*_{3,0}} \ar@{^{(}->}[r] & {\GSO}_6(\CC),
        }
\]
where all standard maps $\std^*_{1,1},$ $\std_{1,1},$ $\std^*_{3,0}$ are surjective. 
Further, we have the following exact sequences:
\begin{equation} \label{surjective prs22}
1 \longrightarrow \CC^{\times}
\longrightarrow
\widehat{{\GSO}^*_{1,1}} \overset{{\std}^*_{1,1}}{\longrightarrow} \widehat{{\SO}^*_{1,1}} \longrightarrow 1,
\end{equation}
\begin{equation} \label{surjective prs11}
1 \longrightarrow \CC^{\times}
\longrightarrow
\widehat{{\GSp}_{1,1}} \overset{{\std}_{1,1}}{\longrightarrow} \widehat{{\Sp}_{1,1}} \longrightarrow 1,
\end{equation}
\begin{equation} \label{surjective prs30}
1 \longrightarrow \CC^{\times}
\longrightarrow
\widehat{{\GSO}^*_{3,0}} \overset{{\std}^*_{3,0}}{\longrightarrow} \widehat{{\SO}^*_{3,0}} \longrightarrow 1.
\end{equation}
We also use the notation $\std_{4}$ for $\GSp_4,$ $\std_{2,2}$ for ${\GSO}_{2,2},$ $\std_{4,0}$ for ${\GSO}_{4,0},$ and $\std_{3,3}$ for ${\GSO}_{3,3}.$
Again, the argument on $L$-groups for inner forms in Section \ref{inner forms} implies that:
\begin{align*}
{\std}_{1,1} & = {\std}_{4},
\\
{\std}^*_{1,1}& = {\std}_{2,2}= {\std}_{4,0},
\\
{\std}^*_{3,0} & ={\std}_{V_D}={\std}_{3,3}.
\end{align*}
\section{Restriction} \label{whole sec of rest}
In this section, we will recall and adjust some well-known results about the restriction.
\subsection{Results of  Gelbart-Knapp, Tadi\'c, and Hiraga-Saito} \label{results in rest}
For a moment, we let $G$ and $\tG$ denote connected reductive algebraic groups over $F$ satisfying the property that
\begin{equation*} 
G_{\der} = \tG_{\der} \subseteq G \subseteq \tG,
\end{equation*}
where the subscript $\der$ stands for the derived group. 
Given $\sigma \in \Pi(G),$ by \cite[Lemma 2.3]{gk82} and \cite[Proposition 2.2]{tad92}, there exists $\ts \in \Pi(\tG)$ such that 
\[
\sigma \hookrightarrow {\Res}_{G}^{\tG}(\ts),
\]
that is, $\sigma$ is an irreducible constituent in the restriction ${\Res}_{G}^{\tG}(\ts)$ of $\ts$ from $\tG(F)$ to $G(F).$
We write both $\Pi_{\sigma}(G)$ and $\Pi_{\ts}(G)$ for the set of equivalence classes of all irreducible constituents of ${\Res}_{G}^{\tG}(\ts).$ 
It follows from \cite[Lemma 2.1]{gk82} and \cite[Proposition 2.4 \& Corollary 2.5]{tad92} that $\Pi_{\sigma}(G)$ is finite and independent of the choice of the lifting $\ts \in \Pi(\tG).$ 
Further, for any irreducible constituents $\si_1$ and  $\si_2$ in ${\Res}_{G}^{\tG}(\ts),$ it is clear that $\Pi_{\si_1}(G) = \Pi_{\si_2}(G).$
\begin{rem} \label{rem in rest from to}
From \cite[Proposition 2.7]{tad92}, we note that any member in $\Pi_{\ts}(G)$ is supercuspidal if and only if $\ts$ is. The same is true for essentially square-integrable and essentially tempered representations.
\end{rem}
\begin{pro}(\cite[Lemma 2.4]{gk82}; \cite[Corollary 2.5]{tad92}) \label{pro for lifting}
Given $\ts_1$, $\ts_2 \in \Pi(\tG),$ the following statements are equivalent:
\begin{itemize}
  \item[1)]  There exists a character $\chi \in (\tG / G)^D$ such that $\ts_1 \s \ts_2 \otimes \chi;$
  \item[2)]  $\Pi_{\ts_1}(G) \cap \Pi_{\ts_2}(G) \neq \emptyset;$
  \item[3)]  $\Pi_{\ts_1}(G) = \Pi_{\ts_2}(G).$
\end{itemize}
\qed
\end{pro}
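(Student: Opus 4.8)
The plan is to establish the cycle of implications $(1)\Rightarrow(3)\Rightarrow(2)\Rightarrow(1)$, using throughout the Clifford-theoretic facts recalled just above: for every $\ts\in\Pi(\tG)$ the restriction $\Res_G^{\tG}(\ts)$ is a nonzero, finite-length, semisimple $G(F)$-representation whose irreducible constituents form a single $\tG(F)$-conjugacy orbit with one common multiplicity (\cite[Lemma 2.1]{gk82}; \cite[Propositions 2.2 and 2.4, Corollary 2.5]{tad92}), and the abelian group $\tG(F)/G(F)$ acts on $\Pi(\tG)$ by $\ts\mapsto\ts\otimes\chi$ for $\chi\in(\tG/G)^D$.

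First, $(1)\Rightarrow(3)$: if $\ts_1\s\ts_2\otimes\chi$ with $\chi\in(\tG/G)^D$ then $\chi$ is trivial on $G(F)$, so $\Res_G^{\tG}(\ts_1)\s\Res_G^{\tG}(\ts_2)$, and the two restrictions have exactly the same irreducible constituents, i.e. $\Pi_{\ts_1}(G)=\Pi_{\ts_2}(G)$. Next, $(3)\Rightarrow(2)$ is immediate from $\Pi_{\ts}(G)\neq\emptyset$, which holds because $\Res_G^{\tG}(\ts)$ is nonzero of finite length.

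The essential step is $(2)\Rightarrow(1)$. Pick $\sigma\in\Pi_{\ts_1}(G)\cap\Pi_{\ts_2}(G)$; since $\sigma$ is a subrepresentation of each $\Res_G^{\tG}(\ts_i)$, Frobenius reciprocity for smooth induction yields $\Hom_{\tG(F)}\big(\ts_i,\Ind_{G(F)}^{\tG(F)}\sigma\big)\s\Hom_{G(F)}\big(\Res_G^{\tG}\ts_i,\sigma\big)\neq 0$, so $\ts_1$ and $\ts_2$ both embed into $\Ind_{G(F)}^{\tG(F)}(\sigma)$. It therefore suffices to show that any two irreducible $\tG(F)$-subrepresentations of $\Ind_{G(F)}^{\tG(F)}(\sigma)$ differ by a twist by some $\chi\in(\tG/G)^D$. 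To make Clifford theory applicable I would first twist $\ts_2$ by a character of $\tG(F)/G(F)$ so that $\ts_1$ and $\ts_2$ have the same central character on $Z(\tG(F))$ — this is possible because $\omega_{\ts_1}\omega_{\ts_2}^{-1}$ is already trivial on $Z(\tG(F))\cap G(F)$, hence is a character of the subgroup $Z(\tG(F))G(F)/G(F)$ of the locally profinite abelian group $\tG(F)/G(F)$, and thus extends to the whole group. After this reduction $\tG(F)/G(F)Z(\tG(F))$ is finite (for $p$-adic $F$ the image of $Z(\tG(F))$ has finite index in $\tG(F)/G(F)$, cf. the similitude character), so Clifford/Mackey theory modulo the center applies verbatim: the irreducible subrepresentations of $\Ind_{G(F)}^{\tG(F)}(\sigma)$ then form a single orbit under $(\tG(F)/G(F)Z(\tG(F)))^D\subseteq(\tG/G)^D$, whence $\ts_1\s\ts_2\otimes\chi$ for some $\chi\in(\tG/G)^D$.

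The only genuinely delicate point is the non-compactness of $\tG(F)/G(F)$: in the cases of interest this quotient is $F^{\times}$ via a similitude character, so one cannot invoke finite-group Clifford theory directly. This is handled either by the central-character reduction just described, or by appealing directly to \cite[Proposition 2.4, Corollary 2.5]{tad92}, which already record the relevant orbit statement without any compactness hypothesis; in either case no new ingredient beyond the cited results is required.
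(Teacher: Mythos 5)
The paper does not prove this proposition at all: it is quoted verbatim from the cited sources (Gelbart--Knapp, Lemma 2.4, and Tadi\'c, Corollary 2.5) and closed with \verb|\qed|, so there is no in-paper argument to compare against. Your reconstruction is essentially the standard proof from those references and is correct in outline: $(1)\Rightarrow(3)$ and $(3)\Rightarrow(2)$ are immediate as you say, and $(2)\Rightarrow(1)$ is the Clifford-theoretic step, handled exactly as in Gelbart--Knapp by reducing modulo the center so that the quotient becomes finite.

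Two points deserve tightening. First, in the key step you apply the ``single orbit'' conclusion to the irreducible subrepresentations of $\Ind_{G(F)}^{\tG(F)}(\sigma)$, but that induced representation still carries the non-compact quotient $\tG(F)/G(F)$; the reduction you describe only becomes effective if you first extend $\sigma$ to $\sigma^+$ on $H:=G(F)Z(\tG(F))$ by the (now common) central character $\omega_{\ts_1}|_{Z(\tG(F))}$ --- legitimate because $Z(\tG(F))\cap G(F)$ acts on $\sigma$ by $\omega_{\ts_i}|_{Z(\tG(F))\cap G(F)}$ --- and then embed both $\ts_i$ into $\Ind_{H}^{\tG(F)}(\sigma^+)$, where $\tG(F)/H$ is finite and finite-group Clifford theory for an abelian quotient gives $\ts_1\s\ts_2\otimes\chi$. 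Your phrase ``modulo the center'' gestures at this, but the displayed induction is still from $G(F)$. Second, the character you produce is built from $\omega_{\ts_1}\omega_{\ts_2}^{-1}$ together with a character of the finite quotient, so it is in general only a smooth quasi-character of $\tG(F)/G(F)$, not a unitary one; since the paper's Section \ref{further notation} defines $(\,\cdot\,)^D$ as $\Hom(\,\cdot\,,\CC^1)$, one must read $(\tG/G)^D$ here in the looser sense of smooth characters (as in the cited sources) for the equivalence $(2)\Rightarrow(1)$ to be literally true. Neither point is a gap in the mathematics, only in the bookkeeping.
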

The restriction ${\Res}_{G}^{\tG}(\ts)$ is completely reducible by \cite[Lemma 2.1]{gk82} and \cite[Lemma 2.1]{tad92}, we have the following decomposition 
\begin{equation*} 
{\Res}_{G}^{\tG}(\ts) = m \bigoplus _{\tau \in \Pi_{\ts}(G)}  \tau.
\end{equation*}
Here, the positive integer $m$ denotes the common multiplicity over $\tau \in \Pi_{\sigma}(G)$ (see \cite[Lemma 2.1(b)]{gk82}). 
Given $\ts \in \Pi(\tG),$ we define 
\begin{equation} \label{def of I sigma}
I(\ts) := \{ \chi \in (\tG(F)/G(F))^D : \ts \otimes \chi \s \ts  \}.
\end{equation}
We later use $I^G(\ts)$ to emphasize groups (see Section \ref{pf of 1-1 for Sp11}).
Considering the dimension of the $\CC$-vector space $\End_G({\Res}_{G}^{\tG}(\ts)),$ we have the following equality (cf. \cite[Proposition 3.2]{choiymulti})
\begin{equation} \label{multi and characters}
m^2 \cdot |\Pi_{\sigma}(G)| = |I(\ts)|.
\end{equation}
Let $\chi \in I(\ts)$ be given. 
Based on \cite[Chapter 2]{hs11}, since $\ts \s \ts \otimes \chi,$ 
we have a non-zero endomorphism $I_{\chi} \in {\Aut}_{\CC}(V_{\ts})$ such that
\[
I_{\chi} \circ (\ts \otimes \chi)  = \ts \circ I_{\chi}.
\]
For each $z \in \CC^\times,$ we denote by $z \cdot \id_{V_{\ts}}$ the scalar endomorphism $\tilde v \mapsto z \cdot \tilde v$ for $v \in V_{\ts}.$ 
So, we identify $\CC^\times$ and the subgroup of ${\Aut}_{\CC}(V_{\ts})$ consisting of $z \cdot \id_{V_{\ts}}.$
We now define $\mcA(\ts)$ as the subgroup of ${\Aut}_{\CC}(V_{\ts})$ generated by $\{I_{\chi} : \chi \in I(\ts) \}$ and $\CC^\times.$ Then the map $I_{\chi} \mapsto \chi$ induces the following exact sequence
\begin{equation*} 
1 \longrightarrow \CC^\times \longrightarrow \mcA(\ts) \longrightarrow I(\ts) \longrightarrow 1.
\end{equation*}
We equip $\mcA(\ts)$ with the topology 
such that the induced topology on $\CC^\times$ is the induced topology by the usual topology on $\CC$ and 
such that the projection $\mcA(\ts) \rightarrow I(\ts)$ is continuous with respect to the discrete topology on $I(\ts).$  
We denote by $\Irr(\mcA(\ts), \id)$ the set of isomorphism classes of irreducible smooth representations 
of the group $\mcA(\ts)$ such that $z\cdot \id_{V_{\ts}} \in \CC^\times$ acts as the scalar $z.$ 
By \cite[Corollary 2.10]{hs11}, we then have an isomorphism
\begin{equation}  \label{useful decomp}
V_{\ts} ~ ~ \s \bigoplus_{\xi \in \Irr(\mcA(\ts), \id)} \xi \boxtimes  \si_{\xi}
\end{equation}
as representations of the semi-direct product $\mcA(\ts) \rtimes G(F).$ It follows that there is a bijection
\begin{equation}  \label{bij 333}
\Irr(\mcA(\ts), \id) \overset{\s}{\longrightarrow} \Pi_{\ts}(G),
\end{equation}
sending $\xi \mapsto  \si_{\xi}.$ 
We denote by $\xi_{\si}$ the inverse of $\sigma$ via the correspondence \eqref{bij 333}.
\begin{rem} \label{dim1=dim2}
Given $\ts \in \Pi_{\disc}(\tG),$ since the multiplicity $m$ is common, the isomorphism \eqref{useful decomp} implies that 
\[
m={\dimi}\xi_{\si_1} = {\dimi}\xi_{\si_2}
\]
for any $\si_1, \si_2 \in \Pi_{\ts}(G).$
\end{rem}
\subsection{Useful arguments} \label{thm by Labesse}
We discuss a few arguments which will be used in Sections \ref{LLC so22 and its inner forms}, \ref{LLC so33 and its inner forms}, and \ref{LLC for Sp_{1,1}}. 
We first recall a theorem of Labesse in \cite{la85} which verifies the existence of a lifting of a given $L$-parameter in the following setting. 
Let $G$ and $\tG$ be connected reductive algebraic groups over $F$ with an exact sequence of connected components of $L$-groups
\[
1 \longrightarrow \widehat{S} \longrightarrow \widehat{\tG} \overset{pr}{\longrightarrow}\widehat{G} \longrightarrow 1,
\]
where $\widehat{S}$ is a central torus in $\widehat{G},$ and the surjective homomorphism $pr$ is compatible with $\Gamma$-actions on $\widehat{\tG}$ and $\widehat{G}.$
\begin{thm} (\cite[Th\'{e}or\`{e}m 8.1]{la85}) \label{thm by Labesse}
For any $\vp \in \Pi(G),$ there exists $\tvp \in \Pi(\tG)$ such that
\[
\vp = \tvp \circ pr.
\] \qed
\end{thm}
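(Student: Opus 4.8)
The plan is to derive the existence of a lift $\tvp$ from the exact sequence
\[
1 \longrightarrow \widehat{S} \longrightarrow \widehat{\tG} \overset{pr}{\longrightarrow} \widehat{G} \longrightarrow 1
\]
of $L$-groups by a cohomological argument, exactly in the spirit of Labesse's Th\'eor\`eme 8.1 in \cite{la85}. Since $\widehat{S}$ is a central torus in $\widehat{\tG}$ stable under $\Gamma$, pushing out along a cocycle gives an obstruction class to lifting $\vp : \WD_F \to {^L}G$ through $pr$; the point is that this obstruction vanishes. Concretely, I would first reduce to lifting on $W_F$ (the $\SL_2(\CC)$-factor of $\WD_F$ is simply connected and maps into $\widehat{\tG}_{\der}$, so there is no obstruction there and the two lifts can be glued), and then view the problem as a question about continuous cohomology of $W_F$ with coefficients in the complex torus $\widehat{S}$.

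The key steps, in order, are: (1) record that $pr$ is $\Gamma$-equivariant and $\widehat{S}$ is a central torus, so that choosing any continuous set-theoretic section $s$ of $pr$ produces a $2$-cocycle $c(w_1,w_2) = s(\vp(w_1)) s(\vp(w_2)) s(\vp(w_1 w_2))^{-1} \in \widehat{S}$, whose class in $H^2(W_F, \widehat{S})$ is the obstruction to the existence of $\tvp$; (2) invoke the fact that for the Weil group $W_F$ of a $p$-adic field and a complex torus $\widehat{S} \s (\CC^\times)^r$, the relevant continuous cohomology group $H^2(W_F, \widehat{S})$ vanishes — this is the arithmetic heart of Labesse's argument and rests on $W_F$ having cohomological dimension $2$ together with the divisibility of $\CC^\times$ (one reduces to $H^2(W_F, \CC^\times)$ and uses that it is trivial, or more precisely that the obstruction automatically dies because $\widehat{S}$ is divisible and connected); (3) conclude that $c$ is a coboundary, adjust the section $s$ accordingly to get a genuine homomorphism $\tvp : W_F \to \widehat{\tG}$ with $pr \circ \tvp = \vp|_{W_F}$; (4) extend over $\SL_2(\CC)$ using the canonical lift of $\vp|_{\SL_2(\CC)}$ into $\widehat{\tG}_{\der}$ and check the two pieces are compatible, yielding $\tvp \in \Phi(\tG)$ with $\vp = \tvp \circ pr$; and (5) verify admissibility of $\tvp$ (semisimplicity of $\tvp(W_F)$ and the Frobenius-semisimple, boundedness-type conditions) — these are inherited from those of $\vp$ since $pr$ has central kernel.

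The main obstacle is step (2): establishing that the obstruction class in $H^2(W_F, \widehat{S})$ vanishes. This is where one must be careful about which cohomology theory is used (continuous cochains valued in the complex points of the torus, as in \cite[Section 2]{bo79}) and must correctly invoke the structure theory of $W_F$ — in practice one cites Labesse's result \cite[Th\'eor\`eme 8.1]{la85} verbatim, since he works in exactly this generality (an exact sequence $1 \to \widehat S \to \widehat{\tG} \to \widehat G \to 1$ with $\widehat S$ a central torus). Given that the statement I am asked to prove \emph{is} Labesse's theorem as quoted, the honest proof is essentially a citation: the content is that our sequences \eqref{surjective prs22}, \eqref{surjective prs11}, \eqref{surjective prs30}, with $\widehat S = \CC^\times$ central, fall under his hypotheses, so the lift $\tvp$ exists. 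I would therefore present the ``proof'' as the verification that the hypotheses of \cite[Th\'eor\`eme 8.1]{la85} are met in each case of interest, and then quote the conclusion; no new argument beyond Labesse's is needed.
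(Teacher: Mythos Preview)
Your proposal is correct, and in fact you arrive at exactly what the paper does: the theorem is stated with a \qed\ and no proof, being simply a quotation of \cite[Th\'eor\`eme 8.1]{la85}. Your sketch of the underlying cohomological obstruction argument (steps (1)--(5)) is a reasonable outline of Labesse's proof, but the paper itself supplies none of this and treats the result as a black box; your concluding paragraph, that the ``proof'' amounts to checking the hypotheses of Labesse's theorem and citing it, is precisely the paper's stance.
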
 
We note that this result has been also discussed in \cite{weil74, he80, gtsp10} for the case of $G=SL_n$ and $\tG=GL_n.$ 

Second, we recall a lemma of Chao and Li in \cite{chaoli}.
\begin{lm} (\cite[Lemma 5.3.4]{chaoli}) \label{lemma of Chaoli}
With the notation in Section \ref{conj str of L-packets}, given $\vp \in \Pi(G)$ and $\tvp \in \Pi(\tG)$ with $\vp = \tvp \circ pr$ as in Theorem \ref{thm by Labesse},  we have an exact sequence of finite groups
\[
\cS_{\tvp}(\widehat{\tG}) \longrightarrow \cS_{\vp}(\widehat{G}) \longrightarrow X(\tvp) \longrightarrow 1,
\]
where $X(\tvp) := \{ a \in H^1(W_F, \widehat{S}) : a \tvp \s \tvp \mbox{ in } \widehat{\tG} \}.$ \qed
\end{lm}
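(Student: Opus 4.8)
This is \cite[Lemma 5.3.4]{chaoli}, and the plan is to reconstruct its proof in the notation at hand. The idea is to first establish, at the level of centralizers, an exact sequence of (a priori non-abelian) groups
\[
C_{\vp}(\widehat{G}) \overset{\delta}{\longleftarrow} C_{\tvp}(\widehat{\tG}), \qquad C_{\tvp}(\widehat{\tG}) \overset{pr}{\longrightarrow} C_{\vp}(\widehat{G}) \overset{\delta}{\longrightarrow} X(\tvp) \longrightarrow 1 ,
\]
and then to push it through the functor $\pi_{0}\bigl((\,\cdot\,)/Z(\,\cdot\,)^{\Gamma}\bigr)$ so as to obtain the asserted sequence of finite groups $\cS_{\tvp}(\widehat{\tG}) \to \cS_{\vp}(\widehat{G}) \to X(\tvp) \to 1$. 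Since $\vp$ is obtained from $\tvp$ by composition with $pr$ (Theorem \ref{thm by Labesse}), the homomorphism $pr$ carries $C_{\tvp}(\widehat{\tG})$ into $C_{\vp}(\widehat{G})$, which produces the first arrow.

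To construct the connecting map $\delta$, I would take $s \in C_{\vp}(\widehat{G})$, choose (using the surjectivity of $pr$) a lift $\tilde s \in \widehat{\tG}$ of $s$, and set
\[
a_{\tilde s}(w) := \tilde s\, \tvp(w)\, \tilde s^{-1}\, \tvp(w)^{-1}, \qquad w \in W_{F},
\]
the products being taken in ${}^{L}\tG$. Applying $pr$ gives $s\, \vp(w)\, s^{-1}\, \vp(w)^{-1} = 1$, so $a_{\tilde s}$ takes values in $\widehat{S} = \ker(pr)$; because $\widehat{S}$ is central in $\widehat{\tG}$, conjugation by $\tvp(w)$ on $\widehat{S}$ is simply the Galois action, and a direct check gives the cocycle identity, so $a_{\tilde s} \in Z^{1}(W_{F}, \widehat{S})$. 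From $a_{\tilde s}(w)\, \tvp(w) = \tilde s\, \tvp(w)\, \tilde s^{-1}$ one reads off that $a_{\tilde s}\cdot\tvp$ is $\widehat{\tG}$-conjugate to $\tvp$, hence $[a_{\tilde s}] \in X(\tvp)$. Replacing $\tilde s$ by $\tilde s z$ with $z \in \widehat{S}$ alters $a_{\tilde s}$ by the coboundary $w \mapsto z\cdot {}^{w}z^{-1}$, so $\delta(s) := [a_{\tilde s}]$ is independent of the lift, and a further short computation (again using the centrality of $\widehat{S}$) shows $\delta$ is a homomorphism. By construction $\ker\delta = pr\bigl(C_{\tvp}(\widehat{\tG})\bigr)$, and $\delta$ is surjective onto $X(\tvp)$: if $[a] \in X(\tvp)$ with $a\cdot\tvp = g\, \tvp\, g^{-1}$ for some $g \in \widehat{\tG}$, then $pr(g) \in C_{\vp}(\widehat{G})$ and $\delta(pr(g)) = [a]$. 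This yields the exactness of the centralizer sequence.

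It remains to apply $\pi_{0}\bigl((\,\cdot\,)/Z(\,\cdot\,)^{\Gamma}\bigr)$. As $pr$ sends $Z(\widehat{\tG})^{\Gamma}$ into $Z(\widehat{G})^{\Gamma}$ and identity components into identity components, the first arrow descends to $\cS_{\tvp}(\widehat{\tG}) \to \cS_{\vp}(\widehat{G})$. For $\delta$ one verifies that it factors through $\cS_{\vp}(\widehat{G}) = \pi_{0}\bigl(C_{\vp}(\widehat{G})/Z(\widehat{G})^{\Gamma}\bigr)$: it vanishes on $Z(\widehat{G})^{\Gamma}$ because, using $Z(\widehat{G}) = Z(\widehat{\tG})/\widehat{S}$ together with the fact that the dual groups relevant here carry trivial $\Gamma$-action, a central element $c$ of $\widehat{G}$ lifts to a $\Gamma$-fixed central element $\tilde c$ of $\widehat{\tG}$, which centralizes $\tvp$, so $c = pr(\tilde c) \in pr\bigl(C_{\tvp}(\widehat{\tG})\bigr) = \ker\delta$; and it vanishes on $C_{\vp}(\widehat{G})^{\circ}$ because, once $X(\tvp)$ is known to be finite, $C_{\tvp}(\widehat{\tG}) = \ker\delta$ has finite index in $pr^{-1}\bigl(C_{\vp}(\widehat{G})\bigr)$ and therefore contains the connected subgroup $pr^{-1}\bigl(C_{\vp}(\widehat{G})^{\circ}\bigr)$. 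Exactness of $\cS_{\tvp}(\widehat{\tG}) \to \cS_{\vp}(\widehat{G}) \to X(\tvp) \to 1$ then follows from the exactness of the centralizer sequence by a routine diagram chase, the image of the first map being normal since it is the kernel of the induced homomorphism $\cS_{\vp}(\widehat{G}) \to X(\tvp)$.

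The step I expect to be the genuine obstacle is the finiteness of $X(\tvp)$ — equivalently, that $\tvp$ admits only finitely many self-twists by classes in $H^{1}(W_{F}, \widehat{S})$ — since this is precisely what is needed both for $\delta$ to descend to the component group and for all three terms of the sequence to be finite, as claimed. In every situation in which the lemma is invoked here, $\widehat{S}$ is a one-dimensional central torus and $\tvp$ is (essentially) tempered, built from general linear factors, so the requisite finiteness, and with it the whole argument, is inherited from the corresponding statements for those factors.
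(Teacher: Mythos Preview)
The paper does not prove this lemma: it is stated with the citation \cite[Lemma 5.3.4]{chaoli} and closed immediately with a \qed, so there is no ``paper's own proof'' to compare against. Your proposal is a reasonable reconstruction of the standard argument behind the cited result---lift to centralizers, build the connecting cocycle $a_{\tilde s}(w)=\tilde s\,\tvp(w)\,\tilde s^{-1}\,\tvp(w)^{-1}$, check exactness, then pass to component groups---and it is essentially how the Chao--Li proof goes.

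One small caution: in your passage to $\cS_{\vp}(\widehat{G})$ you invoke that ``the dual groups relevant here carry trivial $\Gamma$-action'' to lift $c\in Z(\widehat{G})^{\Gamma}$ to a $\Gamma$-fixed central element of $\widehat{\tG}$. That is fine for every application in this paper (all groups are split), but the lemma as stated is imported as a general fact; in the general setting one does not lift $c$ but rather observes directly that for any central lift $\tilde c$ the cocycle $a_{\tilde c}(w)=\tilde c\cdot{}^{w}\tilde c^{-1}$ is a coboundary, so $\delta$ already kills $Z(\widehat{G})^{\Gamma}$ without any hypothesis on the $\Gamma$-action. With that adjustment your argument is sound.
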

Along with the definition $I(\ts)$ in \eqref{def of I sigma}, given $\tvp \in \Phi(\tG),$ we let
\begin{equation} \label{def of I vp}
I(\tvp) := \{
\chi \in (F^{\times})^D : \tvp \chi \s \tvp \mbox{ in } \widehat{\tG} 
\},
\end{equation}
where $\chi$ is considered as a $L$-parameter in $\Phi(\tG)$ via the local class field theory.
We later use $X^G(\tvp)$ and $I^G(\tvp)$ to emphasize groups (see Section \ref{pf of 1-1 for Sp11}).
\begin{lm} \label{the case of singleton}
Suppose that an $L$-packet for $\tvp \in \Phi(\tG)$ is constructed as a singleton $\{ \ts \}$ and further suppose that $\widehat{S} \s \CC^{\times}.$
 Then we have
\[
X(\tvp) \s I(\tvp).
\]
\end{lm}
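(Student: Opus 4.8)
The plan is to produce mutually inverse maps between $X(\tvp)$ and $I(\tvp)$ by tracking what the hypothesis "$\widehat S\s\CC^\times$ and the $L$-packet of $\tvp$ is a singleton" forces at the level of cocycles. First I would recall the identification of characters: by local class field theory, $(F^\times)^D\s H^1(W_F,\CC^\times)$, and since $\widehat S\s\CC^\times$ sits centrally inside $\widehat{\tG}$, an element $\chi\in(F^\times)^D$ gives a class $a_\chi\in H^1(W_F,\widehat S)$; the condition $\tvp\chi\s\tvp$ in $\widehat{\tG}$ of \eqref{def of I vp} is literally the condition $a_\chi\tvp\s\tvp$ in $\widehat{\tG}$ defining $X(\tvp)$ in Lemma \ref{lemma of Chaoli}. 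So the obvious candidate map $I(\tvp)\to X(\tvp)$, $\chi\mapsto a_\chi$, is well defined, and it is a group homomorphism because the twisting action of $H^1(W_F,\widehat S)$ on parameters is compatible with multiplication of characters (both come from the central embedding $\widehat S\hookrightarrow\widehat{\tG}$ composed with pointwise multiplication of cocycle values).

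Next I would show this map is injective: if $a_\chi$ is trivial in $H^1(W_F,\widehat S)$, i.e.\ $\chi$ corresponds to the trivial class, then by the class-field-theory identification $\chi=\mathbbm 1$, so the kernel is trivial. The heart of the argument is surjectivity, and this is where the singleton hypothesis is used. Given $a\in X(\tvp)\subseteq H^1(W_F,\widehat S)$, I need to realize $a$ as $a_\chi$ for some $\chi\in I(\tvp)$. Because $\widehat S\s\CC^\times$ and the surjection $pr:\widehat{\tG}\twoheadrightarrow\widehat G$ has kernel $\widehat S$, the group $H^1(W_F,\widehat S)$ is exactly $H^1(W_F,\CC^\times)\s(F^\times)^D$, so $a$ already \emph{is} $a_\chi$ for a unique $\chi\in(F^\times)^D$; the only thing to check is that this $\chi$ lands in $I(\tvp)$, i.e.\ that $a\tvp\s\tvp$ as parameters into $\widehat{\tG}$ and not merely up to something weaker. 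But $a\in X(\tvp)$ says precisely $a\tvp\s\tvp$ in $\widehat{\tG}$, so $\chi\in I(\tvp)$ by definition. Thus the map $\chi\mapsto a_\chi$ is a bijection $I(\tvp)\overset{\s}{\to}X(\tvp)$.

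The main obstacle — and the reason the singleton hypothesis appears — is making sure the equivalence relations on both sides genuinely match: $X(\tvp)$ is defined by $\widehat{\tG}$-conjugacy of parameters, while one might worry that $I(\tvp)$ is being compared against the (possibly coarser) relation of lying in the same $L$-packet, i.e.\ giving the same $\ts$ after restriction. When the $L$-packet $\{\ts\}$ is a singleton, "$\tvp\chi$ and $\tvp$ have the same $L$-packet" collapses to "$\tvp\chi\s\tvp$ in $\widehat{\tG}$", eliminating the gap; more precisely, $\tvp\chi$ parametrizes $\ts\otimes\chi$ by compatibility of the LLC with twists, and $\ts\otimes\chi\s\ts$ happens iff $\chi\in I(\ts)$, which under the singleton hypothesis matches $\tvp\chi\s\tvp$. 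So I would close the argument by invoking this compatibility of the already-constructed LLC for $\tG$ with character twists, together with the hypothesis that the packet is a singleton, to identify $I(\tvp)$ with $I(\ts)$ and hence with $X(\tvp)$ via the cocycle dictionary above.
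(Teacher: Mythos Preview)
Your first two paragraphs are exactly the paper's argument: under the LLC for $\GL_1$ (local class field theory) one has $(F^\times)^D \s H^1(W_F,\CC^\times) = H^1(W_F,\widehat S)$, and under this identification the defining condition $\tvp\chi \s \tvp$ in $\widehat{\tG}$ for $I(\tvp)$ is literally the defining condition $a\tvp \s \tvp$ in $\widehat{\tG}$ for $X(\tvp)$. That is the entire content of the paper's one-line proof.

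Your third paragraph, however, is fighting an imaginary obstacle. Both $I(\tvp)$ and $X(\tvp)$ are defined purely in terms of the $L$-parameter $\tvp$ and $\widehat{\tG}$-conjugacy (see \eqref{def of I vp} and Lemma~\ref{lemma of Chaoli}); neither invokes $\ts$ or the notion of $L$-packet at all. So there is no ``equivalence-relation mismatch'' to reconcile, and the singleton hypothesis plays no role in the actual proof --- the paper does not use it either. (The singleton hypothesis is stated because that is the only context in which the lemma is applied, and it is what later allows one to pass from $I(\tvp)$ to $I(\ts)$; but that passage is a separate step, carried out for instance in Lemma~\ref{lemma 1 SO4} and Lemma~\ref{lem 2 SO4}, not here.) You may safely delete your third paragraph.
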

\begin{proof}
This is immediate from the LLC for $GL_1$ which asserts
$(F^{\times})^D \s H^1(W_F, \widehat{S}).$ 
\end{proof}

We end this subsection by making an argument on the group of connected components. 
\begin{lm} \label{same connected components}
Let $A$ and $B$ be algebraic groups over $F$ such that $A$ is a normal subgroup of $B$ of finite index.
Then the connected components $A^{\circ}$ and $B^{\circ}$ are identical. 
Further, $\pi_0(A)$ is again a subgroup of $\pi_0(B).$
\end{lm}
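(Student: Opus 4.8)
The plan is to prove the two assertions in sequence: first establish the inclusion $B^{\circ}\subseteq A$, then deduce the equality $A^{\circ}=B^{\circ}$, and finally obtain the statement on $\pi_{0}$ as a formal consequence.

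First I would exploit the finite-index hypothesis. Since $A$ is a (closed) subgroup of $B$ of finite index, one may write $B$ as a finite disjoint union of cosets $B=A\sqcup g_{2}A\sqcup\cdots\sqcup g_{r}A$; each coset is Zariski-closed, so $A$ is the complement of a finite union of closed sets and is therefore also open in $B$. Consequently $A\cap B^{\circ}$ is a nonempty subset of the connected group $B^{\circ}$ that is simultaneously open and closed in $B^{\circ}$, which forces $A\cap B^{\circ}=B^{\circ}$, i.e.\ $B^{\circ}\subseteq A$. (Equivalently, the image of the connected set $B^{\circ}$ under the quotient map $B\twoheadrightarrow B/A$ is a connected subset of the finite discrete group $B/A$ containing the identity, hence is trivial.)

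Then I would observe that $B^{\circ}$ is now a connected subgroup of $A$ containing the identity, so by definition of the identity component $B^{\circ}\subseteq A^{\circ}$; conversely $A^{\circ}$ is a connected subset of $B$ containing the identity, so $A^{\circ}\subseteq B^{\circ}$. Hence $A^{\circ}=B^{\circ}$. Finally, from $A^{\circ}=B^{\circ}$ it follows that $\pi_{0}(A)=A/A^{\circ}=A/B^{\circ}$, which is literally a subgroup of $B/B^{\circ}=\pi_{0}(B)$ --- and in fact a normal subgroup, since $A$ is normal in $B$. The only non-formal input is the elementary fact that a closed subgroup of finite index in an algebraic group is open (which makes the clopen argument applicable); granting that, there is no genuine obstacle, and I do not expect any serious difficulty in carrying out the argument.
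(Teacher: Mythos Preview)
Your proof is correct and follows essentially the same strategy as the paper's: both hinge on the fact that a closed finite-index subgroup is open, and then use connectedness of the identity component. The only difference is in the packaging---the paper shows $A^{\circ}\subseteq B^{\circ}$ first (as an open connected subgroup of $B$) and then argues that a proper containment would disconnect $B^{\circ}$ into finitely many open cosets, whereas you argue the containment $B^{\circ}\subseteq A$ via a clopen argument and then invoke maximality of the identity component in each direction; your version is arguably a bit more direct, and you also correctly observe that normality of $A$ is not needed for $A^{\circ}=B^{\circ}$ but yields the stronger conclusion that $\pi_0(A)$ is normal in $\pi_0(B)$.
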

\begin{proof}
It is well known that $A^{\circ}$ and $B^{\circ}$ are open, closed, normal subgroups of $A$ and $B,$ respectively.
Let $b\in B$ be given. 
Since $bA^{\circ}b^{-1}$ is an open and connected subgroup containing the identity, we have $bA^{\circ}b^{-1} \subset B^{\circ}.$
Note that the index $[A^{\circ}:B^{\circ}]$ is finite.
If $bA^{\circ}b^{-1}$ is a proper subgroup of $ B^{\circ},$ then $B^{\circ}$ is disconnected into finite connected open cosets of $bA^{\circ}b^{-1},$ which is impossible. 
Thus, we must have $bA^{\circ}b^{-1} = B^{\circ},$ which implies that $A^{\circ} = B^{\circ}.$ 
Further, since the index $[A:B]$ is finite and $A^{\circ} = B^{\circ},$ it follows that $\pi_0(A)$ is a subgroup of $\pi_0(B).$
\end{proof}
\subsection{Hiraga-Saito's work on $L$-packets for inner forms of $SL_n$} \label{for sl(m,D)}
We recall a result in \cite[Chapter 12]{hs11} about the internal structure of $L$-packets for an inner form $G'=\SL_m(D_d)$ of $G=\SL_n$ with $n=md.$
Note that $^L G=\widehat{G} = {^L} G' = \widehat{G'} = \PGL_n(\CC),$ since $\Gamma$ acts trivially. 
We further have
\[
{Z}(\widehat{G}_{\scn}) = \bold{\mu_n}(\CC) \quad \text{and} \quad Z(\widehat{G})^{\Gamma} = 1.
\]
Given $\vp \in \Phi(G'),$ we have the following exact sequence
\begin{equation*} 
1 \longrightarrow \widehat Z_{\vp, \scn}(G')  \longrightarrow \cS_{\vp, \scn}(\widehat{G'}) \longrightarrow \cS_{\vp}(\widehat{G'}) \longrightarrow 1.
\end{equation*}
Note that $\widehat Z_{\vp, \scn}(G') = \bold{\mu_n}(\CC) / (\bold{\mu_n}(\CC)  \cap S_{\vp, \scn}(\widehat{G'})^{\circ})$ by definition.
We fix a character $\zeta_{G'}$ of $\bold{\mu_n}(\CC)$ which corresponds to the inner form $G'$ of $G$ via the Kottwitz isomorphism \cite[Theorem 1.2]{kot86}.
Note that when $d=1,$ $G'=G$ and $\zeta_{G} = \mathbbm{1}.$
Set $\tG = \GL_n$ and $\tG' = \GL_m(D_d).$
We consider the following exact sequence
\[
1 \longrightarrow \CC^{\times} \longrightarrow \widehat{\tG'} = {\GL}_n(\CC) \overset{pr}{\longrightarrow} \widehat{G'}={\PGL}_n(\CC) \longrightarrow 1.
\]
By Theorem \ref{thm by Labesse}, we have an $L$-parameter $\widetilde{\vp} \in \Phi(\tG')$ 
\[
\widetilde{\vp} : {\WD}_F \rightarrow {\GL}_n(\CC)
\]
such that ${pr} \circ \widetilde{\vp}=\vp$ (see also \cite{weil74, he80, chgo12}).
By the local Langlands correspondence for $\tG'$ \cite[Chapter 11]{hs11}, 
we have a unique irreducible representation $\ts \in \Pi(\tG')$ associated to the $L$-parameter $\widetilde{\vp}.$ 
The $L$-packet $\Pi_{\vp}(G')$ thus equals the set $\Pi_{\ts}(G')$ (see Section \ref{results in rest}).
\begin{lm} (\cite[Lemma 12.5]{hs11}) 
There is a homomorphism 
$\Lambda_{SL_n} : \cS_{\vp, \scn}(\widehat{G'}) \rightarrow \mcA(\ts)$ 
(unique up to 1-dimensional character of $\cS_{\varphi}(G')$) with the following commutative diagram
\begin{equation} \label{a diagram}
\begin{CD}
1 @>>> \widehat Z_{\vp, \scn}(G')  @>>> \cS_{\vp, \scn}(\widehat{G'}) @>>> \cS_{\vp}(\widehat{G'})  @>>> 1 \\
@. @VV{\zeta_{G}}V @VV{\Lambda_{SL_n}}V @VV{\s}V @.\\
1 @>>> \CC^\times @>>> \mcA(\ts) @>>> I(\ts) @>>> 1.
\end{CD} 
\end{equation} \qed
\end{lm}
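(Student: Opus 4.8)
The plan is to construct $\Lambda_{\SL_n}$ by first pinning down the right-hand vertical isomorphism of \eqref{a diagram} and then comparing the two central extensions it relates. Since $\Gamma$ acts trivially on $\widehat{G'}=\PGL_n(\CC)$, we have $Z(\widehat{G'})^{\Gamma}=1$, so $S_{\vp}(\widehat{G'})=C_{\vp}(\PGL_n(\CC))$; moreover the centralizer $C_{\tvp}(\GL_n(\CC))$ is a product of general linear groups, hence connected, so $\cS_{\tvp}(\widehat{\tG'})=1$ and the exact sequence of Lemma \ref{lemma of Chaoli} collapses to an isomorphism $\cS_{\vp}(\widehat{G'})\overset{\s}{\longrightarrow}X(\tvp)$. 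Because the $L$-packet of $\tvp$ for $\tG'=\GL_m(D_d)$ is the singleton $\{\ts\}$ and $\widehat{S}\s\CC^{\times}$, Lemma \ref{the case of singleton} gives $X(\tvp)\s I(\tvp)$, and the compatibility of the local Langlands correspondence for $\tG'$ with character twists (\cite[Chapter 11]{hs11}), together with $\tG'(F)/G'(F)\s F^{\times}$ via $\Nrd$, converts $I(\tvp)$ into $I(\ts)$. Composing these, we obtain the isomorphism $\cS_{\vp}(\widehat{G'})\overset{\s}{\longrightarrow}I(\ts)$ that will serve as the right vertical arrow.

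As in the foundational setup, the inner-form character $\zeta_{G'}$ is trivial on $\mu_{n}(\CC)\cap S_{\vp,\scn}(\widehat{G'})^{\circ}$, hence descends to a character of $\widehat Z_{\vp,\scn}(G')$, so that the left column of \eqref{a diagram} makes sense. Granting this, the existence of $\Lambda_{\SL_n}$ is equivalent to the assertion that the push-out of $1\to\widehat Z_{\vp,\scn}(G')\to\cS_{\vp,\scn}(\widehat{G'})\to\cS_{\vp}(\widehat{G'})\to1$ along $\zeta_{G'}$ is isomorphic, as an extension of $\cS_{\vp}(\widehat{G'})\s I(\ts)$ by $\CC^{\times}$, to the extension $1\to\CC^{\times}\to\mcA(\ts)\to I(\ts)\to1$; one then lets $\Lambda_{\SL_n}$ be the composite of the natural map $\cS_{\vp,\scn}(\widehat{G'})\to(\text{push-out})$ with that isomorphism, and both squares of \eqref{a diagram} commute by construction. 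Uniqueness up to a $1$-dimensional character of $\cS_{\vp}(\widehat{G'})$ is then formal: any two such lifts differ by a homomorphism $\cS_{\vp,\scn}(\widehat{G'})\to\CC^{\times}$ that is trivial on $\widehat Z_{\vp,\scn}(G')$ and so factors through the finite abelian group $\cS_{\vp}(\widehat{G'})$.

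The substance of the argument, and what I expect to be the main obstacle, is the identification of the classes of these two extensions in $H^{2}(I(\ts),\CC^{\times})$. Given $s\in S_{\vp,\scn}(\widehat{G'})\subset\SL_n(\CC)$, conjugation by $s$ sends $\tvp$ to $\tvp\otimes c_{s}$ for a character $c_{s}$ of $W_F$ with $\tvp\otimes c_{s}\s\tvp$, and through the local Langlands correspondence for $\GL_m(D_d)$ this yields the character $\chi_{s}\in I(\ts)$; the point is that the explicit construction of that correspondence supplies a normalization of the self-intertwiner $I_{\chi_{s}}$ of $\ts$ under which the matrix $s$ determines a well-defined element of $\mcA(\ts)$ lying over $\chi_{s}$, and one checks that the resulting assignment is multiplicative up to a $2$-cocycle representing the class of the push-out. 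Carrying this out rigorously requires the fine structure of the local Langlands correspondence for inner forms of $\GL_n$ and careful tracking of the associated $\mu_{n}(\CC)$-valued cocycles; since this is precisely the content of \cite[Chapter 12]{hs11}, I would invoke it rather than reproduce it here.
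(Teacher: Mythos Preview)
The paper does not prove this lemma at all: it is stated with the citation \cite[Lemma 12.5]{hs11} and immediately closed with \qed. Your proposal is not so much an alternative proof as an expository unpacking of what the Hiraga--Saito argument must accomplish, and in that capacity it is accurate: you correctly obtain the right-hand isomorphism $\cS_{\vp}(\widehat{G'})\s I(\ts)$ by combining the connectedness of $C_{\tvp}(\GL_n(\CC))$, Lemma~\ref{lemma of Chaoli}, Lemma~\ref{the case of singleton}, and twist-compatibility of the LLC for $\GL_m(D_d)$; you correctly note (via \cite[Lemma 9.1]{hs11}) that $\zeta_{G'}$ descends to $\widehat Z_{\vp,\scn}(G')$; and you correctly reduce existence of $\Lambda_{\SL_n}$ to comparing two classes in $H^{2}(I(\ts),\CC^{\times})$, with uniqueness following formally. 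Since you then explicitly invoke \cite[Chapter 12]{hs11} for that comparison, your proposal and the paper's treatment are in the end the same: both defer the actual work to Hiraga--Saito. What your write-up adds over the paper's bare citation is a clear roadmap of why the diagram should exist and where the genuine difficulty lies, which is useful context but not an independent proof.
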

Combining \eqref{useful decomp} and \eqref{a diagram}, \cite[Lemma 12.6]{hs11} states that there is a bijection
\begin{equation} \label{bij sl}
\Pi_{\vp}(G') \overset{1-1}{\longleftrightarrow} \Irr(\cS_{\vp, \scn}(\widehat{G'}), \zeta_{G'}),
\end{equation}
such that we have an isomorphism
\begin{equation*} 
V_{\ts} ~ ~ \s 
\bigoplus_{\rho \in \Irr(\cS_{\vp, \scn}(\widehat{G'}), \zeta_{G'})} \rho \boxtimes  \si_{\rho}
\end{equation*}
as representations of $\cS_{\vp, \scn}(\widehat{G'}) \rtimes G'(F),$
where $\si_{\rho}$ denotes the image of $\rho$ via the bijection \eqref{bij sl}.
It thus follows from \cite[p.5]{hs11} that 
\begin{equation} \label{dim=dim=multi}
\dimi \xi_{\si} = \dimi \rho_{\si},
\end{equation}
where $\rho_{\si}$ is the image of $\sigma$ via the bijection \eqref{bij sl},
which implies that ${\dimi}\rho_{\si_1} = {\dimi}\rho_{\si_2}$ for any $\si_1, \si_2 \in \Pi_{\vp}(G').$ 
\begin{rem} \label{multi for  SL}
From \eqref{a diagram} and \eqref{dim=dim=multi}, the multiplicity in the restriction from $\GL_n(D)$ to $\SL_n(D)$ is controlled by the following two factors: the character $\zeta_{G},$ uniquely determined by a given inner form $G,$ and the group $S_{\vp},$ determined by a given $L-$parameter $\vp.$
\end{rem}
\begin{rem} \label{rem general HS}
All above arguments can be obviously applicable to the case of $G^* = \SL_{n_1} \times \cdots \times \SL_{n_r}.$
\end{rem}
\subsection{A bijection via theta correspondence} \label{sec bij in res}
We recall a bijection between two sets of irreducible constituents in two restrictions via the theta correspondence.
For a moment, we employ the notation $\GU(V_{2n}),$ $\U(V_{2n}),$ $\GU(W_{m}),$ and $\U(W_{m})$ in \cite[Section 2]{gtan12},  
where  $V_{2n}$ and $W_{m}$ respectively denote a quaternionic Hermitian  and skew-Hermitian space over a quaternion $F$-algebra with some positive integers $n$ and $m.$ 
These represent all non qausi-split inner forms described in Section \ref{gp structure}, 
We fix a non-trivial additive character $\psi$ of $F.$ As in \cite[Section 3]{gtan12}, we consider the Weil representation of $\U(V_{2n})\times \U(W_{m})$ and its extension to $R=\GU(V_{2n})\times \GU(W_{m}),$ which respectively give theta correspondences between $\U(V_{2n})$ and $\U(W_{m})$ and between $\GU(V_{2n})$ and $\GU(W_{m}).$ 
\begin{pro}(\cite[Proposition 3.3]{gtan12}) \label{bij bw consti}
Let $\pi \in \Pi(\GU(V_{2n}))$ be given. Set \[
{\Res}^{{\GU}(V_{2n})}_{{\U}(V_{2n})} (\pi) = k \cdot \bigoplus_i \tau_i
\]
for some positive integer $k$ (the common multiplicity) and $\tau_i \in \Pi(\U(V_{2n})).$ Suppose the big theta lift $\Theta(\pi)$ of $\pi$ is nonzero. Then we have the following.

(i) There is an isomorphism
\[
\theta(\pi) \s k \cdot \bigoplus_i \theta_{\psi}(\tau_i)
\]
as representations of $\U(W_{m}).$ Moreover, $\Theta(\pi) = \theta(\pi)$ is semisimple if $\Theta_{\psi}(\tau_i) = \theta_{\psi}(\tau_i)$ is semisimple for all $i.$

(ii) There is a (uniquely determined) bijection
\[
{\Res}^{{\GU}(V_{2n})}_{{\U}(V_{2n})} (\pi) \overset{f}{\longrightarrow} 
{\Res}^{{\GU}(W_{m})}_{{\U}(W_{m})} (\theta(\pi)),
\]
sending $\tau \mapsto \theta_{\psi}(\tau) =: f(\tau),$ which immediately implies the following bijection
\[
\Pi_{\pi}({\U}(V_{2n})) \overset{f}{\longrightarrow}  \Pi_{\theta(\pi)}({\U}(W_{m})).
\]

(iii) The above statements (i) and (ii) are true for $\GU(W_{m})$ and $\U(W_{m})$ when $\GU(V_{2n})$ and $\U(V_{2n})$ are respectively replaced by $\GU(W_{m})$ and $\U(W_{m}).$
\qed
\end{pro}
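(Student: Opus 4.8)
To prove Proposition \ref{bij bw consti}, the plan is to deduce (i)--(iii) from three ingredients: the $R$-equivariance of the extended Weil representation $\omega = \omega_\psi$ of $R = \GU(V_{2n}) \times \GU(W_m)$ recalled in \cite[Section 3]{gtan12}; Howe duality for the isometry dual pair $(\U(V_{2n}), \U(W_m))$; and Clifford theory for the restrictions $\GU(V_{2n}) \to \U(V_{2n})$ and $\GU(W_m) \to \U(W_m)$, whose completely reducible structure with common multiplicity is as in Section \ref{results in rest} (note $\U(V_{2n})_{\der} = \GU(V_{2n})_{\der}$, and likewise for $W_m$). Realise the big theta lift as a Jacquet-type quotient: $\Theta(\pi)$ is the maximal quotient of $\omega$ on which $\GU(V_{2n})$ acts $\pi^\vee$-isotypically, viewed as a smooth representation of $\GU(W_m)$, $\theta(\pi)$ its maximal semisimple quotient, and similarly $\Theta_\psi(\tau), \theta_\psi(\tau)$ for the isometry pair. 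By construction $\Res^R_{\U(V_{2n}) \times \U(W_m)} \omega$ is the Weil representation of the isometry pair.

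First I would compute $\Res^{\GU(W_m)}_{\U(W_m)}\Theta(\pi)$. Since $\Res^{\GU(V_{2n})}_{\U(V_{2n})}\pi = k\bigoplus_i \tau_i$ and $\omega$ restricts to the isometry Weil representation, a Mackey-theoretic descent of the quotient defining $\Theta(\pi)$ from $\GU(V_{2n})$ down to $\U(V_{2n})$, carried out one step at a time and using that $\GU(V_{2n})$ differs from $\U(V_{2n})$ only by the center and the similitude character modulo squares (the bookkeeping of Roberts and of Gan--Takeda \cite{gt} for similitude theta), shows that $\Res^{\GU(W_m)}_{\U(W_m)}\Theta(\pi)$ is assembled from the big isometry lifts $\Theta_\psi(\tau_i)$. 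Hence $\Theta(\pi) \neq 0$ forces some, and then (by the equivariance used below) every, $\Theta_\psi(\tau_i) \neq 0$; isometry Howe duality makes each $\theta_\psi(\tau_i)$ irreducible, and passing to maximal semisimple quotients yields $\Res^{\GU(W_m)}_{\U(W_m)}\theta(\pi) \cong k'\bigoplus_i \theta_\psi(\tau_i)$ for a common multiplicity $k'$, which is (i). The same descent shows that if all $\Theta_\psi(\tau_i)$ are semisimple then $\Res^{\GU(W_m)}_{\U(W_m)}\Theta(\pi)$ is semisimple, whence $\Theta(\pi)$ is, since a smooth representation of $\GU(W_m)$ with semisimple restriction to $\U(W_m)$ is semisimple (reducing to the finite-index case, as $\GU(W_m)/\U(W_m)$ is finite modulo the center, on which $\Theta(\pi)$ carries a central character).

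Next I would pin down the bijection. As $\pi$ is irreducible, Clifford theory puts the $\tau_i$ in a single $\GU(V_{2n})$-orbit; the $R$-equivariance of $\omega$ transports this to a single $\GU(W_m)$-orbit on the $\theta_\psi(\tau_i)$, compatibly, i.e. $\theta_\psi(g\cdot\tau) = g\cdot\theta_\psi(\tau)$ for the relevant action of $g \in \GU(V_{2n})$. On the other hand $\Theta(\pi)\neq 0$ forces $\theta(\pi)$ irreducible, so $\Res^{\GU(W_m)}_{\U(W_m)}\theta(\pi)$ is one Clifford orbit of constituents with common multiplicity; comparing with $k'\bigoplus_i \theta_\psi(\tau_i)$ forces the $\theta_\psi(\tau_i)$ to be pairwise distinct and $k' = k$, and $f\colon \tau_i \mapsto \theta_\psi(\tau_i)$ is the asserted bijection, inducing $\Pi_\pi(\U(V_{2n})) \to \Pi_{\theta(\pi)}(\U(W_m))$. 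Uniqueness is automatic, since any $\tau \hookrightarrow \pi|_{\U(V_{2n})}$ has $\theta_\psi(\tau) \hookrightarrow \theta(\pi)|_{\U(W_m)}$, which characterises $f(\tau)$ among the constituents. Finally (iii) is the mirror of (i)--(ii) under exchanging $V_{2n} \leftrightarrow W_m$ (equivalently $\pi \leftrightarrow \theta(\pi)$), using the symmetry of the theta correspondence.

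The main obstacle is the first step: descending the \emph{big} theta lift, and its maximal semisimple quotient, from $\GU$ to $\U$ along a subgroup that is only of finite index modulo the center and squares of the similitude. One must verify that no constituents are lost in the restriction and that the Howe quotient is compatible with it; this is the technical heart of the similitude theta correspondence, and it requires the careful Mackey-theoretic bookkeeping of Roberts and of Gan--Takeda rather than any naive assertion that restriction commutes with forming theta lifts.
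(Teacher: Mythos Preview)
The paper does not give its own proof of this proposition: it is quoted verbatim from \cite[Proposition 3.3]{gtan12} and closed with \qed. So there is nothing in the present paper to compare your argument against; what you have written is a sketch of the argument that lives in \cite{gtan12} (and, behind it, in Roberts' work on similitude theta). Your outline---descend the big theta lift along $\GU \supset \U$ using the compatibility of the extended Weil representation with the isometry Weil representation, invoke Howe duality for the isometry pair, and organise everything by Clifford theory---is indeed the strategy used there, and you correctly flag the delicate step (controlling $\Theta(\pi)|_{\U(W_m)}$ in terms of the $\Theta_\psi(\tau_i)$ via the Roberts/Gan--Takeda bookkeeping).

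One point to tighten: you invoke ``$\Theta(\pi)\neq 0$ forces $\theta(\pi)$ irreducible'' as an input to deduce that the $\theta_\psi(\tau_i)$ are pairwise distinct and that $k'=k$. In \cite{gtan12} the irreducibility of $\theta(\pi)$ for the similitude pair is not assumed in advance---it is essentially a \emph{consequence} of the analysis in Proposition 3.3, so using it here risks circularity. The cleaner route is: pairwise distinctness of the $\theta_\psi(\tau_i)$ already follows from injectivity of the isometry theta correspondence (the $\tau_i$ are distinct); the $R$-equivariance then shows the $\theta_\psi(\tau_i)$ form a single $\GU(W_m)$-orbit; and the equality $k'=k$ (and hence the irreducibility of $\theta(\pi)$) drops out of the explicit descent computation you allude to, comparing stabilisers under the two Clifford decompositions. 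With that reorganisation your sketch matches the argument in \cite{gtan12}.
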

\section{LLC for $\SO_{2,2}$ and its inner forms} \label{LLC so22 and its inner forms} 
We present the LLC for $\GSO_{2,2}$ and its $F$-inner forms, and establish the LLC for $\SO_{2,2}$ and its $F$-inner forms.
\subsection{The cases of $\GSO_{2,2}$ and its inner forms} \label{LLC GSO22}
From the isomorphism in Section \ref{othogonal case}
\[
{\GSO}_{2,2}(F) \s
 ({\GL}_2(F) \times {\GL}_2(F)) / \{ (z, z^{-1}) : z \in F^{\times}   \},
\]
one can notice that any irreducible admissible representation of ${\GSO}_{2,2}(F)$ is of the form $\ttau_1 \boxtimes \ttau_2,$ where $\ttau_1$ and $\ttau_2$ are in $\Pi(GL_2)$ with the same central character (cf. \cite[Section 1]{gtrep}).
This implies that
\[
\Pi({\GSO}_{2,2})=
\{
\ttau_1 \boxtimes \ttau_2 : \ttau_1, \ttau_2 \in \Pi({\GL}_2) ~~\text{with}~~ \omega_{\ttau_1}=\omega_{\ttau_2}
\}.
\]
Further, due to the form of $L$-group $\widehat{{\GSO}_{2,2}}$ in Section \ref{othogonal case}, we note that
\begin{equation*} 
\Phi({\GSO}_{2,2})=
\{
\tvp_1 \oplus \tvp_2 : \tvp_1, \tvp_2 \in \Phi({\GL}_2) ~~\text{with}~~ \det \tvp_1=\det \tvp_2
\}.
\end{equation*}
Thus, by the LLC for $\GL_n$ \cite{ht01, he00, scholze13}, there is a surjective, one-to-one map 
\[
L_{2,2} : {\Pi}({\GSO}_{2,2}) \longrightarrow \Phi({\GSO}_{2,2}).
\]
For non quasi-split $F$-inner forms of ${\GSO}_{2,2},$ we again recall the isomorphism in Section \ref{othogonal case}:
\begin{align*}
 {\GSO}_{4,0}(F) & \s
 ({\GL}_1(D) \times {\GL}_1(D))  / \{ (z, z^{-1}) : z \in F^{\times}  \},
 \\
{\GSO}^*_{1,1}(F) & \s
 ({\GL}_1(D) \times {\GL}_2(F))  / \{ (z, z^{-1}) : z \in F^{\times}   \}. 
\end{align*}
Similarly, we have:
\begin{align*}
\Pi({\GSO}_{4,0}) & =
\{
\ttau_1 \boxtimes \ttau_2  : \ttau_1, \ttau_2 \in \Pi({\GL}_1(D)) ~~\text{with}~~ \omega_{\ttau_1}=\omega_{\ttau_2}
\},
\\
\Pi({\GSO}^*_{1,1}) & =
\{
\ttau_1 \boxtimes \ttau_2  : \ttau_1 \in \Pi({\GL}_1(D)), \ttau_2 \in \Pi({\GL}_2) ~~\text{with}~~ \omega_{\ttau_1}=\omega_{\ttau_2}
\}.
\end{align*}
Further, due to the form of $L$-groups in Section \ref{othogonal case}, we note that:
\begin{align*}
\Phi({\GSO}_{4,0}) & =
\{
\tvp_1 \oplus \tvp_2 : \tvp_1, \tvp_2 \in \Phi({\GL}_1(D)) ~~\text{with}~~ \det \tvp_1=\det \tvp_2
\},
\\
\Phi({\GSO}^*_{1,1}) & =
\{
\tvp_1 \oplus \tvp_2 : \tvp_1 \in \Phi({\GL}_1(D)), \tvp_2 \in \Phi({\GL}_2) ~~\text{with}~~ \det \tvp_1=\det \tvp_2
\}.
\end{align*}
Thus, by the LLC for $\GL_n$ \cite{ht01, he00, scholze13} and for $\GL_m(D)$ \cite{hs11}, there are surjective, one-to-one maps: 
\begin{align*}
L_{4,0} & : {\Pi}({\GSO}_{4,0}) \longrightarrow \Phi({\GSO}_{4,0}),
\\
L^*_{1,1} & : {\Pi}({\GSO}^*_{1,1}) \longrightarrow \Phi({\GSO}^*_{1,1}).
\end{align*}
Since three maps $L_{2,2},$ $L_{4,0},$ and $L^*_{1,1}$ are one-to-one, each fiber gives rise to $L$-packets for $\GSO_{2,2},$ $\GSO_{4,0},$ and $\GSO^*_{1,1},$ which are all singletons.
For simplicity of notation, we write ${\GSO}_\dagger$ for ${\GSO}_{2,2},$ ${\GSO}_{4,0},$ and ${\GSO}^*_{1,1}.$ 
Further, recalling the notation in Section \ref{conj str of L-packets}, we note that:
\begin{align*}
S_{\tvp}(\widehat{{\GSO}_\dagger}) & \subset (\widehat{{\GSO}_\dagger})_{\ad} \s {\PSO}_4(\CC)\s {\PSL}_2(\CC)\times {\PSL}_2(\CC), \\
S_{\tvp, \scn}(\widehat{{\GSO}_\dagger}) & \subset (\widehat{{\GSO}_\dagger})_{\scn} \s  {\Spin}_4(\CC) \s {\SL}_2(\CC)\times {\SL}_2(\CC).
\end{align*}

\subsection{Construction of $L$-packets for $\SO_{2,2}$ and its inner forms} \label{const of L-packet for SO22}
Given $\sigma \in \Pi({\SO}_{2,2}),$ from the arguments in Section \ref{results in rest}, there is a lifting $\ts \in \Pi({\GSO}_{2,2})$ such that 
\[
\sigma \hookrightarrow {\Res}_{{\SO}_{2,2}}^{{\GSO}_{2,2}}(\ts).
\]
We define a map 
\begin{equation*} 
{\L}_{2,2} : {\Pi}({\SO}_{2,2}) \longrightarrow \Phi({\SO}_{2,2})
\end{equation*}
by $\L_{2,2}(\sigma) := \std_{2,2}(L(\ts)).$ 
Note that $\L_{2,2}$ is not depending on the choice of the lifting $\ts,$ since another lifting must be of the form $\ts \otimes \chi$ for some quasi-character $\chi$ of $F^{\times}$ by Proposition \ref{pro for lifting} and $L_{2,2}(\ts \otimes \chi) = L_{2,2}(\ts) \otimes \chi$ for any quasi-character $\chi$ of $F^{\times}$  \cite{ht01, he00}.
Thus, the map $\L_{2,2}$ is well-defined. 
This is an analogue of the LLC for ${\SL}_n$ \cite{gk82}.

Furthermore, $\L_{2,2}$ is a surjective, 
since any $\vp \in \Phi({\SO}_{2,2})$ can be lifted to some $\tvp \in \Phi({\GSO}_{2,2})$ by \eqref{surjective prs22} and Theorem \ref{thm by Labesse}. 
For each $\vp \in \Phi({\SO}_{2,2}),$ the fiber is given by
\begin{equation*} 
\Pi_{\vp}({\SO}_{2,2}) = \Pi_{\ts}({\SO}_{2,2}),
\end{equation*}
where $\ts$ is the unique member in $\Pi_{\tvp}({\GSO}_{2,2})$ and $\tvp$ lies in $\Phi({\GSO}_{2,2})$ such that $\std_{2,2} \circ \tvp =\vp.$ 
Due to \cite{ht01, he00} and Proposition \ref{pro for lifting}, the fiber does not depend on the choice of $\tvp.$
This forms an $L$-packet for ${\SO}_{2,2}.$

Similarly, given $\sigma_{4,0} \in \Pi({\SO}_{4,0}),$ there is a lifting $\ts_{4,0} \in \Pi({\GSO}_{4,0})$ such that 
\[
\sigma_{4,0} \hookrightarrow {\Res}_{{\SO}_{4,0}}^{{\GSO}_{4,0}}(\ts_{4,0}).
\]
We define a map  
\begin{equation*} 
{\L}_{4,0} : {\Pi}({\SO}_{4,0}) \longrightarrow \Phi({\SO}_{4,0})
\end{equation*}
by $\L_{4,0}(\sigma_{4,0}) := \std_{4,0}(L(\ts_{4,0})).$
In the same way with $\L_{2,2},$ it turns out that $\L_{4,0}$ is a well-defined, surjective, and finite-to-one map.
Likewise, we have a surjective, finite-to-one map
\begin{equation*} 
{\L}^*_{1,1} : {\Pi}({\SO}^*_{1,1}) \longrightarrow \Phi({\SO}^*_{1,1})
\end{equation*}
by $\L^*_{1,1}(\sigma^*_{1,1}) := \std^*_{1,1}(L(\ts^*_{1,1})),$
where $\sigma^*_{1,1}$ and $\ts^*_{1,1}$ are corresponding representations for ${\SO}^*_{1,1}$ and ${\GSO}^*_{1,1},$ respectively. 
For each $\vp \in \Phi({\SO}_{4,0}),$ the $L$-packet is given by
\begin{equation*} 
\Pi_{\vp}({\SO}_{4,0}) = \Pi_{\ts_{4,0}}({\SO}_{4,0}).
\end{equation*}
Likewise, for each $\vp \in \Phi({\SO}^*_{1,1}),$ we have the $L$-packet
\begin{equation*} 
\Pi_{\vp}({\SO}^*_{1,1}) = \Pi_{\ts^*_{1,1}}({\SO}^*_{1,1}).
\end{equation*}
Again, due to \cite{ht01, he00, hs11} and Proposition \ref{pro for lifting}, each $L$-packet does not depend on the choice of $\tvp.$
\subsection{Internal structure of $L$-packets for $\SO_{2,2}$ and its inner forms} \label{para for SO22}
We continue with the notation in Section \ref{conj str of L-packets}. 
For simplicity of notation, we shall write ${\SO}_\dagger$ for 
${\SO}_{2,2},$ ${\SO}_{4,0},$ and ${\SO}^*_{1,1}.$ 
Recall from Section \ref{L-groups} that
\[
\widehat{{\SO}_\dagger} \s {\SO}_4(\CC) \s ({\SL}_2(\CC)\times {\SL}_2(\CC) ) /\{(1,1), (-1,-1) \}. 
\]
Note that: 
\[
(\widehat{{\SO}_\dagger})_{\ad} ={\PSO}_4(\CC), ~~~ 
(\widehat{{\SO}_\dagger})_{\scn} ={\Spin}_4(\CC), ~~~
Z((\widehat{{\SO}_\dagger})_{\scn}) = Z((\widehat{{\SO}_\dagger})_{\scn})^{\Gamma} \s \mu_2(\CC) \times \mu_2(\CC).
\]
Let $\vp \in \Phi({\SO}_\dagger)$ be given.
We fix a lifting $\tvp \in \Phi({\GSO}_\dagger)$ via the surjective map $\widehat{{\GSO}_\dagger}  \twoheadrightarrow \widehat{{\SO}_\dagger}$ (see Theorem \ref{thm by Labesse}). 
We note that:
\begin{align*}
S_{\vp}(\widehat{{\SO}_\dagger}) &\subset {\PSO}_4(\CC) \s {\PSL}_2(\CC)\times {\PSL}_2(\CC),  \\
\cS_{\vp, \scn}(\widehat{{\SO}_\dagger}) &\subset {\Spin}_4(\CC) \s {\SL}_2(\CC)\times {\SL}_2(\CC).
\end{align*}
One can then have a central extension 
\begin{equation*} 
1 \longrightarrow \widehat Z_{\vp, \scn}({\SO}_\dagger)  \longrightarrow \cS_{\vp, \scn}(\widehat{{\SO}_\dagger}) \longrightarrow \cS_{\vp}(\widehat{{\SO}_\dagger}) \longrightarrow 1.
\end{equation*}
Let $\zeta_{2,2},$ $\zeta_{4,0},$ and $\zeta^*_{1,1}$ be characters on $Z((\widehat{{\SO}_\dagger})_{\scn})$ which respectively correspond to $\SO_{2,2},$ $\SO_{4,0},$ and $\SO^*_{1,1}$ via the Kottwitz isomorphism \cite[Theorem 1.2]{kot86}. 

\begin{thm} \label{1-1 for SO4}
Given an $L$-parameter $\vp \in \Phi(\SO_{\dagger}),$ 
we fix a lifting $\tvp \in \Phi(\GSO_{\dagger})$ of $\vp.$ 
Let $\ts$ be the unique member in $\Pi_{\tvp}(\GSO_{\dagger})$ via the LLC for $\GSO_{\dagger}$ in Section \ref{LLC GSO22}.
Then, there is a one-one bijection 
\[
\Pi_{\vp}({\SO}_{\dagger}) \overset{1-1}{\longleftrightarrow} \Pi(\cS_{\vp, \scn}(\widehat{{\SO}_{\dagger}}), \zeta_{\dagger}),
\]
sending $\sigma \mapsto \rho_{\sigma},$ 
such that we have an isomorphism
\[
V_{\ts} ~ ~ \s \bigoplus_{\sigma \in \Pi_{\vp}({\SO}_{\dagger})} \rho_{\sigma} \boxtimes  \si
\]
as representations of $\cS_{\vp, \scn}(\widehat{{\SO}_{\dagger}}) \rtimes {\SO}_{\dagger}(F),$
where the character $\zeta_{\dagger}$ runs through $\zeta_{2,2},$ $\zeta_{4,0},$ $\zeta^*_{1,1},$ according to $\SO_{\dagger}.$ 
\end{thm}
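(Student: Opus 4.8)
The plan is to reduce the theorem to the Hiraga--Saito formalism for inner forms of products of $\SL$'s (Sections~\ref{results in rest},~\ref{for sl(m,D)} and Remark~\ref{rem general HS}), by first isolating the purely restriction-theoretic input and then matching two central extensions. Since ${\SO}_{\dagger}=({\GSO}_{\dagger})_{\der}$, the pair $({\SO}_{\dagger},{\GSO}_{\dagger})$ is exactly of the type treated in Section~\ref{results in rest}: ${\Res}^{{\GSO}_{\dagger}}_{{\SO}_{\dagger}}(\ts)$ is completely reducible, the set $\Pi_{\ts}({\SO}_{\dagger})=\Pi_{\vp}({\SO}_{\dagger})$ is finite and independent of the chosen lift $\tvp$ (by Proposition~\ref{pro for lifting} and the compatibility of the LLC for ${\GSO}_{\dagger}$ with twisting by characters of $F^{\times}$ pulled back along the similitude character), and one has at hand the bijection~\eqref{bij 333} between $\Irr(\mcA(\ts),\id)$ and $\Pi_{\ts}({\SO}_{\dagger})$ together with the $\mcA(\ts)\rtimes{\SO}_{\dagger}(F)$-isomorphism~\eqref{useful decomp}. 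So the theorem becomes equivalent to producing a homomorphism $\Lambda_{\dagger}\colon\cS_{\vp,\scn}(\widehat{{\SO}_{\dagger}})\to\mcA(\ts)$ fitting into a commutative diagram of central extensions of the shape of~\eqref{a diagram}, with left vertical arrow induced by $\zeta_{\dagger}$ and right vertical arrow an isomorphism $\cS_{\vp}(\widehat{{\SO}_{\dagger}})\overset{\s}{\to}I(\ts)$; pulling $\Irr(\mcA(\ts),\id)$ back along $\Lambda_{\dagger}$ and combining with~\eqref{useful decomp} then yields the bijection $\sigma\mapsto\rho_{\sigma}$ and the decomposition of $V_{\ts}$, as in~\cite[Lemma~12.6]{hs11}.

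For the right vertical arrow I would apply Lemma~\ref{lemma of Chaoli} to the exact sequence~\eqref{surjective prs22} (so $\widehat{S}\s\CC^{\times}$), getting $\cS_{\tvp}(\widehat{{\GSO}_{\dagger}})\to\cS_{\vp}(\widehat{{\SO}_{\dagger}})\to X(\tvp)\to 1$. Since $C_{\tvp}(\widehat{{\GSO}_{\dagger}})$ is connected---it is the fibre product over $\CC^{\times}$, via determinants, of the two connected ${\GL}_2(\CC)$-centralizers of the components of $\tvp$ (consistent with the $L$-packets of ${\GSO}_{\dagger}$ being singletons, Section~\ref{LLC GSO22})---we get $\cS_{\tvp}(\widehat{{\GSO}_{\dagger}})=1$, hence $\cS_{\vp}(\widehat{{\SO}_{\dagger}})\s X(\tvp)$, and then $X(\tvp)\s I(\tvp)$ by Lemma~\ref{the case of singleton}. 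Finally the similitude character ${\GSO}_{\dagger}(F)\to F^{\times}$ is onto (the reduced norm on ${\GL}_1(D)$ and $\det$ on ${\GL}_2$ are onto $F^{\times}$), so ${\GSO}_{\dagger}(F)/{\SO}_{\dagger}(F)\s F^{\times}$ and, by the same twisting-compatibility of the LLC for ${\GSO}_{\dagger}$, $I(\tvp)=I(\ts)$; composing gives $\cS_{\vp}(\widehat{{\SO}_{\dagger}})\s I(\ts)$.

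For the map $\Lambda_{\dagger}$ itself I would reproduce the construction of~\cite[Lemmas~12.5--12.6]{hs11} for the pair $({\SO}_{\dagger},{\GSO}_{\dagger})$ directly---this is legitimate because $\GSO_{\dagger}$ has singleton $L$-packets and~\eqref{surjective prs22} has central torus kernel, and because $({\SO}_{\dagger},{\GSO}_{\dagger})$ sits in the tower of intermediate groups between $\prod_{i}{\SL}_{m_i}(D_{d_i})$ and $\prod_{i}{\GL}_{m_i}(D_{d_i})$ ($m_id_i=2$), for which Remark~\ref{rem general HS} and~\cite{chaoli} (with Lemmas~\ref{lemma of Chaoli},~\ref{same connected components}) supply the needed machinery; in the construction $Z(\widehat{G}_{\scn})=\mu_2(\CC)\times\mu_2(\CC)$ plays the role of $\mu_n(\CC)$, and $\zeta_{\dagger}$ is the correct character since the inner class of ${\SO}_{\dagger}$ is the class of the corresponding inner form $\prod_{i}{\SL}_{m_i}(D_{d_i})$ of $\prod_{i}{\SL}_2$ inside $H^1(F,{\PSO}_4)=H^1(F,{\PGL}_2\times{\PGL}_2)$, which is what the Kottwitz isomorphism attaches there. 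The hard part will be precisely this construction and the $\zeta_{\dagger}$-compatibility: unlike for ${\SL}_n$ the center $Z(\widehat{{\SO}_{\dagger}}_{\scn})=\mu_2(\CC)\times\mu_2(\CC)$ is non-cyclic and $\GSO_{\dagger}$ is a quotient of ${\GL}_2\times{\GL}_2$ (or an inner form) rather than a general linear group, and---if one prefers to deduce the result from the product-of-$\SL$'s case rather than redo Hiraga--Saito from scratch---one must handle the finite-index discrepancy between ${\SO}_{\dagger}(F)$ and the image $G^{\flat}$ of $\prod_{i}{\SL}_{m_i}(D_{d_i})(F)$, namely the factor $F^{\times}/(F^{\times})^2$ in the exact sequences of Section~\ref{othogonal case}, tracking how $I(\ts)$, $\mcA(\ts)$ and $\zeta_{\dagger}$ change in passing from $G^{\flat}$ up to ${\SO}_{\dagger}(F)$. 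A minor additional point is the twisting-compatibility of the LLC for the non-split ${\GSO}_{4,0}$ and ${\GSO}^*_{1,1}$, which reduces to that of the LLC for ${\GL}_1(D)$ from~\cite{hs11}.
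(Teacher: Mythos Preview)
Your overall architecture is exactly the paper's: reduce to producing a $\Lambda_{\dagger}$ fitting into a commutative diagram of central extensions as in~\eqref{a diagram}, verify the right vertical isomorphism $\cS_{\vp}(\widehat{{\SO}_{\dagger}})\s I(\ts)$ via Lemma~\ref{lemma of Chaoli} and Lemma~\ref{the case of singleton} (using $\cS_{\tvp}(\widehat{{\GSO}_{\dagger}})=1$), and then read off the bijection and decomposition from~\eqref{useful decomp} and~\eqref{bij 333}. That part is on target and matches the paper's Lemma~\ref{lem 2 SO4} and the concluding paragraph of Section~\ref{pf of 1-1 for SO4}.

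Where you diverge is in how to build $\Lambda_{\dagger}$. You propose either (a) rerunning the Hiraga--Saito construction from scratch for the pair $({\SO}_{\dagger},{\GSO}_{\dagger})$, or (b) deducing it from the product-of-$\SL$'s case by tracking the $F^{\times}/(F^{\times})^2$ discrepancy on the $F$-points side. The paper instead works entirely on the dual side: using the projection $pr^{*}_{1,1}\colon\SO_4(\CC)\twoheadrightarrow\PSL_2(\CC)\times\PSL_2(\CC)$ it shows (Lemma~\ref{subgroup lemma SO4}) that $S_{\vp}(\widehat{{\SO}_{\dagger}})$ is a normal subgroup of finite index in $S_{\vp_0}(\widehat{{\SL}\times{\SL}})$ for the induced parameter $\vp_0$, deduces (Lemma~\ref{lemma 3 SO4}, via Lemma~\ref{same connected components}) that the $\scn$-extensions sit one inside the other with the \emph{same} $\widehat Z_{\scn}\s\mu_2(\CC)\times\mu_2(\CC)$, and then simply \emph{defines} $\Lambda_{\dagger}$ as the restriction $\Lambda_{{\SL}_2\times{\SL}_2}\big|_{\cS_{\vp,\scn}(\widehat{{\SO}_{\dagger}})}$ of the existing Hiraga--Saito map (Proposition~\ref{pro 4 SO4}). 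Since $I(\ts)\subset I(\ts_0)$ (Lemma~\ref{lemma 1 SO4}) the image lands in $\mcA(\ts)\subset\mcA(\ts_0)$, and the $\zeta_{\dagger}$-compatibility is automatic because $\widehat Z_{\vp,\scn}$ literally coincides with $\widehat Z_{\vp_0,\scn}$. This sidesteps both of your difficulties: there is no need to replicate the internals of \cite[Lemma~12.5]{hs11} for a non-$\GL$ ambient group, and the $F^{\times}/(F^{\times})^2$ bookkeeping on the $p$-adic side never enters. Your route~(a) would work but duplicates effort; your route~(b) is the representation-theoretic shadow of what the paper does, but is noticeably messier than the Galois-side embedding.
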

\begin{rem} \label{some +_ SO4} 
Given $\vp \in \Phi(\SO_{2,2}),$ by Theorem \ref{1-1 for SO4}, we have a one-to-one correspondence
\[
\Pi_{\vp}({\SO}_{2,2}) \cup \Pi_{\vp}({\SO}_{4,0}) \cup \Pi_{\vp}({{\SO}^*_{1,1}}^{-+}) \cup \Pi_{\vp}({{\SO}^*_{1,1}}^{+-}) \overset{1-1}{\longleftrightarrow} {\Irr}(\cS_{\vp, \scn}(\widehat{{\SO}_{2,2}})),
\] 
where ${{\SO}^*_{1,1}}^{-+} \s (\SL_1(D) \times \SL_2)/\Delta \mu_2$ and  ${{\SO}^*_{1,1}}^{+-} \s (\SL_2 \times \SL_1(D))/\Delta \mu_2,$ both of which are isomorphic to ${\SO}^*_{1,1}.$
\end{rem}
\subsection{Proof of Theorem \ref{1-1 for SO4}} \label{pf of 1-1 for SO4}
We follow the idea in \cite[Lemma 12.6]{hs11}.
We first deal with the case of $\SO_{\dagger} = \SO^*_{1,1}.$ 
Then, the proofs for the other two cases of ${\SO}_{2,2}$ and ${\SO}_{4,0}$ are the same after replacing $\SL_1(D) \times \SL_2$ by $\SL_2 \times \SL_2$ and $\SL_1(D) \times \SL_1(D),$ respectively.

Let an $L$-parameter $\vp \in \Phi(\SO^*_{1,1})$ be given.
As described in Section \ref{const of L-packet for SO22}, 
there is an $L$-parameter $\tvp \in \Phi(\GSO^*_{1,1})$ such that 
$\std^*_{1,1} \circ \tvp=\vp.$
The description in Section \ref{LLC GSO33} implies that
$\tvp$ is of the form $\tvp_1 \oplus \tvp_2 ,$ 
where 
$\tvp_1 \in \Phi({\GL}_1(D))$ and $\tvp_2 \in \Phi({\GL}_2)$ with $\det \tvp_1 = \det \tvp_2.$

Now, we denote by $\vp_0$ the image in $\PSL_2(\CC) \times \PSL_2(\CC)$ of $\tvp$ via the composite of maps
\[
\widehat{{\GSO}^*_{1,1}} 
\overset{{\std}^*_{1,1}}{\longrightarrow} \widehat{{\SO}^*_{1,1}} = {\SO}_4(\CC) \s ({\SL}_2(\CC)\times {\SL}_2(\CC))/\{(1,1), (-1,-1) \}
\overset{pr^*_{1,1}}{\longrightarrow}
{\PSL}_2(\CC) \times {\PSL}_2(\CC).
\]  
It then follows that $\vp_0 \in \Phi(\SL_1(D) \times \SL_2)$ and $\vp_0 = \vp_1 \oplus \vp_2$ with $\vp_1 \in \Phi({\SL}_1(D))$ and $\vp_2 \in \Phi(\SL_2).$
Note that $\vp_0 = pr \circ \tvp_0,$ where $pr: \GL_2(\CC) \times \GL_2(\CC) \twoheadrightarrow \PSL_2(\CC) \times \PSL_2(\CC)$ is the usual projection map.

Due to Section \ref{const of L-packet for SO22}, we have $\sigma \in \Pi_{\vp}({\SO}^*_{1,1})$ and $\ts \in \Pi_{\tvp}({\GSO}^*_{1,1}).$ 
Note from Section \ref{LLC GSO22}  that 
$\ts$ is of the form $\ttau_1 \oplus \ttau_2$ with $\omega_{\ttau_1}=\omega_{\ttau_1},$ where $\ttau_1 \in \Pi({\GL}_1(D))$ and $\ttau_2 \in \Pi({\GL}_2)$  corresponding to $\tvp_1$ and $\tvp_2$ via the LLC for $\GL_2$ \cite{he00, ht01, scholze13} and for $\GL_1(D)$ \cite{hs11}, respectively.
\begin{lm} \label{subgroup lemma SO4}
With the notation above, $S_{\vp}(\widehat{{\SO}^*_{1,1}})$ is a normal subgroup of finite index in $S_{\vp_0}(\widehat{{\SL}_{1}(D)\times\SL_2}).$
\end{lm}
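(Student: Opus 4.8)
The plan is to realize both $S_{\vp}(\widehat{{\SO}^*_{1,1}})$ and $S_{\vp_0}(\widehat{{\SL}_1(D)\times{\SL}_2})$ as subgroups of one common ambient group, ${\PGL}_2(\CC)\times{\PGL}_2(\CC)$, and then to exhibit the first as the kernel of an explicit homomorphism defined on the second whose target is finite; normality of the subgroup and finiteness of the index then follow at once. Recall from Section~\ref{L-groups} the exact sequence
\[
1 \longrightarrow Z \longrightarrow {\SO}_4(\CC)=\widehat{{\SO}^*_{1,1}} \overset{q}{\longrightarrow} {\PGL}_2(\CC)\times{\PGL}_2(\CC) \longrightarrow 1,
\]
where $q=pr^*_{1,1}$ is the adjoint projection and $Z$ is its kernel, namely the center $Z({\SO}_4(\CC))\cong{\mu}_2(\CC)$. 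Since $\Gamma$ acts trivially and ${\PGL}_2(\CC)\times{\PGL}_2(\CC)$ is its own adjoint group, one has $S_{\vp_0}(\widehat{{\SL}_1(D)\times{\SL}_2})=C_{\vp_0}({\PGL}_2(\CC)\times{\PGL}_2(\CC))$, whereas $S_{\vp}(\widehat{{\SO}^*_{1,1}})=C_{\vp}({\SO}_4(\CC))/Z$, which $q$ identifies with $q(C_{\vp}({\SO}_4(\CC)))$; because $\vp_0=q\circ\vp$, this image lies in $C_{\vp_0}({\PGL}_2(\CC)\times{\PGL}_2(\CC))$, which gives the asserted inclusion of subgroups.

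First I would construct the homomorphism. Given $\bar g\in C_{\vp_0}({\PGL}_2(\CC)\times{\PGL}_2(\CC))$, pick any lift $g\in{\SO}_4(\CC)$ and set $c_g(x):=g\,\vp(x)\,g^{-1}\vp(x)^{-1}$ for $x\in\WD_F$; since $q$ sends this expression to $1$, the map $c_g$ has values in $Z$. Using that $Z$ is central in ${\SO}_4(\CC)$, one checks routinely that $c_g$ is independent of the choice of lift, that it is a continuous homomorphism $\WD_F\to Z$, and that $\bar g\mapsto c_g$ defines a group homomorphism $\psi\colon C_{\vp_0}({\PGL}_2(\CC)\times{\PGL}_2(\CC))\to\Hom_{\mathrm{cont}}(\WD_F,Z)$. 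Since ${\SL}_2(\CC)$ is perfect, every element of the target factors through $W_F$, so $\Hom_{\mathrm{cont}}(\WD_F,Z)$ injects into $H^1(W_F,{\mu}_2(\CC))\cong F^{\times}/(F^{\times})^2$, which is finite because $F$ is $p$-adic.

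It then remains to identify the kernel: $c_g$ is trivial exactly when $g$ centralizes $\vp$, so $\ker\psi=q(C_{\vp}({\SO}_4(\CC)))=S_{\vp}(\widehat{{\SO}^*_{1,1}})$ under the identification above. Being a kernel, $S_{\vp}(\widehat{{\SO}^*_{1,1}})$ is normal in $S_{\vp_0}(\widehat{{\SL}_1(D)\times{\SL}_2})$, and its index divides $|F^{\times}/(F^{\times})^2|<\infty$, which is the claim. The cases $\SO_{2,2}$ and $\SO_{4,0}$ go through verbatim, with ${\SL}_1(D)\times{\SL}_2$ replaced by ${\SL}_2\times{\SL}_2$ and ${\SL}_1(D)\times{\SL}_1(D)$ respectively. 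The only step calling for care is the bookkeeping of the several identifications of $\widehat{{\SO}^*_{1,1}}$, its adjoint group, and $\widehat{{\SL}_1(D)\times{\SL}_2}$, together with checking that $\psi$ is well defined; beyond that the argument is purely formal, so I anticipate no genuine obstacle.
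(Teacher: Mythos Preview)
Your proof is correct and is essentially the same argument as the paper's, just packaged slightly differently. The paper writes $C_{\vp_0}(\widehat{{\SL}_1(D)\times{\SL}_2})$ as the image of the disjoint union $\bigsqcup_{\nu\in\Hom(W_F,\{\pm1\})}\{h\in{\SO}_4(\CC):h\vp(w)h^{-1}\vp(w)^{-1}=\nu(w)\}$, observes that the $\nu=\mathbbm{1}$ piece gives $C_{\vp}(\widehat{{\SO}^*_{1,1}})$, and then checks normality by a direct conjugation computation; your map $\psi$ is precisely the assignment $\bar g\mapsto\nu$, so the paper's decomposition into cosets indexed by $\nu$ is the fiber decomposition of your homomorphism, and your observation that a kernel is automatically normal replaces the paper's explicit check.
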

\begin{proof}
The centralizer $C_{\vp_0}(\widehat{{\SL}_{1}(D)\times\SL_2})$ is equal to the image of the disjoint union
\[
\bigsqcup_{\nu \in {\Hom}(W_F, \{\pm 1\})} 
\{ h \in {\SO}_4(\CC) : h \vp(w)h^{-1} \vp(w)^{-1} = \nu(w) \}
\]
via the map $\std^*_{1,1}.$ Further, we note that  
\[
\{ h \in {\SO}_4(\CC) : h \vp(w)h^{-1} \vp(w)^{-1} = 1 \} = C_{\vp}(\widehat{{\SO}^*_{1,1}}) 
\]
and $S_{\vp}(\widehat{{\SO}^*_{1,1}}) = {\std}^*_{1,1} (C_{\vp}(\widehat{{\SO}^*_{1,1}})).$
It is elementary to check that
\[
g \cdot {\std}^*_{1,1} (C_{\vp}(\widehat{{\SO}^*_{1,1}})) \cdot g^{-1} = {\std}^*_{1,1} (C_{\vp}(\widehat{{\SO}^*_{1,1}}))
\]
for any $g \in S_{\vp_0}(\widehat{{\SL}_{1}(D)\times\SL_2}).$
Since ${\Hom}(W_F, \{\pm 1\})$ is finite,
the lemma is proved.
\end{proof}
\begin{lm}  \label{lemma 3 SO4} 
With the notation above, we have the following commutative diagram
\[
\begin{CD}
1 @>>>  \widehat Z_{\vp, \scn}({\SO}^*_{1,1})  @>>> \cS_{\vp, \scn}(\widehat{{\SO}^*_{1,1}}) @>>> \cS_{\varphi}(\widehat{{\SO}^*_{1,1}})  @>>> 1 \\
@. @| @VV{\cap}V @VV{\cap}V @.\\
1 @>>>  \widehat Z_{\vp_0, \scn}({\SL}_{1}(D)\times\SL_2)  @>>> \cS_{\vp_0, \scn}(\widehat{{\SL}_{1}(D)\times\SL_2}) @>>> \cS_{\varphi_0}(\widehat{{\SL}_{1}(D)\times\SL_2})  @>>> 1.
\end{CD} 
\]
\end{lm}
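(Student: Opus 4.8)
The plan is to verify the two squares of the diagram separately, using that the vertical inclusions are induced by the inclusion $S_{\vp}(\widehat{\SO}^*_{1,1}) \hookrightarrow S_{\vp_0}(\widehat{\SL_1(D)\times\SL_2})$ established in Lemma \ref{subgroup lemma SO4}. First I would observe that both centralizer groups $C_{\vp}(\widehat{\SO}^*_{1,1})$ and $C_{\vp_0}(\widehat{\SL_1(D)\times\SL_2})$ live inside $\SO_4(\CC)$, and that $\widehat{\SO}^*_{1,1}=\SO_4(\CC)$ has trivial $\Gamma$-action, so that $Z(\widehat{\SO}^*_{1,1})^{\Gamma}=Z(\widehat{\SO}^*_{1,1})$ is the same finite central $\mu_2$ in both quotients $S_{\vp}=C_{\vp}/Z$ and $S_{\vp_0}=C_{\vp_0}/Z$. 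Hence the inclusion $C_{\vp}(\widehat{\SO}^*_{1,1})\hookrightarrow C_{\vp_0}(\widehat{\SL_1(D)\times\SL_2})$ descends to the inclusion $S_{\vp}\hookrightarrow S_{\vp_0}$. Since $S_{\vp}$ is normal of finite index in $S_{\vp_0}$, Lemma \ref{same connected components} applies and gives $S_{\vp}(\widehat{\SO}^*_{1,1})^{\circ}=S_{\vp_0}(\widehat{\SL_1(D)\times\SL_2})^{\circ}$, together with an induced injection $\pi_0(S_{\vp})\hookrightarrow\pi_0(S_{\vp_0})$, i.e. $\cS_{\vp}(\widehat{\SO}^*_{1,1})\hookrightarrow\cS_{\vp_0}(\widehat{\SL_1(D)\times\SL_2})$. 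This supplies the right-hand vertical inclusion in the diagram and commutativity of the right square (both horizontal maps are the quotients by the respective $\widehat Z$, compatible by construction).

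Next I would handle the simply connected side. Pulling back $S_{\vp}\hookrightarrow S_{\vp_0}$ along the isogeny $\Spin_4(\CC)=(\widehat{\SO}^*_{1,1})_{\scn}\twoheadrightarrow(\widehat{\SO}^*_{1,1})_{\ad}=\PSO_4(\CC)$ — note $(\widehat{\SO}^*_{1,1})_{\scn}=(\widehat{\SL_1(D)\times\SL_2})_{\scn}=\SL_2(\CC)\times\SL_2(\CC)$ and likewise for the adjoint groups, so it is literally the same isogeny in both rows — yields an inclusion of full preimages $S_{\vp,\scn}(\widehat{\SO}^*_{1,1})\hookrightarrow S_{\vp_0,\scn}(\widehat{\SL_1(D)\times\SL_2})$, both containing the common kernel $Z(\widehat G_{\scn})\s\mu_2(\CC)\times\mu_2(\CC)$. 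Applying Lemma \ref{same connected components} once more (the smaller preimage is again normal of finite index in the larger one, as finite index is preserved under preimage) gives that the two identity components coincide and hence that $\cS_{\vp,\scn}(\widehat{\SO}^*_{1,1})=\pi_0(S_{\vp,\scn})\hookrightarrow\pi_0(S_{\vp_0,\scn})=\cS_{\vp_0,\scn}(\widehat{\SL_1(D)\times\SL_2})$. Because $S_{\vp,\scn}^{\circ}=S_{\vp_0,\scn}^{\circ}$, the intersection $Z(\widehat G_{\scn})\cap S_{\vp,\scn}^{\circ}$ equals $Z(\widehat G_{\scn})\cap S_{\vp_0,\scn}^{\circ}$, so the two groups $\widehat Z_{\vp,\scn}(\SO^*_{1,1})$ and $\widehat Z_{\vp_0,\scn}(\SL_1(D)\times\SL_2)$ are literally equal — this is exactly the leftmost vertical equality $\|$ in the diagram.

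Finally I would assemble the diagram: the two rows are the central extensions \eqref{central ext} for $\SO^*_{1,1}$ and for $\SL_1(D)\times\SL_2$ respectively, the left vertical is the identity just shown, the middle and right verticals are the inclusions just constructed, and commutativity of both squares is immediate since every map in sight is either the restriction of a fixed quotient map or a fixed inclusion of subgroups of the same ambient group — there is nothing to check beyond naturality. The main obstacle, such as it is, will be the bookkeeping needed to see that all the ambient groups ($\Spin_4$, $\PSO_4$, the isogeny between them, and the central $\mu_2\times\mu_2$) genuinely coincide for $\SO^*_{1,1}$ and for $\SL_1(D)\times\SL_2$, which is where the explicit $L$-group descriptions from Section \ref{L-groups} are used; once that identification is in place, Lemma \ref{same connected components} does all the real work. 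The cases $\SO_{\dagger}=\SO_{2,2}$ and $\SO_{\dagger}=\SO_{4,0}$ are identical after replacing $\SL_1(D)\times\SL_2$ by $\SL_2\times\SL_2$ and $\SL_1(D)\times\SL_1(D)$ respectively, exactly as in the reduction at the start of Section \ref{pf of 1-1 for SO4}.
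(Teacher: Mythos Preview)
Your proposal is correct and follows essentially the same approach as the paper: invoke Lemma \ref{subgroup lemma SO4} and Lemma \ref{same connected components} to get the inclusion $\cS_{\vp}\hookrightarrow\cS_{\vp_0}$, then pull back through the common isogeny $\Spin_4(\CC)\to\PSO_4(\CC)$ and apply Lemma \ref{same connected components} again to deduce $S_{\vp,\scn}^{\circ}=S_{\vp_0,\scn}^{\circ}$, hence $\widehat Z_{\vp,\scn}=\widehat Z_{\vp_0,\scn}$ and $\cS_{\vp,\scn}\subset\cS_{\vp_0,\scn}$. One small slip in your setup: $C_{\vp_0}(\widehat{\SL_1(D)\times\SL_2})$ lives in $\PSL_2(\CC)\times\PSL_2(\CC)$ (whose center is trivial), not in $\SO_4(\CC)$ with center $\mu_2$ --- but this is harmless, since you correctly rely on Lemma \ref{subgroup lemma SO4} for the inclusion $S_{\vp}\hookrightarrow S_{\vp_0}$ inside the common adjoint group $\PSO_4(\CC)\cong\PSL_2(\CC)\times\PSL_2(\CC)$.
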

\begin{proof}
From Lemmas \ref{same connected components} and \ref{subgroup lemma SO4}, it follows that $\cS_{\varphi}(\widehat{{\SO}^*_{1,1}}) $ is a subgroup of $\cS_{\varphi_0}(\widehat{{\SL}_{1}(D)\times\SL_2}).$
Note that $S_{\vp, \scn}(\widehat{{\SO}^*_{1,1}})$ and $S_{\vp_0, \scn}(\widehat{{\SL}_{1}(D)\times\SL_2})$ are respectively both central extensions of $S_{\vp}(\widehat{{\SO}^*_{1,1}})$ and $S_{\vp_0}(\widehat{{\SL}_{1}(D)\times\SL_2})$ by $\mu_2(\CC) \times \mu_2(\CC).$ 
Using the same arguments,  $S_{\vp, \scn}(\widehat{{\SO}^*_{1,1}})^{\circ}$ and $S_{\vp_0, \scn}(\widehat{{\SL}_{1}(D)\times\SL_2})^\circ$ are identical. 
It follows that $\widehat Z_{\vp, \scn}({\SO}^*_{1,1}) = \widehat Z_{\vp_0, \scn}({\SL}_{1}(D)\times\SL_2)$ and $\cS_{\vp, \scn}(\widehat{{\SO}^*_{1,1}}) \subset \cS_{\vp_0, \scn}(\widehat{{\SL}_{1}(D)\times\SL_2}).$ 
Thus, the proof of the lemma is complete.
\end{proof}
\begin{lm} \label{lemma 1 SO4}
With the  notation above, $I(\ts)$ is a subgroup of $I(\ts_0).$
\end{lm}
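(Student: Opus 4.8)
The plan is to produce the inclusion as an explicit pull-back of characters along the $F$-isogeny that presents $\GSO^*_{1,1}$ as a quotient of $\GL_1(D)\times\GL_2$. Write $p : \GL_1(D)\times\GL_2 \twoheadrightarrow \GSO^*_{1,1}$ for the canonical surjection from Section \ref{othogonal case}, whose kernel is the central torus $\{(z,z^{-1}) : z\in\GL_1\}$; it restricts to a surjection $\SL_1(D)\times\SL_2 \twoheadrightarrow \SO^*_{1,1}$ with kernel $\Delta\mu_2$. By Hilbert's Theorem 90 applied to $\ker p$, the induced map $(\GL_1(D)\times\GL_2)(F)\to\GSO^*_{1,1}(F)$ is surjective, and by the very description of $\ts$ and $\ts_0$ in Section \ref{LLC GSO22} one has $p^{*}\ts = \ttau_1\boxtimes\ttau_2 = \ts_0$. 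The map I would study is $\chi \mapsto \chi\circ p$.

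First I would check that $\chi\circ p$ is a legitimate element of the ambient character group for $I(\ts_0)$. Given $\chi\in I(\ts)\subseteq (\GSO^*_{1,1}(F)/\SO^*_{1,1}(F))^{D}$, the morphism $p$ carries $(\SL_1(D)\times\SL_2)(F)$ into $\SO^*_{1,1}(F)$ --- indeed its image is precisely the subgroup $(\SL_1(D)(F)\times\SL_2(F))/\Delta\mu_2(F)$ occurring in the exact sequences of Section \ref{othogonal case} --- so $\chi\circ p$ is trivial there and defines a character in $((\GL_1(D)\times\GL_2)(F)/(\SL_1(D)\times\SL_2)(F))^{D}$. Next, pulling the isomorphism $\ts\otimes\chi \s \ts$ back along $p$ and using that $p^{*}$ commutes with twisting by a one-dimensional character, one gets $\ts_0\otimes(\chi\circ p) = p^{*}(\ts\otimes\chi) \s p^{*}\ts = \ts_0$, so $\chi\circ p \in I(\ts_0)$. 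Finally $\chi\mapsto\chi\circ p$ is manifestly a group homomorphism, and it is injective because $p$ is surjective on $F$-points; this exhibits $I(\ts)$ as a subgroup of $I(\ts_0)$.

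The one point that needs a little care --- and the nearest thing to an obstacle --- is the interplay between the algebraic groups and their groups of $F$-points: $p$ maps $\SL_1(D)\times\SL_2$ onto $\SO^*_{1,1}$ as algebraic groups, but on $F$-points $p((\SL_1(D)\times\SL_2)(F))$ is in general a proper subgroup of $\SO^*_{1,1}(F)$, the quotient being $F^{\times}/(F^{\times})^{2}$. Hence one cannot invoke surjectivity onto $\SO^*_{1,1}(F)$; it suffices, however, that this image lies inside $\SO^*_{1,1}(F)$, where $\chi$ is already trivial, so the argument goes through. As in the proof of Theorem \ref{1-1 for SO4}, the cases $\SO_{2,2}$ and $\SO_{4,0}$ then follow verbatim after replacing $\GL_1(D)\times\GL_2$ by $\GL_2\times\GL_2$ and by $\GL_1(D)\times\GL_1(D)$, respectively.
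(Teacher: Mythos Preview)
Your proof is correct and takes a genuinely different route from the paper. The paper argues on the $L$-parameter side: it uses that the $L$-packets for $\GSO_\dagger$ and for $\GL_1(D)\times\GL_2$ are singletons together with compatibility of the LLC with character twists to identify $I(\ts)=I(\tvp)$ and $I(\ts_0)=I(\tvp_0)$, and then reads off $I(\tvp)\subset I(\tvp_0)$ from the relation between $\tvp$ and $\tvp_0$. You instead stay entirely on the representation side, realizing $\ts_0$ as $p^{*}\ts$ and transporting the twist along $p$; this is more elementary in that it makes no appeal to the LLC, and your explicit map $\chi\mapsto\chi\circ p$ identifies the embedding concretely as the diagonal $\chi\mapsto(\chi,\chi)$ under $(F^{\times})^{D}\hookrightarrow (F^{\times}\times F^{\times})^{D}$. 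The paper's route has the advantage of being uniform with the later $\SO_6$ argument (where $\tvp$ genuinely decomposes as $\tvp_0\oplus\mu$), while your route makes the group-theoretic mechanism transparent and sidesteps any dependence on properties of the Langlands correspondence. Your remark isolating the failure of surjectivity of $p$ on $F$-points of the derived groups, and why it is harmless here, is exactly the right point to flag.
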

\begin{proof}
Note that $\Pi_{\tvp}(\GSO_{\dagger})$ is a singleton from Section \ref{LLC GSO22} and the LLC for $\GL_n$ and $\GL_m(D)$ is compatible with twisting by characters \cite{ht01, he00, hs11}. So, we have $I(\tvp) = I(\ts).$
Likewise, we have $I(\tvp_0) = I(\ts_0).$
From the fact that $\tvp$ is of the form $\tvp_0 \oplus \mu,$ it then follows that $I(\ts) \subset I(\tvp_0).$
\end{proof}
\begin{rem} \label{useful iso from gk}
From \cite[Theorem 4.3]{gk82}, one can notice that
$\cS_{\varphi_0}(\widehat{{\SL}_{1}(D)\times\SL_2}) \s I(\tvp_0).$
\end{rem}
\begin{lm}  \label{lem 2 SO4}
With the  notation above, we have
\[
\cS_{\vp} \s I(\ts).
\]
\end{lm}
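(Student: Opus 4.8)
The plan is to deduce $\cS_{\vp}(\widehat{\SO^*_{1,1}}) \simeq I(\ts)$ by comparing both sides to the corresponding objects for $\SL_1(D) \times \SL_2$, using the chain of lemmas already in place. First I would invoke Remark \ref{useful iso from gk}, which gives $\cS_{\vp_0}(\widehat{\SL_1(D) \times \SL_2}) \simeq I(\tvp_0) = I(\ts_0)$ (the last equality from Lemma \ref{lemma 1 SO4}'s proof, where $\Pi_{\tvp_0}$ being a singleton forces $I(\tvp_0) = I(\ts_0)$). Next, Lemma \ref{lemma 3 SO4} exhibits $\cS_{\vp}(\widehat{\SO^*_{1,1}})$ as a subgroup of $\cS_{\vp_0}(\widehat{\SL_1(D) \times \SL_2})$, and Lemma \ref{lemma 1 SO4} exhibits $I(\ts)$ as a subgroup of $I(\ts_0)$. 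So it remains to check that the isomorphism of Remark \ref{useful iso from gk} carries the subgroup $\cS_{\vp}(\widehat{\SO^*_{1,1}})$ onto the subgroup $I(\ts)$; this is the crux.

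To pin down that matching, I would make the isomorphism $\cS_{\vp_0} \simeq I(\ts_0)$ explicit in the style of \cite[Theorem 4.3]{gk82}: an element of the component group is represented by $h \in \SO_4(\CC)$ with $h\vp(w)h^{-1}\vp(w)^{-1}$ a scalar (in fact landing in $\mu_2 \times \mu_2$ after passing to $\PSL_2 \times \PSL_2$), and the associated character $\chi$ of $F^\times$ is read off, via local class field theory, from the cocycle $w \mapsto h\vp_0(w)h^{-1}\vp_0(w)^{-1}$. The subgroup $S_{\vp}(\widehat{\SO^*_{1,1}})$ consists exactly of those $h$ for which this cocycle is trivial when lifted back through $\std^*_{1,1}$ — equivalently, recalling the disjoint-union description of the centralizer in the proof of Lemma \ref{subgroup lemma SO4}, it is the $\nu = 1$ piece. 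Under the Gelbart–Knapp identification this corresponds precisely to the characters $\chi$ with $\tvp \chi \simeq \tvp$ in $\widehat{\GSO^*_{1,1}}$, i.e. to $I(\tvp) = I(\ts)$. Chasing through $\std^*_{1,1}$ and the decomposition $\tvp = \tvp_0 \oplus \mu$, the condition $\tvp_0 \chi \simeq \tvp_0$ together with compatibility of determinants is what cuts $I(\ts)$ out of $I(\ts_0)$, matching the cut of $\cS_{\vp}$ out of $\cS_{\vp_0}$.

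Concretely the argument runs: (1) write down the map $\cS_{\vp_0}(\widehat{\SL_1(D)\times\SL_2}) \to I(\ts_0)$ of Remark \ref{useful iso from gk} on representatives; (2) observe both $\cS_{\vp} \hookrightarrow \cS_{\vp_0}$ and $I(\ts) \hookrightarrow I(\ts_0)$ are the subgroups determined by the vanishing of the $\mu$-component of the obstruction cocycle, using Lemma \ref{subgroup lemma SO4}'s description with $\nu = 1$ and the splitting $\tvp = \tvp_0 \oplus \mu$; (3) conclude the restricted bijection $\cS_{\vp}(\widehat{\SO^*_{1,1}}) \simeq I(\ts)$, then transport verbatim to $\SO_{2,2}$ and $\SO_{4,0}$ by replacing $\SL_1(D) \times \SL_2$ with $\SL_2 \times \SL_2$ and $\SL_1(D) \times \SL_1(D)$ respectively, as in the opening of Section \ref{pf of 1-1 for SO4}. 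The main obstacle I anticipate is step (2): making the two subgroup descriptions literally the same requires carefully tracking how the $\mu_2 \times \mu_2 = Z(\Spin_4(\CC))$ ambiguity interacts with $\std^*_{1,1}$ and with the determinant constraint $\det\tvp_1 = \det\tvp_2$, so that the condition "$h$ centralizes $\vp$ on the nose" on the $\SO_4$ side translates exactly to "$\chi$ fixes $\tvp$" on the character side, with no spurious factor of $2$ in the index. Everything else is bookkeeping with the commutative diagrams already assembled.
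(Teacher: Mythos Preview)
Your approach is correct but takes a genuinely different route from the paper. The paper's proof is a two-liner: it invokes Lemma~\ref{lemma of Chaoli} (the Chao--Li exact sequence $\cS_{\tvp}(\widehat{\GSO_\dagger}) \to \cS_{\vp}(\widehat{\SO_\dagger}) \to X(\tvp) \to 1$) and observes that $C_{\tvp}(\widehat{\GSO_\dagger})$ is connected (being a product of $\GL$-type centralizers cut by a torus condition), so $\cS_{\tvp}$ is trivial and $\cS_{\vp} \simeq X(\tvp)$; then Lemma~\ref{the case of singleton} gives $X(\tvp) \simeq I(\tvp) = I(\ts)$. That's the whole argument --- no reference to $\SL_1(D)\times\SL_2$, no subgroup matching, no tracking of $\nu$.

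Your approach instead embeds both sides into the ambient $\SL_1(D)\times\SL_2$ picture and matches subgroups under the Gelbart--Knapp isomorphism of Remark~\ref{useful iso from gk}. This is workable: the calculation you anticipate in step~(2) comes out to ``$\nu=1$ if and only if $\chi_1=\chi_2$'' once you note that each $\chi_i\in I(\tvp_i)$ is quadratic and trace the scalar pair $(\chi_1(w),\chi_2(w))$ through $\std^*_{1,1}$ into $Z(\SO_4(\CC))\simeq\mu_2$. (One caution: the decomposition you write as $\tvp=\tvp_0\oplus\mu$ belongs to the $\SO^*_{3,0}$ setting; here $\tvp=\tvp_1\oplus\tvp_2$ with both factors $2$-dimensional, and the inclusion $I(\ts)\hookrightarrow I(\ts_0)$ is the diagonal $\chi\mapsto(\chi,\chi)$.) What your approach buys is an explicit description of how $\cS_{\vp}\simeq I(\ts)$ sits inside $\cS_{\vp_0}\simeq I(\ts_0)$, which dovetails with the construction of $\Lambda^*_{1,1}$ in Proposition~\ref{pro 4 SO4} by restriction. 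What the paper's approach buys is brevity and a clean structural reason --- connectedness of the upstairs centralizer --- that transfers verbatim to the $\SO_6$ case without any recomputation.
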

\begin{proof}
From \eqref{surjective prs22} and Lemma \ref{the case of singleton}, it is enough to show that $\cS_{\vp} \s X(\tvp).$
This is immediate from Lemma \ref{lemma of Chaoli},
since the centralizer $C_{\tvp}(\widehat{{\GSO}_{\dagger}})$ is connected so that $\cS_{\tvp}({\GSO}_{\dagger})$ is trivial.
\end{proof}
\begin{pro}  \label{pro 4 SO4}
There is a homomorphism
$\Lambda_{\dagger} : \cS_{\vp, \scn}(\widehat{{\SO}^*_{1,1}}) \rightarrow \mcA(\ts)$ 
(unique up to 1-dimensional character of $\cS_{\varphi}(\widehat{{\SO}^*_{1,1}})$) with the following commutative diagram
\begin{equation*} 
\begin{CD}
1 @>>>  \widehat Z_{\vp, \scn}({\SO}^*_{1,1})  @>>> \cS_{\vp, \scn}(\widehat{{\SO}^*_{1,1}}) @>>> \cS_{\varphi}(\widehat{{\SO}^*_{1,1}})  @>>> 1 \\
@. @VV{\zeta_{\dagger}}V @VV{\Lambda^*_{1,1}}V @VV{\s}V @.\\
1 @>>> \CC^\times @>>> \mcA(\ts) @>>> I(\ts) @>>> 1.
\end{CD} 
\end{equation*}
\end{pro}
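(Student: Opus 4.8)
The goal is to build the homomorphism $\Lambda_\dagger = \Lambda^*_{1,1}$ fitting into the stated diagram, and this is exactly the situation already handled for inner forms of $\SL_n$ in \cite[Lemma 12.5]{hs11} — so the strategy is to reduce to that lemma by exploiting the product structure. First I would pass to the ``big'' group $\SL_1(D) \times \SL_2$ and its similitude cover, i.e.\ the group $\widehat{\SL_1(D)\times\SL_2} = \PGL_2(\CC)\times\PGL_2(\CC)$ with its cover $\GL_2(\CC)\times\GL_2(\CC)$. For the parameter $\vp_0$, Remark \ref{rem general HS} tells us that the Hiraga–Saito machinery of Section \ref{for sl(m,D)} applies verbatim to $\SL_{n_1}\times\cdots\times\SL_{n_r}$ and their inner forms, so there is a homomorphism $\Lambda_0 : \cS_{\vp_0,\scn}(\widehat{\SL_1(D)\times\SL_2}) \to \mcA(\ts_0)$ (unique up to a character of $\cS_{\vp_0}$) sitting in the diagram \eqref{a diagram} with $\zeta_{G}$ replaced by the character $\zeta^{(0)}_\dagger$ attached to the inner form $\SL_1(D)\times\SL_2$ — and by the explicit identification $Z((\widehat{\SO_\dagger})_{\scn}) = \mu_2(\CC)\times\mu_2(\CC)$ that character restricts to $\zeta_\dagger$ on the common center.

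\textbf{Key steps.} Second, I would invoke the three inclusions already proved: $\cS_{\vp}(\widehat{\SO^*_{1,1}}) \hookrightarrow \cS_{\vp_0}(\widehat{\SL_1(D)\times\SL_2})$ (Lemmas \ref{subgroup lemma SO4} and \ref{same connected components}), its refinement to $\cS_{\vp,\scn}(\widehat{\SO^*_{1,1}}) \hookrightarrow \cS_{\vp_0,\scn}(\widehat{\SL_1(D)\times\SL_2})$ with equality of the groups $\widehat Z_{\vp,\scn}$ (Lemma \ref{lemma 3 SO4}), and $I(\ts) \hookrightarrow I(\ts_0)$ (Lemma \ref{lemma 1 SO4}). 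Third, I would define $\Lambda^*_{1,1}$ simply as the restriction of $\Lambda_0$ to the subgroup $\cS_{\vp,\scn}(\widehat{\SO^*_{1,1}})$, and check the diagram commutes: on the kernel $\widehat Z_{\vp,\scn}(\SO^*_{1,1}) = \widehat Z_{\vp_0,\scn}(\SL_1(D)\times\SL_2) = \mu_2(\CC)\times\mu_2(\CC)$ it acts by $\zeta^{(0)}_\dagger|_{\mu_2\times\mu_2} = \zeta_\dagger$ into $\CC^\times$; on the quotient it induces the given isomorphism $\cS_\vp(\widehat{\SO^*_{1,1}}) \xrightarrow{\sim} I(\ts)$, which is Lemma \ref{lem 2 SO4}. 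One must also verify that $\Lambda_0$ actually carries $\cS_{\vp,\scn}(\widehat{\SO^*_{1,1}})$ into $\mcA(\ts) \subset \mcA(\ts_0)$ rather than just into $\mcA(\ts_0)$ — this follows because the horizontal image of the restricted map lands in $I(\ts)$ by Lemma \ref{lem 2 SO4}, and $\mcA(\ts)$ is by definition the preimage of $I(\ts)$ in $\mcA(\ts_0)$ under the extension $1 \to \CC^\times \to \mcA(\ts_0) \to I(\ts_0) \to 1$. The uniqueness up to a $1$-dimensional character of $\cS_\vp(\widehat{\SO^*_{1,1}})$ is inherited from the corresponding statement in \cite[Lemma 12.5]{hs11}, since two choices of $\Lambda_0$ differ by a character of $\cS_{\vp_0}$, whose restriction to $\cS_\vp$ is a character of $\cS_\vp$. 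Finally, the cases $\SO_{2,2}$ and $\SO_{4,0}$ are identical after replacing $\SL_1(D)\times\SL_2$ by $\SL_2\times\SL_2$ and $\SL_1(D)\times\SL_1(D)$ respectively, as noted at the start of Section \ref{pf of 1-1 for SO4}.

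\textbf{Main obstacle.} The delicate point is the bookkeeping between the central characters: one must confirm that the character $\zeta^{(0)}_\dagger$ on $\mu_n(\CC)$-type centers produced by the Kottwitz isomorphism for the inner form $\SL_1(D)\times\SL_2$ restricts, under the identification $Z((\widehat{\SO^*_{1,1}})_{\scn}) \hookrightarrow Z(\widehat{(\SL_1(D)\times\SL_2)}_{\scn})$, precisely to the character $\zeta^*_{1,1}$ attached to $\SO^*_{1,1}$ — this is a compatibility of Kottwitz isomorphisms under the isogeny, and it is really where the specific group $\SO^*_{1,1} = (\SL_1(D)\times\SL_2)/\Delta\mu_2$ enters. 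I expect this to reduce to a direct computation with $H^1(F,-)$ and the functoriality of the Kottwitz map along $\Delta\mu_2 \hookrightarrow \mu_2\times\mu_2$, using $H^1(F,\Delta\mu_2) \s F^\times/(F^\times)^2$ and the nontriviality of the division algebra $D$. Everything else is a transcription of Section \ref{for sl(m,D)} through the product and the inclusions established in Lemmas \ref{subgroup lemma SO4}--\ref{lem 2 SO4}.
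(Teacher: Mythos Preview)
Your proposal is correct and follows essentially the same approach as the paper: define $\Lambda^*_{1,1}$ as the restriction of the Hiraga--Saito homomorphism $\Lambda_{\SL_2\times\SL_2}$ (in the paper's notation; your $\Lambda_0$) to the subgroup $\cS_{\vp,\scn}(\widehat{\SO^*_{1,1}})$, then invoke Lemmas \ref{lemma 3 SO4}, \ref{lemma 1 SO4}, \ref{lem 2 SO4} to verify the diagram commutes and the image lands in $\mcA(\ts)$.

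One remark on your flagged ``main obstacle'': it dissolves immediately once you observe that $(\widehat{\SO^*_{1,1}})_{\scn} = \Spin_4(\CC) \cong \SL_2(\CC)\times\SL_2(\CC)$ is \emph{literally} the simply-connected cover $(\widehat{\SL_1(D)\times\SL_2})_{\scn}$, and likewise the adjoint groups $\PSO_4(\CC) \cong \PSL_2(\CC)\times\PSL_2(\CC)$ coincide. Hence $H^1(F,\PSO_4) = H^1(F,\PSL_2\times\PSL_2)$, the Kottwitz isomorphisms agree, and $\zeta^{(0)}_\dagger = \zeta^*_{1,1}$ on the nose --- no isogeny computation is needed. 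The paper does not comment on this point precisely because there is nothing to check.
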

\begin{proof}
Using Hiraga-Saito's homomorphism $\Lambda_{{\SL}_2 \times {\SL}_2}$ in \eqref{a diagram} (cf. Remark \ref{rem general HS}) and the fact that $\cS_{\varphi}(\widehat{{\SO}^*_{1,1}}) $ is a subgroup of $\cS_{\varphi_0}(\widehat{{\SL}_{1}(D)\times\SL_2})$ (see Lemma \ref{lemma 3 SO4}),
we define a map $\Lambda^*_{1,1} :  \cS_{\vp, \scn}(\widehat{{\SO}^*_{1,1}}) \rightarrow \mcA(\ts)$ as the restriction 
\[
\Lambda_{{\SL}_2\times {\SL}_2}|_{\cS_{\vp, \scn}(\widehat{{\SO}^*_{1,1}})}
\]
of $\Lambda_{{\SL}_2\times {\SL}_2}$ to $\cS_{\vp, \scn}(\widehat{{\SO}^*_{1,1}}).$
Due to Lemmas \ref{lemma 3 SO4}, \ref{lemma 1 SO4}, and \ref{lem 2 SO4}, and by the definition of $\Lambda_{\SL_2\times\SL_2}$ in \eqref{a diagram}, $\Lambda^*_{1,1}$ is well-defined.
Since $I(\ts)$ is contained in $I(\Pi)$ and $\cS_{\varphi}(\widehat{{\SO}^*_{1,1}}) $ is a subgroup of $\cS_{\varphi_0}(\widehat{{\SL}_{1}(D)\times\SL_2}),$  it follows that the image $\Lambda^*_{1,1}$ is in $\mcA(\ts).$
Thus, the proof of the proposition is complete. 
\end{proof}

We now finish the Proof of Theorem \ref{1-1 for SO4}. 
Since $\Pi_{\vp}(\SO^*_{1,1})$ is in bijection with $\Irr(\mcA(\ts), \id)$ due to \eqref{useful decomp} and \eqref{bij 333}, and since $\Irr(\mcA(\ts), \id)$ is again in bijection with $\Pi(\cS_{\vp, \scn}(\widehat{{\SO}^*_{1,1}}), \zeta^*_{1,1})$ due to Proposition \ref{pro 4 SO4}, the proof of Theorem \ref{1-1 for SO4} is complete.
\subsection{Properties of $\L$-maps for $\SO_{2,2}$ and its inner forms} \label{properties for so22}
The $\L$-maps defined in Section \ref{const of L-packet for SO22} satisfy the following property.
We continue to use ${\SO}_\dagger$ for $\SO_{2,2},$ $\SO_{4,0},$ and ${\SO}^*_{1,1},$
so that $\L_{\dagger},$ $\sigma_{\dagger},$ and so on will make sense accordingly.
\begin{pro} \label{discreteness for SO22}
A given $\sigma_{\dagger} \in \Pi({\SO}_\dagger)$ is an essentially square-integrable representation if and only if its $L$-parameter $\vp_{\sigma_{\dagger}} := \L_{\dagger}(\sigma_{\dagger})$ does not factor through any proper Levi subgroup of ${\SO}_4(\CC).$
\end{pro}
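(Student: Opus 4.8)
The plan is to transfer the statement first to the similitude group $\GSO_\dagger$ and then onto its two $\GL_2$-type factors, where essential square-integrability is detected by the classical criterion on $L$-parameters, and finally to carry that criterion back across the surjection $\std_\dagger$.

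First I would use Remark~\ref{rem in rest from to}: a representation $\sigma_\dagger \in \Pi(\SO_\dagger)$ is essentially square-integrable if and only if a lift $\ts_\dagger \in \Pi(\GSO_\dagger)$ with $\sigma_\dagger \hookrightarrow \Res^{\GSO_\dagger}_{\SO_\dagger}(\ts_\dagger)$ is. By the description of $\Pi(\GSO_\dagger)$ in Section~\ref{LLC GSO22} I may write $\ts_\dagger = \ttau_1 \boxtimes \ttau_2$ with $\ttau_1 \in \Pi(\GL_1(D))$ (resp. $\Pi(\GL_2)$), $\ttau_2 \in \Pi(\GL_2)$, of equal central character. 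Since $\GSO_\dagger$ modulo its central torus is a direct product of two groups, each of which is $\PGL_2(F)$ or $\GL_1(D)/F^\times$, a routine argument on products shows $\ts_\dagger$ is essentially square-integrable if and only if both $\ttau_1$ and $\ttau_2$ are; for a $\GL_1(D)$-factor this is automatic, $\GL_1(D)$ being compact modulo its center.

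Next I would bring in the LLC for $\GL_2$ and for $\GL_1(D)$ from Section~\ref{LLC GSO22}, together with its compatibility with parabolic induction: $\ttau_i$ is essentially square-integrable if and only if its parameter $\tvp_i$ does not factor through a proper Levi subgroup of $\GL_2(\CC)$ (again automatic for a $\GL_1(D)$-factor, by the definition of $\Phi(\GL_1(D))$ as the $2$-dimensional discrete Weil-Deligne parameters). I would then list the proper Levi subgroups of $\widehat{\GSO_\dagger} = (\GL_2(\CC) \times \GL_2(\CC))^\circ$ --- up to conjugacy these are $(\GL_2 \times T)^\circ$, $(T \times \GL_2)^\circ$, and the maximal torus, $T$ the diagonal torus of $\GL_2$ --- and conclude that $\tvp = \tvp_1 \oplus \tvp_2 \in \Phi(\GSO_\dagger)$ factors through a proper Levi of $\widehat{\GSO_\dagger}$ if and only if $\tvp_1$ or $\tvp_2$ factors through a proper Levi of $\GL_2(\CC)$. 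Combining with the previous paragraph, $\ts_\dagger$ is essentially square-integrable if and only if $\tvp$ does not factor through any proper Levi of $\widehat{\GSO_\dagger}$.

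Finally I would transfer this across $\std_\dagger \colon \widehat{\GSO_\dagger} \to \widehat{\SO_\dagger} = \SO_4(\CC)$, which by~\eqref{surjective prs22} is surjective with kernel the central torus $\CC^\times$: quotient by a central torus gives a bijection between parabolic, hence Levi, subgroups, so since $\vp_{\sigma_\dagger} = \std_\dagger \circ \tvp$, the parameter $\tvp$ factors through a proper Levi of $\widehat{\GSO_\dagger}$ if and only if $\vp_{\sigma_\dagger}$ factors through a proper Levi of $\SO_4(\CC)$ (the image, resp. the preimage, of the given Levi). Assembling the steps proves the claim. I expect the only delicate point to be the Levi bookkeeping across $\std_\dagger$: matching the two conjugacy classes of $\GL_2$-type maximal Levi subgroups of $\SO_4(\CC)$ (coming from the two rulings of the split quadratic space) with the images of $(\GL_2 \times T)^\circ$ and $(T \times \GL_2)^\circ$, and verifying that ``factoring through a proper Levi'' is carried correctly in both directions --- all of which is elementary given the explicit dual groups of Section~\ref{L-groups}.
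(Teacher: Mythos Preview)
Your proof is correct and follows essentially the same strategy as the paper: lift to $\GSO_\dagger$ via Remark~\ref{rem in rest from to}, invoke the LLC for $\GL_2$ and $\GL_1(D)$ to relate essential square-integrability of $\ts_\dagger$ to the Levi condition on $\tvp$, and then pass across $\std_\dagger$. The paper compresses your steps~2--4 into a single citation of \cite{ht01, he00, scholze13, hs11} and leaves the Levi correspondence under $\std_\dagger$ implicit, whereas you make the product decomposition, the list of proper Levis of $\widehat{\GSO_\dagger}$, and the bijection of Levis under the central-torus quotient explicit; but the logical chain is the same.
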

\begin{proof}
By the definition of $\L_{\dagger}$ in Section \ref{const of L-packet for SO22}, $\sigma_{\dagger}$ is an irreducible constituent of the restriction $\ts_{\dagger}|_{{\SO}_\dagger}$ for some $\ts_{\dagger} \in \Pi({\GSO}_\dagger).$
From Remark \ref{rem in rest from to} and \cite{ht01, he00, scholze13, hs11}, $\sigma_{\dagger}$ is an essentially square-integrable representation if and only if $\ts_{\dagger}$ is if and only if $\tvp_{\sigma_{\dagger}}:= L_{\dagger}(\ts_{\dagger})$ does not factor through any proper Levi subgroup of ${\GSO}_\dagger(\CC)$ if and only if $\vp_{\sigma_{\dagger}}$ does not.
\end{proof}
\begin{rem}
In the same way with the proof of Proposition \ref{discreteness for SO22}, we have that a given $\sigma_{\dagger} \in \Pi({\SO}_\dagger)$ is tempered if and only if the image of its $L$-parameter $\vp_{\sigma_{\dagger}} := \L_{\dagger}(\sigma_{\dagger})$ in ${\SO}_4(\CC)$ is bounded. 
\end{rem}
\section{LLC for $\SO_{3,3}$ and its inner forms} \label{LLC so33 and its inner forms}  
Following the idea in Section \ref{LLC so22 and its inner forms}, we present the LLC for $\GSO_{3,3}$ and its $F$-inner forms and establish the LLC for $\SO_{3,3}$ and its $F$-inner forms.
\subsection{The cases of $\GSO_{3,3}$ and its inner forms} \label{LLC GSO33}
From the isomorphism in Section \ref{othogonal case}
\[
{\GSO}_{3,3}(F) \s
 ({\GL}_4(F) \times F^{\times}) / \{ (z, z^{-2}) : z \in F^{\times}   \},
\]
one can notice that any irreducible admissible representation of ${\GSO}_{3,3}(F)$ has the form $\Pi \boxtimes \mu$ for $\Pi \in \Pi(GL_4)$ and $\mu \in \Pi(GL_1)$ with $\omega_{\Pi}=\mu^2$ (cf. \cite[Section 1]{gtrep}).
This implies that
\[
\Pi({\GSO}_{3,3})=
\{
\Pi \boxtimes \mu : \Pi \in \Pi({\GL}_4), \mu \in \Pi({\GL}_1) ~~ \text{with}~~ \omega_{\Pi}=\mu^2
\}.
\]
Further, due to the form of $L$-group $\widehat{{\GSO}_{3,3}}$ in Section \ref{othogonal case}, we note that
\begin{equation} \label{desc of L-para for GSO33}
\Phi({\GSO}_{3,3})=
\{
\tvp_0 \oplus \mu : \tvp_0 \in \Phi({\GL}_4), \mu \in \Phi({\GL}_1) ~~\text{with}~~ \det \tvp_0=\mu^2
\}.
\end{equation}
Thus, by the LLC for $\GL_n$ \cite{ht01, he00, scholze13}, there is a surjective, one-to-one map 
\[
L_{3,3} : {\Pi}({\GSO}_{3,3}) \longrightarrow \Phi({\GSO}_{3,3}).
\]
For a non quasi-split $F$-inner form  ${\GSO}^*_{3,0}$ of ${\GSO}_{3,3},$ we again recall the isomorphism in Section \ref{othogonal case}
\[
 {\GSO}^*_{3,0}(F) \s
 ({\GL}_1(D_4) \times F^{\times})  / \{ (z, z^{-2}) : z \in F^{\times}  \}.
\]
Similarly, we have
\[
\Pi({\GSO}^*_{3,0}) =
\{
\Pi \boxtimes \mu : \Pi \in \Pi({\GL}_1(D_4)), \mu \in \Pi({\GL}_1) ~~\text{with}~~ \omega_{\Pi}=\mu^2.
\}.
\]
Further, due to the form of $L$-groups in Section \ref{othogonal case}, we note that
\begin{equation*} 
\Phi({\GSO}^*_{3,0}) =
\{
\tvp_0 \oplus \mu : \tvp_0 \in \Phi({\GL}_1(D_4)), \mu \in \Phi({\GL}_1) ~~\text{with}~~ \det \tvp_0=\mu^2
\}.
\end{equation*}
Thus, by the LLC for $\GL_m(D)$ \cite{hs11}, there is a surjective, one-to-one maps 
\[
L^*_{3,0} : {\Pi}({\GSO}^*_{3,0}) \longrightarrow \Phi({\GSO}^*_{3,0}).
\]
Since two maps $L_{3,3}$ and $L^*_{3,0}$ are one-to-one, each fiber gives rise to $L$-packets for $\GSO_{3,3}$ and $\GSO^*_{3,0},$ which are all singletons.
Likewise, we define a surjective, one-to-one map
\[
L_{V_D} : {\Pi}({\GSO}_{V_D}) \longrightarrow \Phi({\GSO}_{V_D})
\]
and construct $L$-packets for ${\GSO}_{V_D}.$
For simplicity of notation, we write ${\GSO}_\flat$ for ${\GSO}_{3,3},$ ${\GSO}^*_{3,0},$ and ${\GSO}_{V_D}.$ 
Further, recalling the notation in Section \ref{conj str of L-packets}, we note that:
\begin{align*}
S_{\tvp}(\widehat{{\GSO}_\flat}) & \subset (\widehat{{\GSO}_\flat})_{\ad} \s {\PSO}_6(\CC)  \s {\PSL}_4(\CC), \\
S_{\tvp, \scn}(\widehat{{\GSO}_\flat}) & \subset (\widehat{{\GSO}_\flat})_{\scn} \s  {\Spin}_6(\CC) \s {\SL}_4(\CC).
\end{align*}
\subsection{Construction of $L$-packets for $\SO_{3,3}$ and its inner forms} \label{const of L-packet for SO33}
Given $\sigma \in \Pi({\SO}_{3,3}),$ there is a lifting $\ts \in \Pi({\GSO}_{3,3})$ such that 
\[
\sigma \hookrightarrow {\Res}_{{\SO}_{3,3}}^{{\GSO}_{3,3}}(\ts).
\]
We define a map 
\begin{equation*} 
{\L}_{3,3} : {\Pi}({\SO}_{3,3}) \longrightarrow \Phi({\SO}_{3,3})
\end{equation*}
by $\L_{3,3}(\sigma) := \std_{3,3}(L(\ts)).$ 
Note that $\L_{3,3}$ is not depending on the choice of the lifting $\ts,$ since another lifting must be of the form $\ts \otimes \chi$ for some quasi-character $\chi$ of $F^{\times}$ by Proposition \ref{pro for lifting} and $L_{3,3}(\ts \otimes \chi) = L_{3,3}(\ts) \otimes \chi$ for any quasi-character $\chi$ of $F^{\times}$  \cite{ht01, he00}.
Thus, the map $\L_{3,3}$ is well-defined. 

Furthermore, $\L_{3,3}$ is a surjective, 
since any $\vp \in \Phi({\SO}_{3,3})$ can be lifted to some $\tvp \in \Phi({\GSO}_{3,3})$ by \eqref{surjective prs30} and Theorem \ref{thm by Labesse}. 
For each $\vp \in \Phi({\SO}_{3,3}),$ the fiber is given by
\begin{equation*} 
\Pi_{\vp}({\SO}_{3,3}) = \Pi_{\ts}({\SO}_{3,3}),
\end{equation*}
where $\ts$ is the unique member in $\Pi_{\tvp}({\GSO}_{3,3})$ and $\tvp$ lies in $\Phi({\GSO}_{3,3})$ such that $\std_{3,3} \circ \tvp=\vp.$ 
Due to \cite{ht01, he00} and Proposition \ref{pro for lifting}, the fiber does not depend on the choice of $\tvp.$
This forms an $L$-packet for ${\SO}_{3,3}.$

Similarly, given $\sigma^*_{3,0} \in \Pi({\SO}^*_{3,0}),$ there is a lifting $\ts^*_{3,0} \in \Pi({\GSO}^*_{3,0})$ such that 
\[
\sigma^*_{3,0} \hookrightarrow {\Res}_{{\SO}^*_{3,0}}^{{\GSO}^*_{3,0}}(\ts^*_{3,0}).
\]
We define a map  
\begin{equation*} 
{\L}^*_{3,0} : {\Pi}({\SO}^*_{3,0}) \longrightarrow \Phi({\SO}^*_{3,0})
\end{equation*}
by $\L^*_{3,0}(\sigma^*_{3,0}) := \std^*_{3,0}(L(\ts^*_{3,0})).$
In the same way with $\L_{3,3},$ $\L^*_{3,0}$ turns out to be a well-defined, surjective, and finite-to-one map.
For each $\vp \in \Phi({\SO}^*_{3,0}),$ the $L$-packet is given by
\begin{equation*} 
\Pi_{\vp}({\SO}^*_{3,0}) = \Pi_{\ts^*_{3,0}}({\SO}^*_{3,0}).
\end{equation*}
Due to \cite{hs11} and Proposition \ref{pro for lifting}, each $L$-packet does not depend on the choice of $\tvp.$

Likewise, we define a surjective, finite-to-one map  
\begin{equation*} 
{\L}_{V_D} : {\Pi}({\SO}(V_D) \longrightarrow \Phi({\SO}(V_D))
\end{equation*}
by $\L_{V_D}(\sigma_{V_D}) := \std_{V_D}(L(\ts_{V_D}))$  
For each $\vp \in \Phi({\SO}(V_D)),$ the $L$-packet is given by
\begin{equation*} 
\Pi_{\vp}({\SO}(V_D)) = \Pi_{\ts_{V_D}}({\SO}(V_D)).
\end{equation*}
As in Section \ref{const of L-packet for SO22}, each $L$-packet does not depend on the choice of $\tvp.$
\subsection{Internal structure of $L$-packets for $\SO_{3,3}$ and its inner forms} \label{para for SO33}
We continue with the notation in Section \ref{conj str of L-packets}. 
For simplicity of notation, we shall write ${\SO}_\flat$ for 
${\SO}_{3,3},$ ${\SO}^*_{3,0},$ and $\SO(V_D).$ 
Recall from Section \ref{L-groups} that
\[
\widehat{{\SO}_\flat} \s {\SO}_6(\CC) \s {\SL}_4(\CC)/\mu_2(\CC). 
\]
Note that 
\[
(\widehat{{\SO}_\flat})_{\ad} ={\PSO}_6(\CC), ~~~ 
(\widehat{{\SO}_\flat})_{\scn} ={\Spin}_6(\CC), ~~~ 
Z((\widehat{{\SO}_\flat})_{\scn}) = Z((\widehat{{\SO}_\flat})_{\scn})^{\Gamma} \s \mu_4(\CC).
\]
Let $\vp \in \Phi({\SO}_\flat)$ be given.
We fix a lifting $\tvp \in \Phi({\GSO}_\flat)$ via the surjective map $\widehat{{\GSO}_\flat}  \twoheadrightarrow \widehat{{\SO}_\flat}$ (see Theorem \ref{thm by Labesse}). 
We note that:
\begin{align*}
S_{\vp}(\widehat{{\SO}_\flat}) &\subset {\PSO}_6(\CC) \s {\PSL}_4(\CC), \\
S_{\vp, \scn}(\widehat{{\SO}_\flat}) &\subset {\Spin}_6(\CC) \s {\SL}_4(\CC).
\end{align*}
One can then have a central extension 
\begin{equation} 
1 \longrightarrow \widehat Z_{\vp, \scn}({\SO}_\flat)  \longrightarrow \cS_{\vp, \scn}(\widehat{{\SO}_\flat}) \longrightarrow \cS_{\vp}(\widehat{{\SO}_\flat}) \longrightarrow 1.
\end{equation}
Let $\zeta_{3,3},$ $\zeta^*_{3,0},$ and $\zeta_{V_D}$ be characters on $Z((\widehat{{\SO}_\flat})_{\scn})$ which correspond to $\SO_{3,3},$ $\SO^*_{3,0},$ and $\SO(V_D)$ via the Kottwitz isomorphism \cite[Theorem 1.2]{kot86}. 
Note that the inverse $(\zeta^*_{3,0})^{-1}$ corresponds to another form $\SL_1(D_4^{op})/\mu_2,$ which is isomorphic to $\SO^*_{3,0},$ via the canonical isomorphism between two multiplicative groups $D_4^{\times}$ and $(D_4^{op})^{\times}$ (see Section \ref{othogonal case}). 

\begin{thm} \label{1-1 for SO6}
Given an $L$-parameter $\vp \in \Phi(\SO_{\flat}),$ we fix a lifting $\tvp \in \Phi(\GSO_{\flat})$ of $\vp.$ 
Let $\ts$ be the unique member in $\Pi_{\tvp}(\GSO_{\flat})$ via the LLC for $\GSO_{\flat}$ in Section \ref{LLC GSO33}.
Then, there is a one-one bijection 
\[
\Pi_{\vp}({\SO}_{\flat}) \overset{1-1}{\longleftrightarrow} \Pi(\cS_{\vp, \scn}(\widehat{{\SO}_{\flat}}), \zeta_{\flat}),
\]
sending $\sigma \mapsto \rho_{\sigma},$ 
such that we have  an isomorphism
\[
V_{\ts} ~ ~ \s \bigoplus_{\sigma \in \Pi_{\vp}({\SO}_{\flat})} \rho_{\sigma} \boxtimes  \si
\]
as representations of $\cS_{\vp, \scn}(\widehat{{\SO}_{\flat}}) \rtimes {\SO}_{\flat}(F),$
where the character $\zeta_{\flat}$ runs through $\zeta_{3,3},$ $\zeta_{V_D},$ $\zeta^*_{3,0},$ according to ${\SO}_{\flat}.$
\end{thm}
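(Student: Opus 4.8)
The plan is to mirror the strategy already developed for Theorem \ref{1-1 for SO4} in Section \ref{pf of 1-1 for SO4}, but now with the relevant intermediate group being a single $\SL_4$-type factor instead of a product of two $\SL_2$-type factors. The key point is that $\SO_{\flat}$ sits between $\SL_4(\CC)/\mu_2(\CC)$-related groups, and the appropriate ``overgroup'' at the level of dual groups is $\PGL_4(\CC)$, the adjoint group of $\GL_4(\CC)$. First I would fix $\vp \in \Phi(\SO_{\flat})$, lift it to $\tvp \in \Phi(\GSO_{\flat})$ via \eqref{surjective prs30} and Theorem \ref{thm by Labesse}, write $\tvp = \tvp_0 \oplus \mu$ with $\tvp_0 \in \Phi(\GL_4)$ (or $\Phi(\GL_1(D_4))$, $\Phi(\GL_2(D))$) and $\det\tvp_0 = \mu^2$ as in \eqref{desc of L-para for GSO33}, and then push $\tvp$ through $\std^*_{3,0}$ followed by the projection $\SO_6(\CC) \cong \SL_4(\CC)/\mu_2(\CC) \twoheadrightarrow \PGL_4(\CC)$ to obtain $\vp_0 \in \Phi(\SL_4)$ (resp.\ the relevant inner form), with $\vp_0 = pr \circ \tvp_0$ for the projection $pr: \GL_4(\CC) \twoheadrightarrow \PGL_4(\CC)$. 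Correspondingly $\ts = \Pi \boxtimes \mu \in \Pi_{\tvp}(\GSO_{\flat})$ restricts to $\Pi|_{\SL_4}$-type data, so $\Pi_{\vp}(\SO_{\flat}) = \Pi_{\ts}(\SO_{\flat})$ is governed by the restriction of $\Pi$.

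The main body of the argument consists of the string of lemmas analogous to Lemmas \ref{subgroup lemma SO4}--\ref{lem 2 SO4}. Specifically: (1) $S_{\vp}(\widehat{\SO_{\flat}})$ is a normal finite-index subgroup of $S_{\vp_0}(\widehat{\SL_4})$, proved exactly as in Lemma \ref{subgroup lemma SO4} by expressing $C_{\vp_0}$ as the image under $\std^*_{3,0}$ of a disjoint union over $\Hom(W_F,\{\pm 1\})$ of ``$\vp$-commutes up to sign'' loci, and checking conjugation-invariance; (2) via Lemma \ref{same connected components} this yields the commutative diagram of central extensions comparing $\cS_{\vp,\scn}(\widehat{\SO_{\flat}})$ to $\cS_{\vp_0,\scn}(\widehat{\SL_4})$ with $\widehat Z_{\vp,\scn}(\SO_{\flat}) = \widehat Z_{\vp_0,\scn}(\SL_4)$, since the identity components of the simply-connected centralizers agree (here $Z((\widehat{\SO_{\flat}})_{\scn}) \cong \mu_4(\CC)$, matching $Z(\widehat{\SL_4}_{,\scn}) = \mu_4(\CC)$, which is why the central-character data line up); (3) $I(\ts) \subset I(\ts_0)$ because $\tvp = \tvp_0 \oplus \mu$ and the LLC for $\GL_n$ and $\GL_m(D)$ is compatible with twisting; (4) $\cS_{\vp}(\widehat{\SO_{\flat}}) \cong I(\ts)$ via \eqref{surjective prs30}, Lemma \ref{the case of singleton}, and Lemma \ref{lemma of Chaoli}, using that $C_{\tvp}(\widehat{\GSO_{\flat}})$ is connected so $\cS_{\tvp}$ is trivial. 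Then, as in Proposition \ref{pro 4 SO4}, I would define $\Lambda^*_{3,0}$ (resp.\ $\Lambda_{3,3}$, $\Lambda_{V_D}$) as the restriction to $\cS_{\vp,\scn}(\widehat{\SO_{\flat}})$ of the Hiraga--Saito homomorphism $\Lambda_{\SL_4}$ from Section \ref{for sl(m,D)} (available by Remark \ref{rem general HS}), obtaining the commutative diagram relating the central extension $1 \to \widehat Z_{\vp,\scn}(\SO_{\flat}) \to \cS_{\vp,\scn}(\widehat{\SO_{\flat}}) \to \cS_{\vp}(\widehat{\SO_{\flat}}) \to 1$ with $1 \to \CC^\times \to \mcA(\ts) \to I(\ts) \to 1$, the left vertical map being $\zeta_{\flat}$. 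Finally, combining \eqref{useful decomp}, \eqref{bij 333}, and this diagram gives the asserted bijection $\Pi_{\vp}(\SO_{\flat}) \leftrightarrow \Pi(\cS_{\vp,\scn}(\widehat{\SO_{\flat}}),\zeta_{\flat})$ together with the $\mcA(\ts) \rtimes \SO_{\flat}(F)$-isomorphism $V_{\ts} \cong \bigoplus_{\sigma} \rho_{\sigma} \boxtimes \sigma$.

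The one point that requires genuine care — and which I expect to be the main obstacle — is verifying that $\cS_{\vp}(\widehat{\SO_{\flat}})$ embeds into $\cS_{\vp_0}(\widehat{\SL_4})$ in a way that is compatible with the Hiraga--Saito homomorphism $\Lambda_{\SL_4}$, i.e.\ that the restriction of $\Lambda_{\SL_4}$ actually lands in $\mcA(\ts) \subseteq \mcA(\ts_0)$ and respects the central characters. This amounts to pinning down the precise relationship between $S_{\vp,\scn}(\widehat{\SO_{\flat}})$ inside $\Spin_6(\CC)$ and $S_{\vp_0,\scn}(\widehat{\SL_4})$ inside $\SL_4(\CC)$, using the exceptional isomorphism $\Spin_6(\CC) \cong \SL_4(\CC)$ and keeping track of how $\mu_2(\CC) \subset \mu_4(\CC)$ acts; the center being $\mu_4$ rather than $\mu_2 \times \mu_2$ means the possible non-abelian central extensions (and hence the dimensions of the $\rho_{\sigma}$, equivalently the multiplicities $m$) are controlled differently than in the $\SO_4$ case, and this distinction must be handled cleanly. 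Once the identifications of Lemmas analogous to \ref{lemma 3 SO4} and \ref{lem 2 SO4} are in place, the remainder is a formal transport of Hiraga--Saito's argument, exactly as in Section \ref{pf of 1-1 for SO4}.
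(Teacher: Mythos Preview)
Your proposal is correct and follows essentially the same approach as the paper's proof: the paper also reduces to the case $\SO_{\flat}=\SO^*_{3,0}$ (noting the other cases follow by replacing $\SL_1(D_4)$ with $\SL_4$ or $\SL_2(D)$), proves the exact sequence of lemmas you outline (Lemmas \ref{subgroup lemma SO6}, \ref{lemma 3 SO6}, \ref{lemma 1 SO6}, \ref{lem 2 SO6}), and then defines $\Lambda^*_{3,0}$ as the restriction of Hiraga--Saito's $\Lambda_{\SL_4}$ in Proposition \ref{pro 4 SO6}. The compatibility issue you flag as the ``main obstacle'' is handled in the paper without additional difficulty: once Lemma \ref{lemma 3 SO6} supplies the inclusion $\cS_{\vp,\scn}(\widehat{\SO^*_{3,0}}) \subset \cS_{\vp_0,\scn}(\widehat{\SL_1(D_4)})$ (via the exceptional isomorphism $\Spin_6(\CC)\cong\SL_4(\CC)$, so the $\mu_4(\CC)$ centers literally coincide) and Lemma \ref{lemma 1 SO6} gives $I(\ts)\subset I(\Pi)$, the restriction of $\Lambda_{\SL_4}$ automatically lands in $\mcA(\ts)$, so no extra argument is needed.
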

\begin{rem}
Given $\vp \in \Phi(\SO_{3,3}),$ by Theorem \ref{1-1 for SO6}, we have a one-to-one correspondence
\[
\Pi_{\vp}({\SO}_{3,3}) \cup \Pi_{\vp}({\SO}(V_D)) \cup \Pi_{\vp}^{1/4}({\SO}^*_{3,0}) \cup \Pi_{\vp}^{3/4}({\SO}^*_{3,0}) \overset{1-1}{\longleftrightarrow} {\Irr}(\cS_{\vp, \scn}(\widehat{{\SO}_{3,3}})).
\]
Just only for here, we distinguish $\Pi_{\vp}^{1/4}({\SO^*_{3,0}})$ and $\Pi_{\vp}^{3/4}({\SO^*_{3,0}})$ in the sense that 
$\Pi_{\vp}^{1/4}({\SO^*_{3,0}})$ denotes the $L$-packet for ${\SO^*_{3,0}}$ with $D_4$ and $\Pi_{\vp}^{3/4}({\SO^*_{3,0}})$ denotes the $L$-packet for ${\SO^*_{3,0}}$ with $D_4^{op}$
(see Section \ref{othogonal case}).
\end{rem}
\subsection{Proof of Theorem \ref{1-1 for SO6}} \label{pf of 1-1 for SO6}
We follow the idea in \cite[Lemma 12.6]{hs11}.
We first deal with the case of $\SO_{\flat} = \SO^*_{3,0}.$ 
Then, the proofs for the other two cases of ${\SO}_{3,3}$ and ${\SO}(V_D)$ are the same after replacing $\SL_1(D_4)$ by $\SL_4$ and $\SL_2(D),$ respectively. 

Let an $L$-parameter $\vp \in \Phi(\SO^*_{3,0})$ be given.
As described in Section \ref{const of L-packet for SO33}, 
there is an $L$-parameter $\tvp \in \Phi(\GSO^*_{3,0})$ such that 
$\std^*_{3,0} \circ \tvp=\vp.$
The description \eqref{desc of L-para for GSO33} implies that
$\tvp$ is of the form $\tvp_0 \oplus \mu,$ 
where 
$\tvp_0 \in \Phi({\GL}_1(D_4))$ and $\mu \in \Phi({\GL}_1).$

Now, we denote by $\vp_0$ the image in $\PSL_4(\CC)$ of $\tvp$ via the composite of maps
\[
\widehat{{\GSO}^*_{3,0}} 
\overset{\std^*_{3,0}}{\longrightarrow} \widehat{{\SO}^*_{3,0}} = {\SO}_6(\CC) \s SL_4(\CC)/\mu_2(\CC)
\overset{pr^*_{3,0}}{\longrightarrow}
{\PSL}_4(\CC).
\]  
It then follows that $\vp_0 \in \Phi(\SL_1(D_4)).$
Note that $\vp_0 = pr \circ \tvp_0,$ where $pr: \GL_4(\CC) \twoheadrightarrow \PSL_4(\CC)$ is the usual projection map.

Due to Section \ref{const of L-packet for SO33}, we have $\sigma \in \Pi_{\vp}({\SO}^*_{3,0})$ and $\ts \in \Pi_{\tvp}({\GSO}^*_{3,0}).$ 
Note from Section \ref{LLC GSO33}  that 
$\ts$ is of the form $\Pi \boxtimes \mu$ with $\omega_{\Pi}=\mu^2,$ where $\Pi \in \Pi({\GL}_1(D_4))$ corresponding to $\tvp_0$ via the LLC for $GL_1(D_4)$ \cite{hs11} and $\mu \in \Pi({\GL}_1).$  
\begin{lm} \label{subgroup lemma SO6}
With the notation above, $S_{\vp}(\widehat{{\SO}^*_{3,0}})$ is a normal subgroup of finite index in $S_{\vp_0}(\widehat{{\SL}_{1}(D_4)}).$
\end{lm}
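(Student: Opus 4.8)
The plan is to repeat, with only cosmetic changes, the argument already used for Lemma \ref{subgroup lemma SO4} in the $\SO^*_{1,1}$ case: the ambient dual group $\SO_4(\CC)$ is replaced by $\SO_6(\CC)\s\SL_4(\CC)/\mu_2(\CC)$, the projection $pr^*_{1,1}$ by $pr^*_{3,0}$, and the parameter $\vp_0$ is now the image of $\vp\colon\WD_F\to\SO_6(\CC)$ under $pr^*_{3,0}\colon\SO_6(\CC)\twoheadrightarrow\PSO_6(\CC)\s\widehat{{\SL}_1(D_4)}$. Everything is formally parallel to that earlier proof, so I would state it briefly.

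First I would describe the centralizer $C_{\vp_0}(\widehat{{\SL}_1(D_4)})$ explicitly. An element $\bar h\in\PSO_6(\CC)$ lies in $C_{\vp_0}$ exactly when a lift $h\in\SO_6(\CC)$ satisfies $h\vp(w)h^{-1}\vp(w)^{-1}\in\ker pr^*_{3,0}=Z(\SO_6(\CC))=\{\pm1\}$ for all $w$, and then $\nu_h(w):=h\vp(w)h^{-1}\vp(w)^{-1}$ is a continuous homomorphism $W_F\to\{\pm1\}$ (it is automatically trivial on the $\SL_2(\CC)$-factor). Hence, exactly as in Lemma \ref{subgroup lemma SO4},
\[
C_{\vp_0}(\widehat{{\SL}_1(D_4)})=pr^*_{3,0}\Bigl(\ \bigsqcup_{\nu\in\Hom(W_F,\{\pm1\})}\{\,h\in\SO_6(\CC):h\vp(w)h^{-1}\vp(w)^{-1}=\nu(w)\,\}\ \Bigr).
\]
The $\nu=1$ piece is $C_\vp(\widehat{{\SO}^*_{3,0}})$, and its image under $pr^*_{3,0}$ is $S_\vp(\widehat{{\SO}^*_{3,0}})$ (the $\Gamma$-action on $\SO_6(\CC)$ being trivial, so $Z(\SO_6(\CC))^\Gamma=\{\pm1\}=\ker pr^*_{3,0}$); in particular $S_\vp(\widehat{{\SO}^*_{3,0}})\subseteq S_{\vp_0}(\widehat{{\SL}_1(D_4)})$. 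For normality, given $\bar g\in S_{\vp_0}(\widehat{{\SL}_1(D_4)})$ with lift $g$ and $g\vp(w)g^{-1}\vp(w)^{-1}=\nu_g(w)\in\{\pm1\}$, a one-line computation gives, for $h\in C_\vp(\widehat{{\SO}^*_{3,0}})$,
\[
(ghg^{-1})\vp(w)(ghg^{-1})^{-1}\vp(w)^{-1}=\nu_g(w)^{-1}\,g\vp(w)g^{-1}\vp(w)^{-1}=1,
\]
the central value cancelling; so $g\,C_\vp(\widehat{{\SO}^*_{3,0}})\,g^{-1}=C_\vp(\widehat{{\SO}^*_{3,0}})$, and applying $pr^*_{3,0}$ gives $\bar g\,S_\vp\,\bar g^{-1}=S_\vp$. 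Finally $\bar g\mapsto\nu_g$ is a well-defined homomorphism $S_{\vp_0}(\widehat{{\SL}_1(D_4)})\to\Hom(W_F,\{\pm1\})$ with kernel $S_\vp(\widehat{{\SO}^*_{3,0}})$, and $\Hom(W_F,\{\pm1\})\s(F^{\times}/(F^{\times})^2)^D$ is finite (as already invoked in Lemma \ref{subgroup lemma SO4}), so the index is finite.

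I do not expect a genuine obstacle here. The only point needing a little care — and the one place the $\SO_6$ case differs in flavour from the $\SO_4$ case — is bookkeeping the relevant center: the computations happen inside $\SO_6(\CC)$, whose center is $\mu_2(\CC)=\{\pm1\}$ (not $\mu_4(\CC)$, the center of the simply connected cover $\Spin_6(\CC)$), and one must use triviality of the $\Gamma$-action to identify $Z(\SO_6(\CC))^\Gamma$ with all of $\{\pm1\}=\ker pr^*_{3,0}$. Granting this, the proof is essentially identical to that of Lemma \ref{subgroup lemma SO4}.
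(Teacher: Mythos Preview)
Your proposal is correct and follows essentially the same argument as the paper's own proof: express $C_{\vp_0}(\widehat{{\SL}_1(D_4)})$ as the image under the projection $\SO_6(\CC)\twoheadrightarrow\PSO_6(\CC)$ of the union over $\nu\in\Hom(W_F,\{\pm1\})$ of the sets $\{h\in\SO_6(\CC):h\vp(w)h^{-1}\vp(w)^{-1}=\nu(w)\}$, observe that the $\nu=1$ piece gives $S_\vp(\widehat{{\SO}^*_{3,0}})$, check normality by direct conjugation, and deduce finite index from finiteness of $\Hom(W_F,\{\pm1\})$. Your write-up is in fact slightly more detailed than the paper's (you spell out the conjugation identity and the homomorphism to $\Hom(W_F,\{\pm1\})$), and you correctly use the projection $pr^*_{3,0}$ where the paper writes $\std^*_{3,0}$.
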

\begin{proof}
The centralizer $C_{\vp_0}(\widehat{{\SL}_{1}(D_4)})$ is equal to the image of the disjoint union
\[
\bigsqcup_{\nu \in {\Hom}(W_F, \{\pm 1\})} 
\{ h \in {\SO}_6(\CC) : h \vp(w)h^{-1} \vp(w)^{-1} = \nu(w) \}
\]
via the map $\std^*_{3,0}.$ Further, we note that:  
\[
\{ h \in {\SO}_6(\CC) : h \vp(w)h^{-1} \vp(w)^{-1} = 1 \} = C_{\vp}(\widehat{{\SO}^*_{3,0}}), 
\]
\[
S_{\vp}(\widehat{{\SO}^*_{3,0}}) = {\std}^*_{3,0} (C_{\vp}(\widehat{{\SO}^*_{3,0}})).
\]
It is elementary to check that
\[
g \cdot {\std}^*_{3,0} (C_{\vp}(\widehat{{\SO}^*_{3,0}})) \cdot g^{-1} = {\std}^*_{3,0} (C_{\vp}(\widehat{{\SO}^*_{3,0}}))
\]
for any $g \in S_{\vp_0}(\widehat{{\SL}_{1}(D_4)}).$
Since ${\Hom}(W_F, \{\pm 1\})$ is finite,
the lemma is proved.
\end{proof}
\begin{lm}  \label{lemma 3 SO6} 
With the notation above, we have the following commutative diagram
\[
\begin{CD}
1 @>>>  \widehat Z_{\vp, \scn}({\SO}^*_{3,0})  @>>> \cS_{\vp, \scn}(\widehat{{\SO}^*_{3,0}}) @>>> \cS_{\varphi}(\widehat{{\SO}^*_{3,0}})  @>>> 1 \\
@. @| @VV{\cap}V @VV{\cap}V @.\\
1 @>>>  \widehat Z_{\vp_0, \scn}({\SL}_{1}(D_4))  @>>> \cS_{\vp_0, \scn}(\widehat{{\SL}_{1}(D_4)}) @>>> \cS_{\varphi_0}(\widehat{{\SL}_{1}(D_4)})  @>>> 1.
\end{CD} 
\]
\end{lm}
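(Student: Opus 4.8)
The plan is to run the same argument as in the proof of Lemma~\ref{lemma 3 SO4}, with $\SL_{1}(D)\times\SL_2$ replaced by $\SL_1(D_4)$, the isogeny $\Spin_4(\CC)\to\PSO_4(\CC)$ replaced by $\Spin_6(\CC)\to\PSO_6(\CC)$, and the kernel $\mu_2(\CC)\times\mu_2(\CC)$ replaced by the cyclic group $\mu_4(\CC)=Z(\Spin_6(\CC))$. First I would invoke Lemmas~\ref{same connected components} and~\ref{subgroup lemma SO6}: since $S_{\vp}(\widehat{{\SO}^*_{3,0}})$ is a normal subgroup of finite index in $S_{\vp_0}(\widehat{{\SL}_1(D_4)})$, their identity components coincide and $\cS_{\vp}(\widehat{{\SO}^*_{3,0}})=\pi_0(S_{\vp}(\widehat{{\SO}^*_{3,0}}))$ is a subgroup of $\cS_{\vp_0}(\widehat{{\SL}_1(D_4)})$. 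This yields the right-hand vertical inclusion of the diagram.

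Next I would lift to the simply connected cover. Under the fixed isogeny $(\widehat{{\SO}^*_{3,0}})_{\scn}=\Spin_6(\CC)\twoheadrightarrow\PSO_6(\CC)=(\widehat{{\SO}^*_{3,0}})_{\ad}$, which is canonically identified with $\SL_4(\CC)\twoheadrightarrow\PGL_4(\CC)=(\widehat{{\SL}_1(D_4)})_{\ad}$, the groups $S_{\vp,\scn}(\widehat{{\SO}^*_{3,0}})$ and $S_{\vp_0,\scn}(\widehat{{\SL}_1(D_4)})$ are by definition the full preimages of $S_{\vp}(\widehat{{\SO}^*_{3,0}})$ and $S_{\vp_0}(\widehat{{\SL}_1(D_4)})$, hence each is a central extension of the corresponding $S$-group by $\mu_4(\CC)$, and the former is the preimage of a subgroup of the latter; thus $S_{\vp,\scn}(\widehat{{\SO}^*_{3,0}})$ is a normal subgroup of finite index in $S_{\vp_0,\scn}(\widehat{{\SL}_1(D_4)})$. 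Because the identity component of a preimage under a finite isogeny depends only on the identity component of the subgroup, Step~1 gives $S_{\vp,\scn}(\widehat{{\SO}^*_{3,0}})^{\circ}=S_{\vp_0,\scn}(\widehat{{\SL}_1(D_4)})^{\circ}$. Consequently $\widehat Z_{\vp,\scn}({\SO}^*_{3,0})=\mu_4(\CC)/(\mu_4(\CC)\cap S_{\vp,\scn}(\widehat{{\SO}^*_{3,0}})^{\circ})$ equals $\widehat Z_{\vp_0,\scn}({\SL}_1(D_4))$ (the left-hand equality), and by a further application of Lemma~\ref{same connected components}, $\cS_{\vp,\scn}(\widehat{{\SO}^*_{3,0}})=\pi_0(S_{\vp,\scn}(\widehat{{\SO}^*_{3,0}}))$ embeds into $\pi_0(S_{\vp_0,\scn}(\widehat{{\SL}_1(D_4)}))=\cS_{\vp_0,\scn}(\widehat{{\SL}_1(D_4)})$ (the middle vertical inclusion). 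Commutativity of the two squares is then immediate, since every map in sight is induced by $\pi_0$ applied to inclusions of the ambient groups compatible with the isogeny.

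I do not expect a substantial obstacle here: the structural content is identical to the $\SO_4$ case. The one point requiring a moment's care is that $Z(\Spin_6(\CC))$ is cyclic of order $4$ rather than elementary abelian, so that $\widehat Z_{\vp,\scn}({\SO}^*_{3,0})$ is a quotient of $\mu_4(\CC)$; this does not affect the argument. One should also record, as a standard fact used implicitly, that the identity component of the preimage of a subgroup $K'$ under a finite isogeny $\pi$ coincides with the identity component of $\pi^{-1}((K')^{\circ})$: indeed $\pi^{-1}((K')^{\circ})$ has finitely many components, its identity component surjects onto $(K')^{\circ}$, and it contains the identity component of $\pi^{-1}(K')$ since the latter maps into $(K')^{\circ}$. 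With these remarks the proof is complete.
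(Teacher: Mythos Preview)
Your proposal is correct and follows essentially the same route as the paper's own proof: invoke Lemmas~\ref{same connected components} and~\ref{subgroup lemma SO6} to obtain the inclusion on $\cS_{\vp}$, observe that both $S_{\vp,\scn}$ and $S_{\vp_0,\scn}$ are central extensions by $\mu_4(\CC)$ with coinciding identity components, and deduce the equality of $\widehat Z_{\vp,\scn}$ and the inclusion on $\cS_{\vp,\scn}$. Your added remarks on the cyclicity of $Z(\Spin_6(\CC))$ and on identity components under finite isogenies are helpful clarifications but do not depart from the paper's argument.
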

\begin{proof}
From Lemmas \ref{same connected components} and \ref{subgroup lemma SO6}, it follows that $\cS_{\varphi}(\widehat{{\SO}^*_{3,0}}) $ is a subgroup of $\cS_{\varphi_0}(\widehat{{\SL}_{1}(D_4)}).$
Note that $S_{\vp, \scn}(\widehat{{\SO}^*_{3,0}})$ and $S_{\vp_0, \scn}(\widehat{{\SL}_{1}(D_4)})$ are respectively both central extensions of $S_{\vp}(\widehat{{\SO}^*_{3,0}})$ and $S_{\vp_0}(\widehat{{\SL}_{1}(D_4)})$ by $\mu_4(\CC).$ 
Using the same arguments, we have $S_{\vp, \scn}(\widehat{{\SO}^*_{3,0}})^{\circ}$ and $S_{\vp_0, \scn}(\widehat{{\SL}_{1}(D_4)})^\circ$ are identical. 
It follows that $\widehat Z_{\vp, \scn}({\SO}^*_{3,0}) = \widehat Z_{\vp_0, \scn}({\SL}_{1}(D_4))$ and $\cS_{\vp, \scn}(\widehat{{\SO}^*_{3,0}}) \subset \cS_{\vp_0, \scn}(\widehat{{\SL}_{1}(D_4)}).$ 
Thus, the proof is complete.
\end{proof}
\begin{lm} \label{lemma 1 SO6}
With the notation above, $I(\ts)$ is a subgroup of $I(\ts_0).$
\end{lm}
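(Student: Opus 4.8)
The plan is to transcribe the proof of Lemma~\ref{lemma 1 SO4} to the present situation, the one new feature being that here $\tvp = \tvp_0 \oplus \mu$ has summands of different dimensions ($4$ and $1$). The two inputs are: the $L$-packet $\Pi_{\tvp}(\GSO^*_{3,0})$ is a singleton $\{\ts\}$ (Section~\ref{LLC GSO33}), and $\Pi_{\tvp_0}(\GL_1(D_4))$ is a singleton $\{\ts_0\}$ by the LLC for $\GL_m(D)$ \cite{hs11}; moreover the LLC for $\GL_1(D_4)$ \cite{hs11} and for $\GL_1$ \cite{ht01,he00} is compatible with twisting by characters of $F^\times$ (via the reduced norm, resp.\ local class field theory).

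I would first pass from the representation side to the Galois side. The singleton property of $\Pi_{\tvp}(\GSO^*_{3,0})$ together with the compatibility of the LLC with character twists (using $\GSO^*_{3,0}/\SO^*_{3,0}\hookrightarrow F^\times$ through $\simi^*_{3,0}$ and local class field theory) gives $I(\ts) = I(\tvp)$, and the same reasoning for $\Pi_{\tvp_0}(\GL_1(D_4))$ gives $I(\ts_0) = I(\tvp_0)$. It therefore suffices to show $I(\tvp) \subseteq I(\tvp_0)$. If a character $\chi$ satisfies $\tvp\chi \simeq \tvp$ (in $\widehat{\GSO^*_{3,0}}$, hence inside $\GL_4(\CC)\times\GL_1(\CC)$), then, decomposing $\tvp = \tvp_0 \oplus \mu$ and noting that the twist by $\chi$ sends $(\tvp_0,\mu)$ to $(\tvp_0\otimes\chi,\,\mu\chi^2)$ (which one reads off from $\widehat{\GSO^*_{3,0}}\simeq\{(g_1,g_2)\in\GL_4(\CC)\times\GL_1(\CC):\det g_1 = (g_2)^2\}$, or from $\GSO^*_{3,0}(F)\simeq(\GL_1(D_4)\times F^\times)/\{(z,z^{-2})\}$ together with the explicit $\simi^*_{3,0}$), comparing the two factors forces $\tvp_0\otimes\chi \simeq \tvp_0$, i.e.\ $\chi \in I(\tvp_0)$. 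Combining, $I(\ts) = I(\tvp) \subseteq I(\tvp_0) = I(\ts_0)$, and since $I(\ts)$ is a subgroup of $(F^\times)^D$ it is in particular a subgroup of $I(\ts_0)$.

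The only point I would be careful about is the normalisation in the middle step: one must check that the restriction of $\simi^*_{3,0}$ to the $\GL_1(D_4)$-factor is the reduced norm (equivalently, that $\chi$ acts on $\tvp_0$ as $\chi$ itself and not as a nontrivial power), so that the inclusion holds literally rather than up to a power map; this is immediate from the realisation above and is parallel to the corresponding computation in \cite{gt,gtan12}. Apart from that, the argument is a verbatim copy of the $\SO_4$ case, Lemma~\ref{lemma 1 SO4}.
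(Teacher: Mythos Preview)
Your proof is correct and follows the same approach as the paper's own proof, which is essentially a transcription of Lemma~\ref{lemma 1 SO4}: pass from $I(\ts)$ to $I(\tvp)$ and from $I(\ts_0)$ to $I(\tvp_0)$ using the singleton property and twist-compatibility of the LLC, then use $\tvp = \tvp_0 \oplus \mu$ to conclude $I(\tvp)\subset I(\tvp_0)$. Your added care about the normalisation of the twist on the two factors (yielding $(\tvp_0\otimes\chi,\mu\chi^2)$) is a helpful clarification that the paper leaves implicit.
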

\begin{proof}
Note that $\Pi_{\tvp}(\GSO_{\dagger})$ is a singleton from Section \ref{LLC GSO22} and the LLC for $\GL_n$ and $\GL_m(D)$ is compatible with twisting by characters. So, we have $I(\tvp) = I(\ts).$
Likewise, we have $I(\tvp_0) = I(\ts_0).$
From the fact that $\tvp$ is of the form $\tvp_0 \oplus \mu,$ it then follows that $I(\ts) \subset I(\tvp_0).$
\end{proof}
\begin{rem} \label{useful iso from gk}
From \cite[Theorem 4.3]{gk82}, we have
$\cS_{\varphi_0}(\widehat{{\SL}_{1}(D)\times\SL_2}) {\s} I(\tvp_0).$
\end{rem}
\begin{lm}  \label{lem 2 SO6}
With the  notation above, we have
\[
\cS_{\vp} \s I(\ts).
\]
\end{lm}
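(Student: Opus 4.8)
The plan is to mimic the proof of Lemma~\ref{lem 2 SO4}, since the $\SO_{3,3}$ setting differs only in the description of the isogeny kernel, which is again a copy of $\CC^{\times}$. First I would appeal to the exact sequence~\eqref{surjective prs30}, which identifies the kernel $\widehat{S}$ of $\std^*_{3,0}\colon \widehat{\GSO^*_{3,0}} \to \widehat{\SO^*_{3,0}}$ with $\CC^{\times}$, together with the fact (established in Section~\ref{LLC GSO33}) that the $L$-packet $\Pi_{\tvp}(\GSO^*_{3,0})$ is a singleton $\{\ts\}$. Lemma~\ref{the case of singleton} then yields $X(\tvp) \s I(\tvp)$, and since the LLC for $\GL_1(D_4)$ and $\GL_1$ is compatible with twisting by characters (as in the proof of Lemma~\ref{lemma 1 SO6}) we have $I(\tvp) = I(\ts)$. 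Thus it suffices to prove $\cS_{\vp}(\widehat{\SO^*_{3,0}}) \s X(\tvp)$.

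For this I would apply Lemma~\ref{lemma of Chaoli} to the pair $(\SO^*_{3,0}, \GSO^*_{3,0})$ with $\vp = \tvp \circ \std^*_{3,0}$, which supplies the exact sequence
\[
\cS_{\tvp}(\widehat{\GSO^*_{3,0}}) \longrightarrow \cS_{\vp}(\widehat{\SO^*_{3,0}}) \longrightarrow X(\tvp) \longrightarrow 1.
\]
It then remains to check that $\cS_{\tvp}(\widehat{\GSO^*_{3,0}})$ is trivial. This follows because $\widehat{\GSO^*_{3,0}} = \GSpin_6(\CC) \s \{(g_1,g_2) \in \GL_4(\CC) \times \GL_1(\CC) : \det g_1 = g_2^2\}$ is, up to a central torus, a product of general linear groups, and the centralizer in such a group of the image of any admissible homomorphism is connected; hence $C_{\tvp}(\widehat{\GSO^*_{3,0}})$ is connected, so $\cS_{\tvp}(\widehat{\GSO^*_{3,0}}) = \pi_0(S_{\tvp}(\widehat{\GSO^*_{3,0}}))$ is trivial. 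Chaining the two steps gives $\cS_{\vp} \s X(\tvp) \s I(\tvp) = I(\ts)$, as desired.

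The only point requiring genuine care is the connectedness of $C_{\tvp}(\widehat{\GSO^*_{3,0}})$: one must use the explicit fibered-product description of $\widehat{\GSO^*_{3,0}}$ and the standard fact that the centralizer of a reductive subgroup of $\GL_n(\CC)$ is a product of smaller $\GL$'s (hence connected), verifying that passing to the relevant central quotient or fiber product does not disconnect it. Everything else is a formal chase through Lemma~\ref{the case of singleton}, Lemma~\ref{lemma of Chaoli}, and the sequence~\eqref{surjective prs30}, exactly parallel to the $\SO_{2,2}$ case.
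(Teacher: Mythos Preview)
Your proposal is correct and follows essentially the same approach as the paper's proof: both invoke \eqref{surjective prs30} and Lemma~\ref{the case of singleton} to reduce to showing $\cS_{\vp} \s X(\tvp)$, and then apply Lemma~\ref{lemma of Chaoli} together with the connectedness of $C_{\tvp}(\widehat{\GSO_{\flat}})$ to conclude. You spell out slightly more than the paper does (the identification $I(\tvp) = I(\ts)$ and the justification for connectedness via the fibered-product description), but the logical skeleton is identical; note the minor slip that the composition should read $\vp = \std^*_{3,0} \circ \tvp$ rather than $\tvp \circ \std^*_{3,0}$.
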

\begin{proof}
From \eqref{surjective prs30} and Lemma \ref{the case of singleton}, it is enough to show that $\cS_{\vp} \s X(\tvp).$
This is immediate from Lemma \ref{lemma of Chaoli},
since the centralizer $C_{\tvp}(\widehat{{\GSO}_{\flat}})$ is connected so that $\cS_{\tvp}({\GSO}_{\flat})$ is trivial.
\end{proof}

\begin{pro}  \label{pro 4 SO6}
There is a homomorphism
$\Lambda_{\flat} : \cS_{\vp, \scn}(\widehat{{\SO}^*_{3,0}}) \rightarrow \mcA(\ts)$ 
(unique up to 1-dimensional character of $\cS_{\varphi}(\widehat{{\SO}^*_{3,0}})$) with the following commutative diagram
\begin{equation*} 
\begin{CD}
1 @>>>  \widehat Z_{\vp, \scn}({\SO}^*_{3,0})  @>>> \cS_{\vp, \scn}(\widehat{{\SO}^*_{3,0}}) @>>> \cS_{\varphi}(\widehat{{\SO}^*_{3,0}})  @>>> 1 \\
@. @VV{\zeta_{\flat}}V @VV{\Lambda^*_{3,0}}V @VV{\s}V @.\\
1 @>>> \CC^\times @>>> \mcA(\ts) @>>> I(\ts) @>>> 1.
\end{CD} 
\end{equation*}
\end{pro}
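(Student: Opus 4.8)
The plan is to run the proof of Proposition~\ref{pro 4 SO4} once more, now with $\SO^*_{1,1}$ replaced by $\SO^*_{3,0}$ and $\SL_1(D)\times\SL_2$ replaced by its analogue $\SL_1(D_4)$ (an inner form of $\SL_4$). First I would invoke the Hiraga--Saito homomorphism attached to $\SL_1(D_4)$ by the construction reviewed in Section~\ref{for sl(m,D)}: a homomorphism $\Lambda_{\SL_4}\colon \cS_{\vp_0,\scn}(\widehat{\SL_1(D_4)})\to\mcA(\ts_0)$ fitting into diagram~\eqref{a diagram}, where $\ts_0=\Pi$ is the representation of $\GL_1(D_4)$ attached to $\tvp_0$ by the LLC for $\GL_m(D)$. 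By Lemma~\ref{lemma 3 SO6} the group $\cS_{\vp,\scn}(\widehat{\SO^*_{3,0}})$ is a subgroup of $\cS_{\vp_0,\scn}(\widehat{\SL_1(D_4)})$, and under that inclusion $\widehat Z_{\vp,\scn}(\SO^*_{3,0})$ is identified with $\widehat Z_{\vp_0,\scn}(\SL_1(D_4))$; so I would simply define $\Lambda^*_{3,0}$ to be the restriction $\Lambda_{\SL_4}\big|_{\cS_{\vp,\scn}(\widehat{\SO^*_{3,0}})}$.

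It then remains to check that this restriction fills in the required commutative square. For the rightmost vertical arrow, Lemma~\ref{lem 2 SO6} gives the isomorphism $\cS_\vp(\widehat{\SO^*_{3,0}})\s I(\ts)$, while Lemma~\ref{lemma 1 SO6} gives $I(\ts)\subset I(\ts_0)$; since $\ts=\Pi\boxtimes\mu$ with $\mu$ one-dimensional, $V_{\ts}\cong V_{\ts_0}$ as $\CC$-vector spaces, so $\mcA(\ts)$ sits inside $\mcA(\ts_0)$ as the preimage of $I(\ts)$, compatibly with the exact sequences $1\to\CC^\times\to\mcA(\ts)\to I(\ts)\to 1$ and $1\to\CC^\times\to\mcA(\ts_0)\to I(\ts_0)\to 1$. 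Consequently the image of $\Lambda^*_{3,0}$ lands in $\mcA(\ts)$ and the bottom row of the square is the one for $\ts$, as required. For the leftmost vertical arrow, one must verify that $\zeta^*_{3,0}$ on $Z((\widehat{\SO^*_{3,0}})_\scn)\cong\mu_4(\CC)$ agrees, under the identification of Lemma~\ref{lemma 3 SO6}, with the character of $Z((\widehat{\SL_4})_\scn)\cong\mu_4(\CC)$ built into \eqref{a diagram} for $\SL_1(D_4)$; and the uniqueness of $\Lambda^*_{3,0}$ up to a one-dimensional character of $\cS_\vp(\widehat{\SO^*_{3,0}})$ is inherited from the same property of $\Lambda_{\SL_4}$ via Lemma~\ref{lem 2 SO6}.

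The one genuinely non-formal point, in contrast to the verifications above which are parallel to the $\SO^*_{1,1}$ case, is exactly this central-character compatibility: here $Z((\widehat{\SO_\flat})_\scn)\cong\mu_4(\CC)$ is cyclic of order $4$ rather than the elementary $2$-group that occurred for $\SO_{2,2}$, so one must actually trace the Kottwitz isomorphism through for both $\SO^*_{3,0}$ and $\SL_1(D_4)$ and match the resulting \emph{primitive} characters of $\mu_4(\CC)$ --- the key fact being that the inner twisting of $\GSO_{3,3}$ that produces $\GSO^*_{3,0}$ is carried entirely by the $\GL_1(D_4)$ factor, and that the two primitive characters correspond precisely to the passage between $D_4$ and $D_4^{op}$ recorded before Theorem~\ref{1-1 for SO6}. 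Once this is settled, the cases $\SO_{3,3}$ and $\SO(V_D)$ follow by the substitutions indicated at the start of Section~\ref{pf of 1-1 for SO6}, and the proposition then feeds --- exactly as Proposition~\ref{pro 4 SO4} does in Section~\ref{pf of 1-1 for SO4} --- into the chain of bijections $\Pi_\vp(\SO^*_{3,0})\leftrightarrow\Irr(\mcA(\ts),\id)\leftrightarrow\Pi(\cS_{\vp,\scn}(\widehat{\SO^*_{3,0}}),\zeta^*_{3,0})$ coming from \eqref{useful decomp} and \eqref{bij 333} that finishes Theorem~\ref{1-1 for SO6}.
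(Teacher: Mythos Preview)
Your proposal is correct and follows essentially the same route as the paper: define $\Lambda^*_{3,0}$ as the restriction of the Hiraga--Saito homomorphism $\Lambda_{\SL_4}$ to the subgroup $\cS_{\vp,\scn}(\widehat{\SO^*_{3,0}})\subset\cS_{\vp_0,\scn}(\widehat{\SL_1(D_4)})$ supplied by Lemma~\ref{lemma 3 SO6}, and then use Lemmas~\ref{lemma 1 SO6} and~\ref{lem 2 SO6} to see that the image lands in $\mcA(\ts)$ and that the right-hand square commutes. Your extra paragraph on the central-character compatibility (matching $\zeta^*_{3,0}$ with the Kottwitz character for $\SL_1(D_4)$ via the $\GL_1(D_4)$ factor) makes explicit a point the paper leaves to the phrase ``by the definition of $\Lambda_{\SL_4}$,'' but the underlying argument is the same.
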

\begin{proof}
Using Hiraga and Saito's homomorphism $\Lambda_{{\SL}_4}$ in \eqref{a diagram} and the fact that $\cS_{\varphi}(\widehat{{\SO}^*_{3,0}}) $ is a subgroup of $\cS_{\varphi_0}(\widehat{{\SL}_{1}(D_4)})$ (see Lemma \ref{lemma 3 SO6}), we define a map $\Lambda^*_{3,0} :  \cS_{\vp, \scn}(\widehat{{\SO}^*_{3,0}}) \rightarrow \mcA(\ts)$ as the restriction 
\[
\Lambda_{{\SL}_4}|_{\cS_{\vp, \scn}(\widehat{{\SO}^*_{3,0}})}
\]
of $\Lambda_{{\SL}_4}$ to $\cS_{\vp, \scn}(\widehat{{\SO}^*_{3,0}}).$
Due to Lemmas \ref{lemma 3 SO6}, \ref{lemma 1 SO6}, and \ref{lem 2 SO6}, and by the definition of $\Lambda_{\SL_4}$ in \eqref{a diagram}, $\Lambda^*_{3,0}$ is well-defined.
Since $I(\ts)$ is contained in $I(\Pi)$ and $\cS_{\varphi}(\widehat{{\SO}^*_{3,0}}) $ is a subgroup of $\cS_{\varphi_0}(\widehat{{\SL}_{1}(D_4)}),$  it follows that the image $\Lambda^*_{3,0}$ is in $\mcA(\ts).$
Thus, the proof is complete. 
\end{proof}

We now finish the Proof of Theorem \ref{1-1 for SO6}. 
Since $\Pi_{\vp}(\SO^*_{3,0})$ is in bijection with $\Irr(\mcA(\ts), \id)$ due to \eqref{useful decomp} and \eqref{bij 333}, and since $\Irr(\mcA(\ts), \id)$ is again in bijection with $\Pi(\cS_{\vp, \scn}(\widehat{{\SO}^*_{3,0}}), \zeta^*_{3,0})$ due to Proposition \ref{pro 4 SO6}, the proof of Theorem \ref{1-1 for SO6} is complete.
\subsection{Properties of $\L$-maps for $\SO_{3,3}$ and its inner forms} \label{properties for so33}
The $\L$-maps defined in Section \ref{const of L-packet for SO33} satisfy the following property.
We continue to use ${\SO}_\flat$ for ${\SO}_{3,3},$ $\SO(V_D),$ and ${\SO}^*_{3,0},$
so that $\L_{\flat},$ $\sigma_{\flat},$ and so on will be used accordingly.
\begin{pro} \label{discreteness for SO33}
A given $\sigma_{\flat} \in \Pi({\SO}_\flat)$ is an essentially square-integrable representation if and only if its $L$-parameter $\vp_{\sigma_{\flat}} := \L_{\flat}(\sigma_{\flat})$ does not factor through any proper Levi subgroup of ${\SO}_6(\CC).$
\end{pro}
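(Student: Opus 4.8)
The plan is to reduce the statement to the analogous known fact for $\GSO_{3,3}$ and its inner forms, exactly as was done for $\SO_{2,2}$ in Proposition~\ref{discreteness for SO22}. First I would recall, from the construction in Section~\ref{const of L-packet for SO33}, that $\sigma_{\flat}$ is an irreducible constituent of $\Res^{\GSO_{\flat}}_{\SO_{\flat}}(\ts_{\flat})$ for some $\ts_{\flat}\in\Pi(\GSO_{\flat})$, and that $\vp_{\sigma_{\flat}}=\std_{\flat}(L(\ts_{\flat}))$ by the very definition of $\L_{\flat}$. The key input is Remark~\ref{rem in rest from to} (from \cite[Proposition~2.7]{tad92}): every member of $\Pi_{\ts_{\flat}}(\SO_{\flat})$ is essentially square-integrable if and only if $\ts_{\flat}$ is. Combined with the local Langlands correspondence for $\GL_4$ and for $\GL_1(D_4)$ \cite{ht01,he00,scholze13,hs11}, which tells us $\ts_{\flat}$ is essentially square-integrable if and only if $\tvp_{\sigma_{\flat}}:=L_{\flat}(\ts_{\flat})$ does not factor through a proper Levi subgroup of $\widehat{\GSO_{\flat}}$, the statement will follow once we check that factoring through a proper Levi of $\widehat{\GSO_{\flat}}=\GSpin_6(\CC)$ is equivalent to the image $\vp_{\sigma_{\flat}}=\std_{\flat}(\tvp_{\sigma_{\flat}})$ factoring through a proper Levi of $\SO_6(\CC)$.

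The steps, in order, are: (1) write $\sigma_{\flat}\hookrightarrow\Res^{\GSO_{\flat}}_{\SO_{\flat}}(\ts_{\flat})$ and invoke Remark~\ref{rem in rest from to} to transfer essential square-integrability between $\sigma_{\flat}$ and $\ts_{\flat}$; (2) apply the LLC for the relevant general linear group (split or inner form) to translate essential square-integrability of $\ts_{\flat}$ into the non-factorization of $\tvp_{\sigma_{\flat}}$ through a proper Levi of $\widehat{\GSO_{\flat}}$; (3) use the surjection $\std_{\flat}\colon\widehat{\GSO_{\flat}}\twoheadrightarrow\widehat{\SO_{\flat}}=\SO_6(\CC)$, whose kernel is the central torus $\CC^{\times}$ (see \eqref{surjective prs30}), to match proper Levi subgroups on the two sides: a Levi of $\GSpin_6(\CC)$ maps onto a Levi of $\SO_6(\CC)$ and is proper precisely when its image is, because the central $\CC^{\times}$ lies in every Levi. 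This is exactly the orthogonal analogue of the $\SO_4(\CC)$ argument already carried out, so I would present it in the same telegraphic style: "$\sigma_{\flat}$ is essentially square-integrable iff $\ts_{\flat}$ is iff $\tvp_{\sigma_{\flat}}$ does not factor through any proper Levi of $\widehat{\GSO_{\flat}}$ iff $\vp_{\sigma_{\flat}}$ does not."

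The only genuine point to be careful about — and the place I expect to spend the most attention — is step (3): the compatibility of the standard map $\std_{\flat}$ with the Levi decompositions. One must observe that since $\ker(\std_{\flat})\cong\CC^{\times}$ is central in $\widehat{\GSO_{\flat}}$, it is contained in the center of every Levi subgroup, so Levi subgroups of $\GSpin_6(\CC)$ correspond bijectively (via $\std_{\flat}$) to Levi subgroups of $\SO_6(\CC)$, with properness preserved in both directions; hence $\tvp_{\sigma_{\flat}}$ factors through a proper Levi of $\widehat{\GSO_{\flat}}$ if and only if $\vp_{\sigma_{\flat}}=\std_{\flat}\circ\tvp_{\sigma_{\flat}}$ factors through a proper Levi of $\SO_6(\CC)$. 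Everything else is a direct citation of Remark~\ref{rem in rest from to} and the LLC for $\GL_4$ and $\GL_1(D_4)$, so no further calculation is needed. As in Section~\ref{properties for so22}, I would also append a remark that the same argument, with "essentially square-integrable" replaced by "tempered" and "does not factor through a proper Levi" replaced by "has bounded image in $\SO_6(\CC)$", gives the tempered analogue.
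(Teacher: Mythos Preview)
Your proposal is correct and follows essentially the same route as the paper's proof: restrict to $\SO_{\flat}$, invoke Remark~\ref{rem in rest from to} to pass essential square-integrability to $\ts_{\flat}$, apply the LLC for the relevant general linear group, and then descend through $\std_{\flat}$. The paper states the final Levi-correspondence step (your step (3)) telegraphically without justification, so your explicit observation that the central kernel $\CC^{\times}$ sits in every Levi is a welcome clarification rather than a different argument; one small point is that $\SO_{\flat}$ also includes $\SO(V_D)$, so you should mention $\GL_2(D)$ alongside $\GL_4$ and $\GL_1(D_4)$ (all covered by the same citations \cite{ht01,he00,scholze13,hs11}).
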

\begin{proof}
By the definition of $\L_{\flat}$ in Section \ref{const of L-packet for SO33}, $\sigma_{\flat}$ is an irreducible constituent of the restriction $\ts_{\flat}|_{{\SO}_\flat}$ for some $\ts_{\flat} \in \Pi({\GSO}_\flat).$
From Remark \ref{rem in rest from to} and \cite{ht01, he00, scholze13, hs11}, $\sigma_{\flat}$ is an essentially square-integrable representation if and only if $\ts_{\flat}$ is if and only if $\tvp_{\sigma_{\flat}}:= L_{\flat}(\ts_{\flat})$ does not factor through any proper Levi subgroup of ${\GSO}_\flat(\CC)$ if and only if $\vp_{\sigma_{\flat}}$ does not.
\end{proof}
\begin{rem}
In the same way with the proof of Proposition \ref{discreteness for SO33}, we have that a given $\sigma_{\flat} \in \Pi({\SO}_\flat)$ is tempered if and only if the image of its $L$-parameter $\vp_{\sigma_{\flat}} := \L_{\flat}(\sigma_{\flat})$ in ${\SO}_6(\CC)$ is bounded. 
\end{rem}
\section{LLC for $\Sp_{1,1}$} \label{LLC for Sp_{1,1}}
In this section, we state and prove the local Langlands conjecture for $\Sp_{1,1}.$  
Furthermore, we classify all cases of the central extension \eqref{central ext} for $\Sp_{1,1},$ describe all sizes of $L$-packets of $\Sp_{1,1},$ illustrate multiplicities in restriction from $\GSp_{1,1},$ and give an explicit example in which an interesting phenomenon appears.
\subsection{Revisiting the LLC for $\GSp_{1,1}$} \label{LLC GSP11}
We recall the LLC for $\GSp_{1,1},$ which was established by Gan and Tantono in \cite{gtan12}, and utilize it to construct the LLC for $\Sp_{1,1}$ in Section \ref{const of L-packet for Sp11}.
Consider ${\GSO^*_{3,0}}$ and ${\GSO^*_{1,1}}$ which participate in $L$-packets for $\GSp_{1,1}.$ 
The relations between dual groups in Section \ref{L-groups} can be combined with \cite[Section 7]{gtan12} to have the following inclusions $\iota^*_{3,0}, \iota^*_{1,1}$ on $L$-parameters:
\begin{equation*} 
\iota^*_{3,0}: 
\{ \text{irreducible 4-dimensional} ~ \tvp \in \Phi({\GSp}_{1,1}) \}
~~ \hookrightarrow ~~
 \Phi({\GL}_1(D_4)) \times \Phi({\GL}_1)
\end{equation*}
defined by $ \iota^*_{3,0}(\tvp) = (\tvp, \simi \tvp),$ and
\begin{equation*} 
\iota^*_{1,1}: 
\{ (\tvp_1, \tvp_2) \in \Phi({\GSO}^*_{1,1}) : \tvp_1 \neq \tvp_2, ~~ \det \tvp_1 = \det \tvp_2 \} / {\Out}({\SO}_4) 
~~ \hookrightarrow ~~
 \Phi({\GSp}_{1,1})  
\end{equation*} 
defined by $ \iota^*_{1,1}(\tvp_1, \tvp_2)=\tvp_1\oplus\tvp_2 = \tvp,$
where 
the action of ${\Out}({\SO}_4)$ on $\Phi({\GSO}^*_{1,1})$ is given by $(\tvp_1, \tvp_2) \mapsto (\tvp_2, \tvp_1).$
\begin{rem} \label{L-parameters of GSp11}
We note from \cite[Section 7]{gtan12} that $\tvp \in \Phi({\GSp}_{1,1})$ is either an irreducible 4-dimensional representation or the image of $\iota^*_{1,1}.$
Moreover, since $\tvp_1 \in \Phi(\GL_1(D))$ and $\tvp_2 \in \Phi(\GL_2),$ the action of of ${\Out}({\SO}_4)$ is non-trivial if and only if both $\tvp_1$ and $\tvp_2$ are elliptic $L$-parameters of ${\GL}_2.$
\end{rem}
The LLC for $\GSp_{1,1}$ states that there is a surjective, two-to-one map 
\[
L_{1,1} : {\Pi}({\GSp}_{1,1}) \longrightarrow \Phi({\GSp}_{1,1}),
\]
satisfying several natural conditions which determine the map uniquely (see \cite[p.763]{gtan12} for details).

\subsection{Construction of $L$-packets for $\Sp_{1,1}$} \label{const of L-packet for Sp11}
We define a map 
\begin{equation*} 
{\L}_{1,1} : {\Pi}({\Sp}_{1,1}) \longrightarrow \Phi({\Sp}_{1,1})
\end{equation*}
by ${\L}_{1,1}(\sigma) = \std_{1,1}(L_{1,1}(\ts))$ with $\ts \in \Pi({\GSp}_{1,1})$ such that 
\[
\sigma \hookrightarrow {\Res}_{{\Sp}_{1,1}}^{{\GSp}_{1,1}}(\ts).
\]
This is an analogue of the local Langlands correspondence for ${\Sp}_4$ which was established by Gan and Takeda in \cite{gtsp10}. 
Note that $L_{1,1}(\ts \otimes \chi) = L_{1,1}(\ts) \otimes \chi$ for any quasi-character $\chi$ of $F^{\times}$ \cite[(iv) p.2]{gtan12} and ${\L}_{1,1}$ is not depending on the choice of the lifting $\ts$ by Proposition \ref{pro for lifting}.
Thus, the map ${\L}_{1,1}$ is well-defined. Furthermore, since any $\vp \in \Phi({\Sp}_{1,1})$ can be lifted to some $\tvp \in \Phi({\GSp}_{1,1})$ \cite[Proposition 2.8]{gtsp10}, 
${\L}_{1,1}$ is a surjective. For each $\vp \in \Phi({\Sp}_{1,1}),$ the fiber is given by
\begin{equation} \label{def of L-packet}
\Pi_{\vp}({\Sp}_{1,1}) = \bigcup_{\ts \in \Pi_{\tvp}({\GSp}_{1,1})} \Pi_{\ts}({\Sp}_{1,1}),
\end{equation}
where $\tvp$ lies in $\Phi({\GSp}_{1,1})$ such that $\std_{1,1} \circ \tvp=\vp$ (see Theorem \ref{thm by Labesse}). 
Due to \cite[(iv) p.2]{gtan12} and Proposition \ref{pro for lifting}, the fiber does not depend on the choice of $\tvp.$
This forms an $L$-packet for ${\Sp}_{1,1}.$
\begin{rem}
Unlike the case of $\Sp_4,$ it is possible that the union in \eqref{def of L-packet} is not disjoint. 
This occurs only when $\tvp \in \Phi({\GSp}_{1,1})$ is of the form $\iota^*_{1,1}(\tvp_1,  \tvp_2)=\tvp_1 \oplus \tvp_2$  for some $(\tvp_1,  \tvp_2) \in \Phi({\GSO^*_{1,1}})$ such that $\tvp_1 \s \tvp_2 \chi$ for some quadratic character $\chi$ of $F^{\times}$ (see \cite[Proposition 6.8.(iii)(b)]{gtsp10}). 
Later, we will analyze this case in Section \ref{pf of 1-1 for Sp11} and give its explicit example in Section \ref{sec of example}).
\end{rem}
\subsection{Internal structure of $L$-packets for $\Sp_{1,1}$} \label{para}
We parameterize each $L$-packet $\Pi_{\vp}({\Sp}_{1,1})$ for ${\Sp}_{1,1}$ in terms of so-called $S$-groups, as described in Section \ref{conj str of L-packets}.

We narrow down notation in Sections \ref{conj str of L-packets} and \ref{whole sec of rest} to the case of $\Sp_{1,1}.$
Recall from Section \ref{L-groups} that
\[
\widehat{{\Sp}_{1,1}} = \widehat{{\Sp}_{4}} = {\PSp}_4(\CC) \s {\SO}_5(\CC). 
\]
Note that 
\[
(\widehat{{\Sp}_{1,1}})_{\ad} ={\PSp}_4(\CC), ~~~ 
(\widehat{{\Sp}_{1,1}})_{\scn} ={\Sp}_4(\CC), ~~~ 
Z((\widehat{{\Sp}_{1,1}})_{\scn}) = Z((\widehat{{\Sp}_{1,1}})_{\scn})^{\Gamma} \s \mu_2(\CC).
\]
Let $\vp \in \Phi({\Sp}_{1,1})$ be given.
We fix a lifting $\tvp \in \Phi({\GSp}_{1,1})$ via the surjective map $\widehat{{\GSp}_{1,1}}  \longrightarrow \widehat{{\Sp}_{1,1}}$ (see Theorem \ref{thm by Labesse}). 
With the notation in Section \ref{conj str of L-packets}, we have:
\begin{align*}
S_{\vp}(\widehat{{\Sp}_{4}}) &=S_{\vp}(\widehat{{\Sp}_{1,1}}) \subset {\PSO}_5(\CC),\\
S_{\tvp}(\widehat{{\GSp}_{4}})&=S_{\tvp}(\widehat{{\GSp}_{1,1}}) \subset {\PSO}_5(\CC),\\
S_{\vp, \scn}(\widehat{{\Sp}_{4}})& = S_{\vp, \scn}(\widehat{{\Sp}_{1,1}}) \subset {\Sp}_4(\CC),\\
S_{\tvp, \scn}(\widehat{{\GSp}_{4}}) &= S_{\tvp, \scn}(\widehat{{\GSp}_{1,1}}) \subset {\Sp}_4(\CC).
\end{align*}
We then have a central extension 
\begin{equation*} %
1 \longrightarrow \widehat Z_{\vp, \scn}({\Sp}_{1,1})  \longrightarrow \cS_{\vp, \scn}(\widehat{{\Sp}_{1,1}}) \longrightarrow \cS_{\vp}(\widehat{{\Sp}_{1,1}}) \longrightarrow 1.
\end{equation*}
We denote by $\mathbbm{1}$ the trivial character and $\sgn$  the non-trivial characters on $\ZZ/2\ZZ \s \mu_2(\CC).$
Considering the isomorphism $ Z(({\Sp}_{1,1})_{\scn}) \s \mu_2(\CC),$ $\mathbbm{1}$ maps to $\Sp_4$ and $\sgn$ to $\Sp_{1,1},$ via the Kottwitz isomorphism \cite[Theorem 1.2]{kot86}.

To state Theorem \ref{1-1 for Sp11} below, we need to recall three mutually exclusive possibilities of $\tvp \in \Phi(\GSp_{1,1})$ from \cite[Section 7]{gtan12} as follows.
\begin{itemize}
\item \textbf{Case I}: $\tvp$ is of the form $\tvp_1 \oplus \tvp_2,$ where $\tvp_i \in \Phi_{\el}({\GL}_2),$ $\tvp_1 \not\s \tvp_2,$ and $\det \tvp_1 = \det \tvp_2.$ 
Since $\Phi_{\el}({\GL}_2)=\Phi_{\el}({\GL}_1(D)),$ we thus note that $\tvp \in \Phi({\GSO}^*_{1,1}).$ 
Based on the classification in \cite[Proposition 6.8(iii)]{gtsp10}, we further subcategorize this case as follows:
\begin{itemize}
\item \textbf{(a)} $\tvp_1 \not\s \tvp_2 \otimes \chi$ for any character $\chi$ on $F^{\times},$ 
\item \textbf{(b)} $\tvp_1 \s \tvp_2 \otimes \chi$ with $\chi$ necessarily quadratic.
\end{itemize}
\item \textbf{Case II}: $\tvp$ is of the form $\chi (\tvp_0 \oplus (\omega_0 \oplus \mathbbm{1})),$ where
$\chi$ is a quasi-character on $F^{\times},$ $\tvp_0$ lies in $\Phi_{\el}({\GL}_1(D)),$ and $\omega_0$ denotes the central character of the essentially square-integrable representation corresponding to $\tvp_0$ via the local Langlands correspondence for $\GL_1(D)$ \cite[Chapter 11]{hs11}.
We note that $\tvp \in \Phi({\GSO}^*_{1,1}).$

\item \textbf{Case III}: $\tvp$ sits in $\Phi_{\el}({\GL}_1(D_4)),$ which in fact coincides with $\Phi({\GL}_1(D_4)).$
\end{itemize}
Next, we recall the $L$-packets $\Pi_{\tvp}(\GSp_{1,1})$ for each case, which were established in \cite{gtan12}.

\textbf{Case I:} $\Pi_{\tvp}(\GSp_{1,1})=\{ 
\ts_1=:\theta(JL(\tau_1) \boxtimes \tau_2),  \ts_2:=\theta(JL(\tau_2) \boxtimes \tau_2)
\},$ where $\theta$ stands for theta correspondence from $\GSO^*_{1,1}$ to $\GSp_{1,1},$ $JL$ denotes the local Jacquet-Langlands lift from $\GL_2(F)$ to $\GL_1(D),$ 
and $\tau_i \in \Pi_{\ess, \disc}(\GL_2)$ is corresponding to $\tvp_i$ via the local Langlands correspondence for $\GL_2$ \cite{ht01, he00}. 
Note that $\Pi_{\tvp}(\GSp_{1,1})$ consists of essentially square-integrable representations.

\textbf{Case II:} $\Pi_{\tvp}(\GSp_{1,1})=\{ \ts:=J_P(\rho, \chi) \},$ 
where $J_P(\rho, \chi)$ denotes the Langlands quotient of the standard module, 
$P \s (\GL_1(D)\times \GL_1) \cdot N$ (see \cite[Section 5.3]{gtan12}) is an $F$-parabolic subgroup (which is the Siegel parabolic subgroup) of $\GSp_{1,1},$ 
and $\rho$ is the essentially square-integrable representation corresponding to $\tvp_0$ via the local Langlands correspondence for $\GL_1(D).$
Note that $J_P(\rho, \chi)$ is not essentially square-integrable.

\textbf{Case III:} $\Pi_{\tvp}(\GSp_{1,1})=\{ \ts:=\pi \},$ where $\pi$ is the essentially square-integrable representation of $\GSp_{1,1}(F)$ 
whose theta lift $\theta(\pi)$ to $\GSO^*_{3,0}$ is $\Pi \boxtimes \mu \in \Pi(\GSO^*_{3,0}).$
Note that $\omega_{\Pi}=\mu^2$ and $\mu= \simi(\tvp).$ 
\begin{rem} \label{rem for L-packets GSp11}
The $L$-packets of Case I and Case III exhaust the set $\Pi_{\el, \disc}(\GSp_{1,1}),$ and the $L$-packets of Case II exhaust the set $\Pi(\GSp_{1,1}) \smallsetminus \Pi_{\el, \disc}(\GSp_{1,1}).$ 
\end{rem}
\begin{thm} \label{1-1 for Sp11}
With the notation above, given an $L$-parameter $\vp \in \Phi(\Sp_{1,1}),$ we fix its lifting $\tvp \in \Phi(\GSp_{1,1}).$
Then, there is a one-one bijection 
\begin{equation} \label{the 1-1}
\Pi_{\vp}({\Sp}_{1,1}) \overset{1-1}{\longleftrightarrow} \Irr(\cS_{\vp, \scn}(\widehat{{\Sp}_{1,1}}), {\sgn}),
\end{equation}
sending $\sigma \mapsto \rho_{\sigma},$ 
such that we have isomorphisms:
\begin{align*} \label{decompositions for sp11}
V_{\ts_i} ~ ~  & \s \bigoplus_{\sigma \in \Pi_{\ts_i}({\Sp}_{1,1})} \rho_{\sigma} \boxtimes  \si ~ (i=1, 2), ~~ \mbox{ for Case I-(a)}, \\ 
V_{\ts} ~ ~ &\s \bigoplus_{\sigma \in \Pi_{\ts}({\Sp}_{1,1})} \rho_{\sigma} \boxtimes  \si,  ~~  \mbox{ for Cases II and III},
\end{align*}
as representations of the semi-direct product $\cS_{\vp, \scn}(\widehat{{\Sp}_{1,1}}) \rtimes \Sp_{1,1}(F),$
and for Case I-(b), we have:
\[
\Pi_{\ts_1}({\Sp}_{1,1}) = \Pi_{\ts_2}({\Sp}_{1,1}),
\]
\[
\mbox{the multiplicity in }{\Res}^{{\GSp}_{1,1}}_{\Sp_{1,1}}(\ts_i) = \frac{\dimi \rho_{\sigma}}{2}, ~ (i=1,2).
\]
Here, $\Pi_{\sharp}({\Sp}_{1,1})$ denotes the set of equivalence classes of all irreducible constituents of ${\Res}_{{\Sp}_{1,1}}^{{\GSp}_{1,1}}(\sharp)$ with $\sharp \in \{ \ts_1, ~ \ts_2, ~ \ts\},$ as defined in Section \ref{results in rest}.
\end{thm}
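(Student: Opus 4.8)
The plan is to mimic the structure of the proofs of Theorems~\ref{1-1 for SO4} and \ref{1-1 for SO6}, but now accounting for the fact that $L$-packets of $\GSp_{1,1}$ have one \emph{or two} members, so the restriction from $\GSp_{1,1}$ to $\Sp_{1,1}$ must be compared with restriction along the auxiliary group $\GSO^*_{1,1}\to\SO^*_{1,1}$ (or $\GSO^*_{3,0}\to\SO^*_{3,0}$, depending on the case) by means of the theta-correspondence bijection in Proposition~\ref{bij bw consti}. Concretely, I would fix $\vp\in\Phi(\Sp_{1,1})$ and a lifting $\tvp\in\Phi(\GSp_{1,1})$, and split into the three cases of Section~\ref{para}. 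For \textbf{Case III}, $\tvp\in\Phi(\GL_1(D_4))$ and $\Pi_{\tvp}(\GSp_{1,1})=\{\ts\}$ is a singleton; here I would transport the problem through $\std^*_{3,0}$ and the theta lift $\theta(\pi)=\Pi\boxtimes\mu$, so that $\Pi_{\ts}(\Sp_{1,1})$ is identified (via Proposition~\ref{bij bw consti}(ii), which preserves multiplicity) with $\Pi_{\theta(\pi)}(\SO^*_{3,0})$, and then invoke Theorem~\ref{1-1 for SO6} together with a compatibility of the central extension \eqref{central ext} between $\Sp_{1,1}$ and $\SO^*_{3,0}$. For \textbf{Case II} the packet is again a singleton $\{\ts=J_P(\rho,\chi)\}$, and I would run the same argument through $\GSO^*_{1,1}$ and Theorem~\ref{1-1 for SO4}, using that the Langlands-quotient construction is compatible with restriction and with twisting by characters.

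The genuinely new work is in \textbf{Case I}, where $\tvp=\tvp_1\oplus\tvp_2\in\Phi(\GSO^*_{1,1})$ with $\tvp_1\not\simeq\tvp_2$, $\det\tvp_1=\det\tvp_2$, and $\Pi_{\tvp}(\GSp_{1,1})=\{\ts_1,\ts_2\}$ with $\ts_i=\theta(JL(\tau_1)\boxtimes\tau_2)$-type lifts. I would first establish, as the key structural input, the relationship between the two central extensions
\[
1\to\widehat Z_{\vp,\scn}(\Sp_{1,1})\to\cS_{\vp,\scn}(\widehat{\Sp_{1,1}})\to\cS_{\vp}(\widehat{\Sp_{1,1}})\to 1
\]
and
\[
1\to\widehat Z_{\vp_0,\scn}(\SO^*_{1,1})\to\cS_{\vp_0,\scn}(\widehat{\SO^*_{1,1}})\to\cS_{\vp_0}(\widehat{\SO^*_{1,1}})\to 1,
\]
where $\vp_0=\std_{1,1}\circ\iota^*_{1,1}$-image of $\tvp$ sits inside $\SO_4(\CC)\subset\SO_5(\CC)$; the point is that $C_{\vp}(\widehat{\Sp_{1,1}})$ contains the two classes $\ts_1,\ts_2$ swapped by $\Out(\SO_4)$, so $S_{\vp}(\widehat{\Sp_{1,1}})$ is an extension of $S_{\vp_0}(\widehat{\SO^*_{1,1}})$ by (at most) $\ZZ/2\ZZ$ coming from this swap. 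I would then use the theta-correspondence bijection of Proposition~\ref{bij bw consti}(ii), applied to the pair $(\GSp_{1,1},\Sp_{1,1})$ versus $(\GSO^*_{1,1},\SO^*_{1,1})$, to match irreducible constituents of $\Res^{\GSp_{1,1}}_{\Sp_{1,1}}(\ts_i)$ with those of $\Res^{\GSO^*_{1,1}}_{\SO^*_{1,1}}(\text{preimage})$, thereby reducing the internal parameterization to Theorem~\ref{1-1 for SO4} (hence ultimately to Hiraga--Saito's homomorphism $\Lambda_{\SL_2\times\SL_2}$ of \eqref{a diagram}), while tracking the extra $\ZZ/2\ZZ$.

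In \textbf{Case I-(a)} ($\tvp_1\not\simeq\tvp_2\otimes\chi$ for any $\chi$), the two liftings $\ts_1,\ts_2$ are not related by a character twist by Proposition~\ref{pro for lifting}, so $\Pi_{\ts_1}(\Sp_{1,1})\cap\Pi_{\ts_2}(\Sp_{1,1})=\emptyset$; the disjoint union of the two restrictions is what is parameterized by $\Irr(\cS_{\vp,\scn}(\widehat{\Sp_{1,1}}),\sgn)$, and I would produce the homomorphism $\Lambda_{\Sp_{1,1}}\colon\cS_{\vp,\scn}(\widehat{\Sp_{1,1}})\to\mcA(\ts_i)$ fitting into a diagram like Proposition~\ref{pro 4 SO4}, obtaining the claimed decompositions $V_{\ts_i}\simeq\bigoplus_{\sigma}\rho_\sigma\boxtimes\si$ from \eqref{useful decomp}--\eqref{bij 333}. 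In \textbf{Case I-(b)} ($\tvp_1\simeq\tvp_2\otimes\chi$, $\chi$ quadratic), Proposition~\ref{pro for lifting} forces $\Pi_{\ts_1}(\Sp_{1,1})=\Pi_{\ts_2}(\Sp_{1,1})$, so the \emph{two} restrictions together cover each constituent $\sigma$ with total multiplicity $2\cdot(\text{mult in }\Res(\ts_i))$; matching this against $\dimi\rho_\sigma$ via the same $\mcA(\ts_i)$-decomposition yields $\text{mult in }\Res^{\GSp_{1,1}}_{\Sp_{1,1}}(\ts_i)=\dimi\rho_\sigma/2$, which is the asserted formula, and explains why no clean decomposition of a single $V_{\ts_i}$ holds. \textbf{The main obstacle} I anticipate is precisely Case~I-(b): one must show that the central-extension comparison together with the theta bijection is tight enough that the factor of $2$ discrepancy between $\dimi\Res(\ts_i)$ and $\dim$ of irreducibles in $\Irr(\cS_{\vp,\scn}(\widehat{\Sp_{1,1}}),\sgn)$ is exactly accounted for by the $\Out(\SO_4)$-induced $\ZZ/2\ZZ$ in $\cS_\vp(\widehat{\Sp_{1,1}})$ — equivalently, that this element lies in the center of $\cS_{\vp,\scn}(\widehat{\Sp_{1,1}})$ and acts correctly on $V_{\ts_1}\oplus V_{\ts_2}$; verifying this requires a careful analysis of $\mcA(\ts_1)$, $\mcA(\ts_2)$ and the $I(\ts_i)$-groups (here denoted $I^{\GSp_{1,1}}$, $I^{\GSO^*_{1,1}}$), and is the content that forces the qualitatively different conclusion recorded in the theorem and in Remark~\ref{rem special for a-1b}.
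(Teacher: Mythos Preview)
Your overall architecture matches the paper's: case split according to Section~\ref{para}, transport via Proposition~\ref{bij bw consti} to the $\SO^*_{1,1}$ or $\SO^*_{3,0}$ side, and invoke Theorem~\ref{1-1 for SO4} or \ref{1-1 for SO6}. Cases~II and III proceed essentially as you describe; in Case~III the paper checks the bijection $\Irr(S_{\vp,\scn}(\widehat{\SO^*_{3,0}}),\zeta^*_{3,0})\leftrightarrow\Irr(S_{\vp,\scn}(\widehat{\Sp_{1,1}}),\sgn)$ by a short Frobenius-reciprocity argument showing restriction along the index-$2$ inclusion stays irreducible.

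Two points need correction. In \textbf{Case I-(a)} the paper does \emph{not} build a single $\Lambda_{\Sp_{1,1}}\colon\cS_{\vp,\scn}(\widehat{\Sp_{1,1}})\to\mcA(\ts_i)$; such a map to one $\mcA(\ts_i)$ cannot by itself see the disjoint union $\Pi_{\ts_1}\sqcup\Pi_{\ts_2}$. Instead the paper first shows $S_{\vp,\scn}(\widehat{\Sp_{1,1}})\simeq S_{\vp,\scn}(\widehat{\SO^*_{1,1}})$ (using the exact sequence $1\to S_{\tvp,\scn}(\widehat{\GSp_{1,1}})\to S_{\vp,\scn}(\widehat{\Sp_{1,1}})\to I^{\Sp_{1,1}}(\tvp)\to 1$ and $I^{\Sp_{1,1}}(\tvp)\simeq I^{\SO^*_{1,1}}(\tvp)$), and then observes that under the diagonal embedding $\mu_2(\CC)\hookrightarrow\mu_2(\CC)\times\mu_2(\CC)$ the character $\sgn$ lifts to exactly the two characters $\zeta^*_{1,1}=\sgn\times\mathbbm{1}$ and $\mathbbm{1}\times\sgn$. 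Via Kottwitz these correspond to the two inner forms $\SO^*_{1,1}$ and ${\SO^*_{1,1}}^{+-}$, and via theta to $\ts_1$ and $\ts_2$ respectively. This character-lifting step is the mechanism that separates the two restrictions.

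In \textbf{Case I-(b)} your anticipated resolution is backwards. The extra $\ZZ/2\ZZ$ (from $\chi$ with $\tvp_1\simeq\tvp_2\chi$) is precisely \emph{not} central in $\cS_{\vp,\scn}(\widehat{\Sp_{1,1}})$: e.g.\ for the partition $5=1+1+3$ one has $\cS_{\vp,\scn}\simeq\mathcal{D}_8$ with center only $\mu_2(\CC)$, and it is this non-centrality that forces the $\sgn$-irreducible to be $2$-dimensional (see Remark~\ref{rem special for a-1b}). The paper does not argue through $\mcA(\ts_i)$ here; it invokes Arthur's \cite[Lemma~9.2.2]{art12} to get a bijection $\Irr(S_{\vp,\scn}(\widehat{\Sp_{1,1}}),\sgn)\leftrightarrow\Irr(Z(S_{\vp,\scn}(\widehat{\Sp_{1,1}})),\sgn)$ together with the dimension formula, and then compares centers, showing $[Z(S_{\vp,\scn}(\widehat{\SO^*_{1,1}})):Z(S_{\vp,\scn}(\widehat{\Sp_{1,1}}))]=2$. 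That lemma of Arthur is a genuine external input your sketch is missing, and it is what produces both the bijection \eqref{the 1-1} and the relation $\dimi\rho_\sigma=2\cdot(\text{multiplicity in }\Res(\ts_i))$.
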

\begin{rem}
The bijection in Theorem \ref{1-1 for Sp11} is uniquely determined via the theta correspondence in Proposition \ref{bij bw consti}. 
Nevertheless, since our proof in Section \ref{pf of 1-1 for Sp11} relies on that of $\SL_n'$ by Hiraga and Saito in \cite{hs11}, 
the bijection in Theorem \ref{1-1 for Sp11} depends on the choice of a certain homomorphism $\Lambda_{\SL_n}$ described in Section \ref{for sl(m,D)}.
Moreover, since there is no Whittaker model for the non quasi-split group $\Sp_{1,1},$ each $L$-packet $\Pi_{\vp}(\Sp_{1,1})$ has no base point (cf. \cite[p.3003]{gtsp10}).
\end{rem}

\subsection{Proof of Theorem \ref{1-1 for Sp11}}
\label{pf of 1-1 for Sp11}
We follow the idea in \cite[Lemma 12.6]{hs11} and utilize the results in Sections \ref{sec bij in res}, \ref{para for SO22}, and \ref{para for SO33}. 
We begin with the following lemma.
\begin{lm} \label{a lemma for sp11 and sp4}
With the notation in Section \ref{conj str of L-packets}, we have
\[
\widehat Z_{\tvp, \scn}({\GSp}_{1,1}) = \widehat Z_{\tvp, \scn}({\Sp}_{1,1}) \s \mu_2(\CC).
\]
\end{lm}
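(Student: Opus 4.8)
The claim is that the finite groups $\widehat Z_{\tvp, \scn}({\GSp}_{1,1})$ and $\widehat Z_{\tvp, \scn}({\Sp}_{1,1})$ attached to $\tvp \in \Phi(\GSp_{1,1})$ and $\vp = \std_{1,1}\circ\tvp \in \Phi(\Sp_{1,1})$ both equal all of $\mu_2(\CC)$. Recall from Section \ref{conj str of L-packets} that $\widehat Z_{\tvp, \scn}(G) = Z(\widehat G_{\scn})/(Z(\widehat G_{\scn}) \cap S_{\tvp, \scn}(\widehat G)^{\circ})$. Since $\widehat{\Sp_{1,1}} = \SO_5(\CC)$ and $\widehat{\GSp_{1,1}} = \GSp_4(\CC)$, in both cases the simply connected cover is $\Sp_4(\CC)$ and $Z(\widehat G_{\scn}) = \mu_2(\CC) = \{\pm I_4\}$, as already noted in Section \ref{para}. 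So the content of the lemma is precisely that $-I_4 \notin S_{\tvp, \scn}(\widehat{\GSp}_{1,1})^{\circ}$ and $-I_4 \notin S_{\vp, \scn}(\widehat{\Sp}_{1,1})^{\circ}$; equivalently, that these identity components meet $\mu_2(\CC)$ only in the identity.

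\textbf{Key steps.} First I would reduce the two statements to one: from the diagram of dual groups in Section \ref{L-groups} and the equality $\vp = \std_{1,1}\circ\tvp$, together with the exact sequence \eqref{surjective prs11}, one sees that $S_{\vp, \scn}(\widehat{\Sp}_{1,1})$ and $S_{\tvp, \scn}(\widehat{\GSp}_{1,1})$ sit inside $\Sp_4(\CC)$ as the full preimages of $S_{\vp}(\widehat{\Sp}_{1,1}) \subset \PSp_4(\CC)$ and $S_{\tvp}(\widehat{\GSp}_{1,1}) \subset \PSp_4(\CC)$ respectively. I would check, using the explicit description $S_{\vp} = S_{\vp}(\widehat{\Sp}_{1,1})$, $S_{\tvp} = S_{\tvp}(\widehat{\GSp}_{1,1})$ and the fact that $C_{\vp}(\SO_5(\CC))$ is obtained from $C_{\tvp}(\GSp_4(\CC))$ by the disjoint-union-over-$\Hom(W_F,\{\pm1\})$ recipe already used in Lemmas \ref{subgroup lemma SO4} and \ref{subgroup lemma SO6}, that $S_{\tvp}(\widehat{\GSp}_{1,1})$ is a finite-index normal subgroup of $S_{\vp}(\widehat{\Sp}_{1,1})$; by Lemma \ref{same connected components} they have the same identity component, hence (taking preimages in $\Sp_4$) so do $S_{\tvp, \scn}(\widehat{\GSp}_{1,1})$ and $S_{\vp, \scn}(\widehat{\Sp}_{1,1})$. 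This gives $\widehat Z_{\tvp,\scn}(\GSp_{1,1}) = \widehat Z_{\tvp,\scn}(\Sp_{1,1})$ and reduces everything to showing this common quotient is $\mu_2(\CC)$, i.e. $S_{\tvp, \scn}(\widehat{\GSp}_{1,1})^{\circ} \cap \mu_2(\CC) = 1$. Then I would argue case by case along the trichotomy of $\tvp$ recalled in Section \ref{para}: in Cases I and III the $L$-parameter $\tvp$ is elliptic (Remark \ref{rem for L-packets GSp11}), so by the discussion at the end of Section \ref{conj str of L-packets} $S_{\tvp}(\widehat{\GSp}_{1,1})$ is finite and hence $S_{\tvp, \scn}(\widehat{\GSp}_{1,1})^{\circ} = 1$, giving $\widehat Z_{\tvp,\scn} = Z(\widehat G_{\scn}) = \mu_2(\CC)$ immediately. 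In Case II, $\tvp = \chi(\tvp_0 \oplus (\omega_0 \oplus \mathbbm 1))$ factors through a Levi, and I would compute $C_{\tvp}(\GSp_4(\CC))$ directly: its identity component is a torus of the form $\GL_1(\CC) \times (\text{scalars})$ sitting block-diagonally, and one checks that the torus $S_{\tvp}(\widehat{\GSp}_{1,1})^{\circ}$ lifts to a connected subgroup of $\Sp_4(\CC)$ whose intersection with the center $\{\pm I_4\}$ is trivial — because the relevant one-parameter subgroup is $\mathrm{diag}(t,t,t^{-1},t^{-1})$-type (or its $\GSp$-analogue), which passes through $-I_4$ only at $t = -1$, a point not in the connected subgroup unless the whole $\GL_1$ is there, which it is not after passing to $\Sp_4$ versus $\PSp_4$. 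Finally I would assemble: in all three cases $S_{\tvp, \scn}(\widehat{\GSp}_{1,1})^{\circ} \cap \mu_2(\CC) = 1$, so both $\widehat Z$-groups equal $\mu_2(\CC)$.

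\textbf{Main obstacle.} The genuinely delicate point is Case II: unlike Cases I and III it is not elliptic, so $S_{\tvp, \scn}(\widehat{\GSp}_{1,1})^{\circ}$ is a positive-dimensional torus, and I must verify that when this torus is pulled back from $\PSp_4(\CC)$ to $\Sp_4(\CC)$ it does not pick up the central element $-I_4$ in its identity component. This requires an honest look at how the centralizer torus sits relative to the coroot lattice of $\Sp_4$ — concretely, identifying the relevant cocharacter and checking it is primitive modulo the lattice so that the connected preimage is again a torus (not a twofold cover meeting the center). I expect this to be a short but careful computation with the $C_2$ root datum; everything else (the reduction, Lemma \ref{same connected components}, the elliptic cases) is formal. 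I would also keep an eye on the subtlety flagged in the Introduction for Case I-(b), but since that concerns the two members $\ts_1, \ts_2$ having the same restriction rather than the $\widehat Z$-group itself, it does not affect this lemma — Case I-(b) is still elliptic and the argument of Case I applies verbatim.
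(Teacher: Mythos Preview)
Your overall strategy is sound, and in the elliptic Cases I and III your argument is correct. However, the paper's proof is much shorter and takes a different route: it simply invokes \cite[Lemma 9.1]{hs11}, which asserts in general that for any $\tvp \in \Phi(G')$ one has
\[
Z(\widehat G_{\scn}) \cap S_{\tvp,\scn}(\widehat G)^{\circ} \subset \ker \zeta_{G'}.
\]
Since $\GSp_{1,1}$ (and likewise $\Sp_{1,1}$) corresponds to the nontrivial character $\sgn$ on $\mu_2(\CC)$, the kernel is trivial and the lemma follows in one line, uniformly in $\tvp$. This bypasses your case analysis entirely.

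More seriously, your Case II sketch contains an actual error. You claim the relevant one-parameter subgroup in $\Sp_4(\CC)$ is of type $\diag(t,t,t^{-1},t^{-1})$; but that is the connected center of the \emph{Siegel} Levi $\GL_2 \subset \Sp_4(\CC)$, and it \emph{does} contain $-I_4$ (at $t=-1$), so your subsequent reasoning about ``a point not in the connected subgroup'' is incoherent. The point you are missing is the $B_2/C_2$ duality swap: the Siegel parabolic of $\GSp_{1,1}$ on the group side corresponds to the \emph{Klingen} parabolic of $\GSp_4(\CC)$ on the dual side (the paper states this explicitly in a later remark). In Case II the parameter $\tvp = \chi(\tvp_0 \oplus \omega_0 \oplus \mathbbm{1})$ factors through the Klingen Levi $\GL_1 \times \GL_2 \subset \GSp_4(\CC)$, not the Siegel Levi; one checks directly that $\tvp$ cannot factor through the Siegel Levi because $\tvp_0$ is irreducible. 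The preimage in $\Sp_4(\CC)$ is the Klingen Levi $\GL_1 \times \SL_2$, whose connected center is $\{\diag(a,1,1,a^{-1}):a\in\CC^\times\}$, and this torus visibly avoids $-I_4$. With this correction your case-by-case approach does go through, but as written your Case II argument fails precisely at the step you flagged as the main obstacle.
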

\begin{proof}
Since $Z({\Sp}_4(\CC)) \s \mu_2(\CC),$ it suffices to show that $\widehat Z_{\tvp, \scn}({\GSp}_{1,1}) = \mu_2(\CC)/(\mu_2(\CC) \cap S_{\tvp}(\widehat{{\GSp}_{1,1}})^{\circ} )= Z(\widehat{{\Sp}_{1,1}}).$ 
Using the fact that the non quasi-split inner form ${\GSp}_{1,1}$ corresponds to the unique non-trivial character $\sgn$ via the Kottwitz isomorphism \cite[Theorem 1.2]{kot86}, \cite[Lemma 9.1]{hs11} yields
\[
\mu_2(\CC) \cap S_{\tvp}(\widehat{{\GSp}_{1,1}})^{\circ} \subset \ker ({\sgn}) = \{ 1 \},
\]
which completes the proof of the lemma.
\end{proof}
Lemma \ref{a lemma for sp11 and sp4} and the exact sequence \eqref{central ext}  give the following two exact sequences:
\begin{equation} \label{central ext for GSp11}
1 \longrightarrow \mu_2(\CC)  \longrightarrow \cS_{\tvp, \scn}(\widehat{{\GSp}_{1,1}}) \longrightarrow \cS_{\tvp}(\widehat{{\GSp}_{1,1}}) \longrightarrow 1,
\end{equation}
\begin{equation} \label{central ext for Sp11}
1 \longrightarrow \mu_2(\CC)  \longrightarrow \cS_{\vp, \scn}(\widehat{{\Sp}_{1,1}}) \longrightarrow \cS_{\vp}(\widehat{{\Sp}_{1,1}}) \longrightarrow 1.
\end{equation}
From \cite[Section 7]{gtan12}, we note that:  
\begin{align*}
\cS_{\tvp, \scn}(\widehat{{\GSp}_{1,1}}) \s \ZZ/2\ZZ \quad &\text{if} \quad  \cS_{\tvp}(\widehat{{\GSp}_{1,1}}) \s \{1\}, \\
\cS_{\tvp, \scn}(\widehat{{\GSp}_{1,1}}) \s \ZZ/2\ZZ \times \ZZ/2\ZZ \quad &\text{if} \quad \cS_{\tvp}(\widehat{{\GSp}_{1,1}}) \s \ZZ/2\ZZ.
\end{align*}
From \cite[Proposition 2.9]{gtsp10}, we recall an exact sequence 
\begin{equation} \label{central ext for Sp4}
1 \longrightarrow \cS_{\tvp}(\widehat{{\GSp}_{4}}) \longrightarrow \cS_{\vp}(\widehat{{\Sp}_{4}}) \longrightarrow I(\tvp) \longrightarrow 1.
\end{equation}  
Combining \eqref{central ext for GSp11}, \eqref{central ext for Sp11}, and \eqref{central ext for Sp4}, we have the following commutative exact sequences
\begin{equation} \label{commutative diagrams for Sp11}
\begin{CD}
@. @. @. 1 \\
@. @. @. @VVV @.\\
1 @>>> \mu_2(\CC) @>>> \cS_{\tvp, \scn}(\widehat{{\GSp}_{1,1}}) @>>> \cS_{\tvp} (\widehat{{\GSp}_{1,1}}) @>>> 1 \\
@. @| @VV{\cap}V @VV{\cap}V @.\\
1 @>>> \mu_2(\CC) @>>> \cS_{\vp, \scn}(\widehat{{\Sp}_{1,1}}) @>>> \cS_{\vp}(\widehat{{\Sp}_{1,1}}) @>>> 1 \\
@. @. @. @VVV @. \\
@. @. @. I(\tvp) \\
@. @. @. @VVV @.\\
@. @. @. 1 
\end{CD}
\end{equation}
By the snake Lemma, we obtain an exact sequence (the middle vertical exact sequence)
\begin{equation} \label{important exact}
1 \longrightarrow \cS_{\tvp, \scn}(\widehat{{\GSp}_{1,1}}) \longrightarrow \cS_{\vp, \scn}(\widehat{{\Sp}_{1,1}}) \longrightarrow I(\tvp) \longrightarrow 1.
\end{equation}

Now, we verify the theorem for each case in Section \ref{para}.

\textbf{Case I-(a)}:
Note that $\tvp$ is elliptic (hence, $\vp$ is elliptic). 
It then follows that $\widehat Z_{\vp, \scn}({\SO}^*_{1,1}) = \mu_2(\CC) \times \mu_2(\CC)$ (cf. Section \ref{conj str of L-packets}), the connected component group $\cS$   equals $S$ itself, and $\cS_{\tvp, \scn}(\widehat{{\GSp}_{1,1}}) \s \ZZ/2\ZZ \times \ZZ/2\ZZ.$ 

To emphasize groups in the definitions \eqref{def of I sigma} and \eqref{def of I vp}, we recall the notation $I^G$ with $G \in \{{\Sp}_{1,1}, ~ {\SO}^*_{1,1}\}.$
Since $\tvp_1 \not\s \tvp_2 \otimes \chi$ for any character $\chi$ on $F^{\times},$ 
we have
\begin{equation} \label{iso toward A-1(a)}
I^{{\Sp}_{1,1}}(\ts_1) \s I^{{\Sp}_{1,1}}(\ts_2) \s I^{{\Sp}_{1,1}}(\tvp) \s I^{{\SO}^*_{1,1}}(\tvp) \s I^{{\SO}^*_{1,1}}(JL(\tau_1) \boxtimes \tau_2) \s I^{{\SO}^*_{1,1}}(JL(\tau_2) \boxtimes \tau_1).
\end{equation}
From Lemma \ref{lem 2 SO4}, \eqref{central ext for Sp4}, \eqref{important exact}, and \eqref{iso toward A-1(a)},  we then have the following exact sequences
\begin{equation} \label{case I-(a)}
\begin{CD}
@. @. @. S_{\tvp}(\widehat{{\GSp}_{4}}) \\
@. @. @. @VV{\cap}V @.\\
 1 @>>> \mu_2(\CC) @>>> S_{\vp, \scn}(\widehat{{\Sp}_{1,1}}) @>>> S_{\vp}(\widehat{{\Sp}_{1,1}}) @>>> 1 \\
 @. @VV{\cap}V @| @VV{\text{surjective}}V \\
  1 @>>> S_{\tvp, \scn}(\widehat{{\GSp}_{1,1}}) @>>> S_{\vp, \scn}(\widehat{{\Sp}_{1,1}}) @>>> I^{\Sp_{1,1}}(\tvp) @>>> 1 \\
@. @AA{\s}A @AA{\s}A @AA{\s}A \\
1 @>>> \mu_2(\CC) \times \mu_2(\CC) @>>> S_{\vp, \scn}(\widehat{{\SO}^*_{1,1}}) @>>> S_{\vp} (\widehat{{\SO}^*_{1,1}}) @>>> 1 .
\end{CD}
\end{equation}
Here, we take the embedding $\mu_2(\CC) \hookrightarrow \mu_2(\CC) \times \mu_2(\CC)$ as $a \mapsto (a, a).$ 
Since the character $\sgn$ on $\mu_2(\CC)$ is lifted to the two characters $\sgn \times \mathbbm{1} (= \zeta^*_{1,1})$ and $\mathbbm{1} \times \sgn$ on $\mu_2(\CC) \times \mu_2(\CC)$ via the embedding,  
we have the following bijection 
\begin{equation} \label{1-1 for A-1}
\Irr(S_{\vp, \scn}(\widehat{{\Sp}_{1,1}}), \sgn) \overset{1-1}{\longleftrightarrow} 
\Irr(S_{\vp, \scn}(\widehat{{\SO}^*_{1,1}}), \zeta^*_{1,1}) \sqcup \Irr(S_{\vp, \scn}(\widehat{{\SO}^*_{1,1}}), \mathbbm{1}\times \sgn).
\end{equation}
We note that, via the Kottwitz isomorphism \cite[Theorem 1.2]{kot86}, the characters $\zeta^*_{1,1}$ and $\mathbbm{1} \times \sgn$
respectively correspond to 
\[
({\SL}_1(D) \times {\SL}_2)/\Delta\mu_2 = {\SO}^*_{1,1} ~~ \text{and} ~~ ({\SL}_2 \times {\SL}_1(D))/\Delta\mu_2 = {{\SO}^*_{1,1}}^{+-},
\]
which are non quasi-split $F$-inner forms of $\SO_4$ (see Remark \ref{some +_ SO4}).
Considering the characters $\zeta^*_{1,1}$ and $\mathbbm{1} \times \sgn$ as characters on $S_{\tvp, \scn}(\widehat{{\GSp}_{1,1}}),$  due to \eqref{case I-(a)}, we have the following bijections:
\begin{equation*} 
\Irr(S_{\vp, \scn}(\widehat{{\Sp}_{1,1}}), \zeta^*_{1,1}) \overset{1-1}{\longleftrightarrow} 
\Irr(S_{\vp, \scn}(\widehat{{\SO}^*_{1,1}}), \zeta^*_{1,1}) 
\overset{1-1}{\longleftrightarrow} \Pi_{JL(\tau_1) \boxtimes \tau_2}({\SO}^*_{1,1}),
\end{equation*}
\begin{equation*} 
\Irr(S_{\vp, \scn}(\widehat{{\Sp}_{1,1}}), \mathbbm{1} \times \sgn) \overset{1-1}{\longleftrightarrow} 
\Irr(S_{\vp, \scn}(\widehat{{\SO}^*_{1,1}}), \mathbbm{1} \times \sgn) 
\overset{1-1}{\longleftrightarrow} \Pi_{\tau_1 \boxtimes JL(\tau_2)}({{\SO}^*_{1,1}}^{+-})
\overset{1-1}{\longleftrightarrow} \Pi_{JL(\tau_2) \boxtimes \tau_1}({\SO}^*_{1,1}).
\end{equation*}
Since the character $\zeta^*_{1,1}$ corresponds to $\ts_1$ and the other $\mathbbm{1} \times \sgn$ corresponds to $\ts_2$ (see \cite[Section 7.2]{gtan12}),
Proposition \ref{bij bw consti} and Theorem \ref{1-1 for SO4} give rise to the following bijections: 
\begin{equation*} 
\Irr(S_{\vp, \scn}(\widehat{{\SO}^*_{1,1}}), \zeta^*_{1,1}) 
\overset{1-1}{\longleftrightarrow} \Pi_{JL(\tau_1) \boxtimes \tau_2}({\SO}^*_{1,1})
\overset{1-1}{\longleftrightarrow} 
\Pi_{\ts_1}({\Sp}_{1,1}),
\end{equation*}
\begin{equation*} 
\Irr(S_{\vp, \scn}(\widehat{{\SO}^*_{1,1}}), \mathbbm{1} \times \sgn) 
\overset{1-1}{\longleftrightarrow} \Pi_{\tau_1 \boxtimes JL(\tau_2)}({{\SO}^*_{1,1}}^{+-})
\overset{1-1}{\longleftrightarrow} \Pi_{JL(\tau_2) \boxtimes \tau_1}({\SO}^*_{1,1})
\overset{1-1}{\longleftrightarrow} 
\Pi_{\ts_2}({\Sp}_{1,1}).
\end{equation*}
Using Proposition \ref{bij 333}, Theorem \ref{1-1 for SO4}, and the isomorphism $S_{\vp, \scn}(\widehat{{\Sp}_{1,1}}) \s S_{\vp, \scn}(\widehat{{\SO}^*_{1,1}})$ in \eqref{case I-(a)},
we thus have the following isomorphism
\[
V_{\ts_i} ~ ~  \s \bigoplus_{\sigma \in \Pi_{\ts_i}({\Sp}_{1,1})} \rho_{\sigma} \boxtimes  \si ~ (i=1, 2),
\]
as representations of the semi-direct product $S_{\vp, \scn}(\widehat{{\Sp}_{1,1}}) \rtimes \Sp_{1,1}(F).$
This completes the proof of Theorem \ref{1-1 for Sp11} for {Case I-(a)}.

\textbf{Case I-(b)}: Since $\tvp_1 \s \tvp_2 \otimes \chi$ with $\chi$ necessarily quadratic, 
we have 
\begin{equation} \label{iso toward A-1(b)}
I^{{\Sp}_{1,1}}(\ts_1) \s I^{{\Sp}_{1,1}}(\ts_2) \s I^{{\SO}^*_{1,1}}(\tvp) \s I^{{\SO}^*_{1,1}}(JL(\tau_1) \boxtimes \tau_2) \s I^{{\SO}^*_{1,1}}(JL(\tau_2) \boxtimes \tau_1) \overset{\not\s}{\hookrightarrow} I^{{\Sp}_{1,1}}(\tvp).
\end{equation}
One can notice that \eqref{iso toward A-1(b)} is slightly different from \eqref{iso toward A-1(a)}.
From Lemma \ref{lem 2 SO4}, \eqref{central ext for Sp4}, \eqref{important exact}, and \eqref{iso toward A-1(b)},  we then have the following exact sequences
\begin{equation} \label{case I-(b)}
\begin{CD}
@. @. @. S_{\tvp}(\widehat{{\GSp}_{4}}) \\
@. @. @. @VV{\cap}V @.\\
 1 @>>> \mu_2(\CC) @>>> S_{\vp, \scn}(\widehat{{\Sp}_{1,1}}) @>>> S_{\vp}(\widehat{{\Sp}_{1,1}}) @>>> 1 \\
 @. @VV{\cap}V @| @VV{\text{surjective}}V \\
  1 @>>> S_{\tvp, \scn}(\widehat{{\GSp}_{1,1}}) @>>> S_{\vp, \scn}(\widehat{{\Sp}_{1,1}}) @>>> I^{\Sp_{1,1}}(\tvp) @>>> 1 \\
@. @AA{\s}A @AA{\cup}A @AA{\cup}A \\
1 @>>> \mu_2(\CC) \times \mu_2(\CC) @>>> S_{\vp, \scn}(\widehat{{\SO}^*_{1,1}}) @>>> S_{\vp} (\widehat{{\SO}^*_{1,1}}) @>>> 1 . 
\end{CD}
\end{equation}
To see the bijection \eqref{the 1-1} for Case I-(b), we use \cite[Lemma 9.2.2]{art12} and obtain the following bijection with the property \cite[(9.2.15)]{art12} 
\begin{equation} \label{bijection 1 for A-1(b)}
\Irr(S_{\vp, \scn}(\widehat{{\Sp}_{1,1}}), \sgn) 
\overset{1-1}{\longleftrightarrow} 
\Irr(Z(S_{\vp, \scn}(\widehat{{\Sp}_{1,1}})), \sgn), 
\end{equation}
where $Z(S_{\vp, \scn}(\widehat{{\Sp}_{1,1}}))$ denotes the center of the group $S_{\vp, \scn}(\widehat{{\Sp}_{1,1}}).$ 
Note that, in \cite[Lemma 9.2.2]{art12}, our notation $Z(S_{\vp, \scn}(\widehat{{\Sp}_{1,1}}))$ is $Z_{\psi}$ and our character $\sgn$ is $\widehat{\zeta}_{\psi}.$ 
Moreover, the number $N$ therein equals 5, so that $|I_o|$ (see Section \ref{all pos} for the definition) equals 3 or 5 depending on partitions of 5.
Using \cite[(9.2.10)]{art12} and \eqref{case I-(b)}, we then get:
\begin{equation} \label{inclusions of centers 1}
\mu_2(\CC)={Z}((\widehat{{\Sp}_{1,1}})_{\scn}) \leq Z(S_{\vp, \scn}(\widehat{{\Sp}_{1,1}})) \lneqq Z(S_{\vp, \scn}(\widehat{{\SO}^*_{1,1}})),
\end{equation}
\begin{equation} \label{inclusions of centers 2}
\mu_2(\CC)={Z}((\widehat{{\Sp}_{1,1}})_{\scn}) < \mu_2(\CC) \times \mu_2(\CC) ={Z}((\widehat{{\SO}^*_{1,1}})_{\scn}) \lneqq Z(S_{\vp, \scn}(\widehat{{\SO}^*_{1,1}})). 
\end{equation}
Furthermore, we note that 
\begin{equation*} 
[Z(S_{\vp, \scn}(\widehat{{\SO}^*_{1,1}})): Z(S_{\vp, \scn}(\widehat{{\Sp}_{1,1}}))] = 2, 
\end{equation*}
which implies that 
\[
[Z(S_{\vp, \scn}(\widehat{{\Sp}_{1,1}})): {Z}((\widehat{{\Sp}_{1,1}})_{\scn})] = [Z(S_{\vp, \scn}(\widehat{{\SO}^*_{1,1}})):{Z}((\widehat{{\SO}^*_{1,1}})_{\scn}) ].
\]
Restricting characters via the inclusions \eqref{inclusions of centers 1} and \eqref{inclusions of centers 2}, by \cite[Lemma 9.2.2]{art12}, we have the following bijections
\begin{equation*} 
\Irr(Z(S_{\vp, \scn}(\widehat{{\Sp}_{1,1}})), \sgn) 
\overset{1-1}{\longleftrightarrow} 
\Irr(Z(S_{\vp, \scn}(\widehat{{\SO}^*_{1,1}})), \sgn \times \mathbbm{1})
\overset{1-1}{\longleftrightarrow}
\Irr(S_{\vp, \scn}(\widehat{{\SO}^*_{1,1}}), \sgn \times \mathbbm{1}). 
\end{equation*}
Then, Proposition \ref{bij bw consti} and Theorem \ref{1-1 for SO4} yield
\begin{equation} \label{bijection 4 for A-1(b)}
\Irr(S_{\vp, \scn}(\widehat{{\SO}^*_{1,1}}), \sgn \times \mathbbm{1})
\overset{1-1}{\longleftrightarrow}
\Pi_{\ts_1}({\Sp}_{1,1}) = \Pi_{\ts_2}({\Sp}_{1,1}). 
\end{equation}
Proposition \ref{pro for lifting} implies that
\begin{equation} \label{two res}
{\Res}^{\GSp_{1,1}}_{\Sp_{1,1}}(\ts_1) \s  {\Res}^{\GSp_{1,1}}_{\Sp_{1,1}}(\ts_2),
\end{equation}
which gives the last equality in \eqref{bijection 4 for A-1(b)}.
Let $\sigma \in \Pi_{\vp}(\Sp_{1,1})$ be given. 
We write $\rho^*_{1,1}(\sigma)$ for the image of $\rho_{\sigma}$ via the composite of the bijections \eqref{bijection 1 for A-1(b)} -- \eqref{bijection 4 for A-1(b)}.
From \cite[Lemma 9.2.2]{art12}, we then have
\begin{equation} \label{difference 2}
\dimi \rho_{\sigma} = 2 \cdot \dimi \rho^*_{1,1}(\sigma).
\end{equation}
Note from Section \ref{results in rest} that $\dimi \rho^*_{1,1}(\sigma)$ equals the mutiplicity in the restrictions \eqref{two res}.
This completes the proof of Theorem \ref{1-1 for Sp11} for {Case I-(b)}.
\begin{rem} \label{rem special for a-1b}
We make the following remarks on the proof for {Case I-(b)} above.
\begin{itemize}
\item[1.] It follows from the idea in \cite[Section 9.2]{art12} that
$[S_{\vp, \scn}(\widehat{{\Sp}_{1,1}}):S_{\vp, \scn}(\widehat{{\SO}^*_{1,1}})] = 2.$
This index leads to the difference \eqref{difference 2} in dimensions (cf. \eqref{dim=dim=multi} and Remark \ref{multi for  SL}).
\item[2.] Unlike Case I-(a), we observe that $Z(S_{\vp, \scn}(\widehat{{\Sp}_{1,1}}))$ no longer contains $\mu_2(\CC) \times \mu_2(\CC) \s S_{\tvp, \scn}(\widehat{{\GSp}_{1,1}})$ (see Section \ref{all pos} for details).
This leads to the fact that only one between $\Irr(S_{\vp, \scn}(\widehat{{\Sp}_{1,1}}), \zeta^*_{1,1})$ and $\Irr(S_{\vp, \scn}(\widehat{{\Sp}_{1,1}}), \mathbbm{1}\times \sgn)$ is non-empty. 
Thus, we do not have the bijection \eqref{1-1 for A-1}.
\end{itemize}
\end{rem}

\textbf{Case II}:
We then have the following commutative exact sequences
\begin{equation} \label{case II}
\begin{CD}
 1 @>>> \mu_2(\CC) @>>> \cS_{\vp, \scn}(\widehat{{\Sp}_{1,1}}) @>>> \cS_{\vp}(\widehat{{\Sp}_{1,1}}) @>>> 1 \\
@. @VVV @VVV @VV{\s}V \\
1 @>>> \widehat Z_{\vp, \scn}({\SO}^*_{1,1}) @>>> \cS_{\vp, \scn}(\widehat{{\SO}^*_{1,1}}) @>>> \cS_{\vp} (\widehat{{\SO}^*_{1,1}}) @>>> 1 . 
\end{CD}
\end{equation}
Here, the very right isomorphism comes from the fact that $\# \cS_{\tvp}(\widehat{{\GSp}_{1,1}}) = 1$ and  
$
\cS_{\vp} (\widehat{{\SO}^*_{1,1}}) \s I(\tvp) \s \cS_{\vp}(\widehat{{\Sp}_{1,1}}).
$
Further, since the character ${\zeta}^*_{1,1}$ on $\mu_2(\CC) \times \mu_2(\CC)$ equals $\sgn \times \mathbbm{1},$ using the same idea in the proof of Lemma \ref{a lemma for sp11 and sp4}, $\widehat Z_{\vp, \scn}({\SO}^*_{1,1})$ is either $\mu_2(\CC) \times \{1\}$ or $\mu_2(\CC) \times \mu_2(\CC).$
We claim that 
\begin{equation} \label{Zsc}
\widehat Z_{\vp, \scn}({\SO}^*_{1,1}) \s \mu_2(\CC) \times \{1\} \s \mu_2(\CC).
\end{equation}
Indeed, it follows from \cite[p.535]{art12} that
\[
S_{\vp, \scn}(\widehat{G})^{\circ} = (Z(\widehat M_{\scn})^{\Gamma})^\circ,
\]
where $\Gamma$ acts trivially, $\widehat M_{\scn}$ is the preimage of $\widehat M$ in $\widehat G_{\scn},$ and $M$ is a Levi subgroup of $G$ with respect to which $\vp$ is elliptic. 
Due to the proof of \cite[Proposition 7.1 (iii)]{gtan12} for {Case II}, $M$ is the Siegel maximal Levi subgroup of $\SO^*_{1,1},$ and $\widehat M_{\scn}$ equals 
$
{\GL}_1(\CC) \times {\SL}_2(\CC),
$
which is a Levi subgroup of $\Spin_{4}(\CC) \s \SL_2(\CC) \times \SL_2(\CC) = \widehat G_{\scn}$ (see \cite[Section 4]{sh88} and \cite[2.3.1]{kim05}).
It thus follows that
\[
S_{\vp, \scn}(\widehat{G})^{\circ} = {\GL}_1(\CC), 
\]
which implies
\[
Z(\widehat G_{\scn} \cap S_{\vp, \scn}(\widehat{G})^{\circ}) \s \{1\} \times \mu_2(\CC).
\]
From the definition 
\[
\widehat Z_{\vp, \scn}({\SO}^*_{1,1}) := Z(\widehat G_{\scn}) / (Z(\widehat G_{\scn}) \cap S_{\vp, \scn}(\widehat{G})^{\circ}),
\]
the claim \eqref{Zsc} has been verified.
We then have
\begin{equation} \label{iso S Sp11 and SO11} 
\cS_{\vp, \scn}(\widehat{{\SO}^*_{1,1}}) \s \cS_{\vp, \scn}( \widehat{{\Sp}_{1,1}}),
\end{equation}
which implies that 
\[
\Irr(\cS_{\vp, \scn}(\widehat{{\SO}^*_{1,1}}), \zeta^*_{1,1}) \overset{1-1}{\longleftrightarrow} 
\Irr(\cS_{\vp, \scn}(\widehat{{\Sp}_{1,1}}), \sgn).
\]
From Proposition \ref{bij 333} and Theorem \ref{1-1 for SO4}, we thus have
\[
V_{\ts} ~ ~  \s \bigoplus_{\sigma \in \Pi_{\ts}({\Sp}_{1,1})} \rho_{\sigma} \boxtimes  \si,
\]
as representations of the semi-direct product $S_{\vp, \scn}(\widehat{{\Sp}_{1,1}}) \rtimes \Sp_{1,1}(F).$
This completes the proof of Theorem \ref{1-1 for Sp11} for {Case II}.

\textbf{Case III}: 
Note that $\tvp$ is elliptic (hence, $\vp$ is). 
It is clear that $\widehat Z_{\vp, \scn}({\SO}^*_{3,0}) = \mu_4(\CC).$
Further, the group connected component $\cS$  equals $S$ itself. 
We have the following commutative exact sequences
\begin{equation} \label{case III}
\begin{CD}
 1 @>>> \mu_2(\CC) @>>> S_{\vp, \scn}(\widehat{{\Sp}_{1,1}}) @>>> S_{\vp}(\widehat{{\Sp}_{1,1}}) @>>> 1 \\
@. @VV{\cap}V @VV{\cap}V @VV{\s}V \\
1 @>>> \mu_4(\CC) @>>> S_{\vp, \scn}(\widehat{{\SO}^*_{3,0}}) @>>> S_{\vp} (\widehat{{\SO}^*_{3,0}}) @>>> 1 . 
\end{CD}
\end{equation}
Indeed, the last isomorphism comes from 
\begin{equation} \label{case III below}
S_{\vp} (\widehat{{\SO}^*_{3,0}}) \s I(\tvp) \s S_{\vp}(\widehat{{\Sp}_{1,1}}),
\end{equation}
since $S_{\tvp}(\widehat{{\GSp}_{1,1}})$ is a singleton.
Further, since 
$S_{\vp}(\widehat{{\Sp}_{1,1}}) = S_{\vp}(\widehat{{\Sp}_{1,1}}) \subset \SO_{5}(\CC)$ ($\vp$ being elliptic), it follows that $S_{\vp, \scn}(\widehat{{\Sp}_{1,1}})$ equals the inverse image ${pr_{1,1}}^{-1}(S_{\vp}(\widehat{{\Sp}_{1,1}})),$
where we recall that $pr_{1,1}$ is the usual projection from $\Sp_{4}(\CC)$ onto $\SO_{5}(\CC).$
Likewise, 
since
$S_{\vp}(\widehat{{\SO}^*_{3,0}})\subset \SO_{6}(\CC),$ it follows that $ S_{\vp, \scn}(\widehat{{\SO}^*_{3,0}}) = {pr^*_{3,0}}^{-1}(S_{\vp}(\widehat{{\SO}^*_{3,0}})),$
where we recall that $pr^*_{3,0}$ is the usual projection from $\SL_{4}(\CC)$ onto $\SO_{6}(\CC).$
Using the commutative diagram
\[
\begin{CD}
 1 @>>> \mu_2(\CC) @>>> {\Sp}_4(\CC) @>{pr_{1,1}}>> {\SO}_5(\CC) @>>> 1 \\
@. @VV{\cap}V @VV{\cap}V @VV{\cap}V \\
1 @>>> \mu_4(\CC) @>>> {\SL}_4(\CC) @>pr^*_{3,0}>> {\SO}_6(\CC) @>>> 1,
\end{CD}
\]
we thus have the inclusion $S_{\vp, \scn}(\widehat{{\Sp}_{1,1}}) \subset S_{\vp, \scn}(\widehat{{\SO}^*_{3,0}}).$
\begin{rem}
This inclusion can be also obtained from the following commutative diagram
\[
\begin{CD}
 1 @>>> \mu_2(\CC) @>>> S_{\vp, \scn}(\widehat{{\Sp}_{1,1}}) @>>> S_{\vp}(\widehat{{\Sp}_{1,1}}) @>>> 1 \\
@. @| @VV{\cap}V @VV{\cap}V \\
1 @>>> \mu_2(\CC) @>>> S_{\vp, \scn}(\widehat{{\SO}^*_{3,0}}) @>>> C_{\vp} (\widehat{{\SO}^*_{3,0}}) @>>> 1 .
\end{CD}
\]
\end{rem}
To complete the proof of Theorem \ref{1-1 for Sp11} for {Case III}, from Proposition \ref{bij bw consti} and Theorem \ref{1-1 for SO6}, it remains to show that 
\[
\Irr(S_{\vp, \scn}(\widehat{{\SO}^*_{3,0}}), \zeta^*_{3,0}) \overset{1-1}{\longleftrightarrow} 
\Irr(S_{\vp, \scn}(\widehat{{\Sp}_{1,1}}), \sgn).
\]
Equivalently, due to \eqref{case III}, we claim that the restriction 
$
\rho|_{S_{\vp, \scn}(\widehat{{\Sp}_{1,1}})}
$
is irreducible and $\rho|_{\mu_2(\CC)} = \sgn,$  for any $\rho \in \Irr(S_{\vp, \scn}(\widehat{{\SO}^*_{3,0}}), \zeta^*_{3,0}).$
To verify this argument, we set $S_{\vp, \scn}(\widehat{{\Sp}_{1,1}})=A$ and $S_{\vp, \scn}(\widehat{{\SO}^*_{3,0}})=B$ for simplicity.
Using the Frobenius reciprocity, we have
\[
\langle \rho|_A, \rho|_A \rangle_A = \langle {\Ind}_A^B(\rho|_A), \rho \rangle_B,
\] 
where $\langle \rho_1, \rho_2 \rangle_H = \dimi_{\CC} \Hom_H(\rho_1, \rho_2)$ for any representation $\rho_i$ of a finite group $H.$
Since $B/A \s \ZZ/2\ZZ \s \mu_4(\CC)/\mu_2(\CC),$ we have
\[
{\Ind}_A^B(\rho|_A) \s \rho \oplus (\rho \otimes \chi),
\]
where $\chi$ is a character on $\mu_4(\CC)$ but trivial on $\mu_2(\CC).$ 
But, since $(\rho \otimes \chi)|_{\mu_4(\CC)} \neq \zeta^*_{3,0},$ we have $\rho \not\s  \rho \otimes \chi.$ 
Thus, it follows that
\[
\langle \rho|_A, \rho|_A \rangle_A = 1,
\]
which implies that $\rho|_A$ is irreducible.
Lastly, it is immediate that $\rho|_{\mu_2(\CC)} = {\zeta^*_{3,0}}|_{\mu_2(\CC)} = \sgn.$
This completes the proof of Theorem \ref{1-1 for Sp11} for {Case III}.
Therefore, the proof of Theorem \ref{1-1 for Sp11} is complete. 
\subsection{Properties of ${\L}_{1,1}$-map for $\Sp_{1,1}$} \label{properties for sp11}
The ${\L}_{1,1}$-map defined in Section \ref{const of L-packet for Sp11} satisfies the following properties.
\begin{pro} \label{discreteness for Sp11}
A given $\sigma \in \Pi({\Sp}_{1,1})$ is an essentially square-integrable representation if and only if its $L$-parameter $\vp_{\sigma} := \L_{1,1}(\sigma)$ does not factor through any proper Levi subgroup of ${\SO}_5(\CC).$
\end{pro}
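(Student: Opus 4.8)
The plan is to reduce the statement to the corresponding fact for $\GSp_{1,1}$ via the restriction formalism of Section \ref{results in rest}, exactly as was done for $\SO_{2,2}$ in Proposition \ref{discreteness for SO22} and for $\SO_{3,3}$ in Proposition \ref{discreteness for SO33}. By the construction of $\L_{1,1}$ in Section \ref{const of L-packet for Sp11}, any $\sigma \in \Pi(\Sp_{1,1})$ is an irreducible constituent of ${\Res}^{\GSp_{1,1}}_{\Sp_{1,1}}(\ts)$ for some $\ts \in \Pi(\GSp_{1,1})$, and $\vp_\sigma = \std_{1,1}(L_{1,1}(\ts))$. The first step is to invoke Remark \ref{rem in rest from to}, which says that $\sigma$ is essentially square-integrable if and only if $\ts$ is.

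The second step is to translate "$\ts$ is essentially square-integrable" into a condition on $\tvp_\sigma := L_{1,1}(\ts) \in \Phi(\GSp_{1,1})$. This should be part of the LLC for $\GSp_{1,1}$ in \cite{gtan12} (cf. Remark \ref{rem for L-packets GSp11}, which identifies the elliptic/discrete members with Cases I and III): $\ts$ is essentially square-integrable if and only if $\tvp_\sigma$ does not factor through any proper Levi subgroup of $\widehat{\GSp}_{1,1} = \GSp_4(\CC)$. The third and final step is to check that $\tvp_\sigma$ factors through a proper Levi of $\GSp_4(\CC)$ if and only if $\vp_\sigma = \std_{1,1}(\tvp_\sigma)$ factors through a proper Levi of $\widehat{\Sp}_{1,1} = \SO_5(\CC) = \PSp_4(\CC)$; this follows from the fact that $\std_{1,1}: \GSp_4(\CC) \twoheadrightarrow \PSp_4(\CC)$ has central kernel $\CC^\times$ (see \eqref{surjective prs11}), so it induces a bijection between the sets of Levi subgroups, and compatibility of "factoring through a Levi" under this central isogeny is routine. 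Chaining the three equivalences gives the proposition.

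The main obstacle I anticipate is the bookkeeping in the third step: one must be careful that a factorization of $\vp_\sigma$ through a proper Levi $\widehat{L} \subset \SO_5(\CC)$ genuinely lifts to a factorization of $\tvp_\sigma$ through the preimage Levi in $\GSp_4(\CC)$, rather than merely through some conjugate or through a larger subgroup. This is handled by Theorem \ref{thm by Labesse} applied to the exact sequence \eqref{surjective prs11} together with the standard observation that the preimage of a Levi subgroup under a central isogeny is again a Levi subgroup of the same relative semisimple rank; since $\CC^\times$ is central and connected, $\widehat{L}$ is proper in $\SO_5(\CC)$ exactly when its preimage is proper in $\GSp_4(\CC)$. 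Beyond this, everything is a direct transcription of the arguments already written out for $\SO_{2,2}$ and $\SO_{3,3}$, so no new idea is required. I would also add the parallel remark (as in those sections) that $\sigma$ is tempered if and only if the image of $\vp_\sigma$ in $\SO_5(\CC)$ is bounded, proved the same way using Remark \ref{rem in rest from to} and the boundedness part of the LLC for $\GSp_{1,1}$.
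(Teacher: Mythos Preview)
Your proposal is correct and follows essentially the same approach as the paper's own proof: the paper likewise lifts $\sigma$ to $\ts \in \Pi(\GSp_{1,1})$, invokes Remark \ref{rem in rest from to} and \cite[Theorem 9.1(b)]{gtan12} to pass to the Levi condition on $\tvp_\sigma$ in $\GSp_4(\CC)$, and then descends to $\vp_\sigma$ in $\SO_5(\CC)$. Your treatment of the third step is more explicit than the paper's (which simply asserts the equivalence), and your added tempered remark matches the paper's own remark following the proposition.
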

\begin{proof}
By the definition of $\L_{1,1}$ in Section \ref{const of L-packet for Sp11}, $\sigma$ is an irreducible constituent of the restriction $\ts|_{{\Sp}_{1,1}}$ for some $\ts \in \Pi({\GSp}_{1,1}).$
From Remark \ref{rem in rest from to} and \cite[Theorem 9.1(b)]{gtan12},  $\sigma$ is an essentially square-integrable representation if and only if $\ts$ is if and only if $\tvp_{\sigma}:= L_{1,1}(\ts)$ does not factor through any proper Levi subgroup of ${\GSp}_4(\CC)$ if and only if $\vp_{\sigma}$ does not.
\end{proof}
\begin{rem}
In the same way with the proof of Proposition \ref{discreteness for Sp11}, we have that a given $\sigma_{1,1} \in \Irr({\Sp}_{1,1})$ is tempered if and only if the image of its $L$-parameter $\vp_{\sigma_{1,1}} := \L_{1,1}(\sigma_{1,1})$ in ${\SO}_5(\CC)$ is bounded. 
\end{rem}
\begin{pro}
Let $\vp \in \Phi_{\disc}(\Sp_{1,1})$ and $\sigma_1, \sigma_2 \in \Pi_{\vp}(\Sp_{1,1})$ be given.
Let $M$ be an $F$-Levi subgroup of an $F$-inner form of $\Sp_{2n},$ which is the product of $\Sp_{1,1}$ and copies of $F$-inner forms of $\GL_{m_i}$ with $n=2+\sum m_i.$ 
For any $\tau \boxtimes \si_1, \tau \boxtimes \si_2 \in \Pi_{\disc}(M),$ $\nu \in \mathfrak{a}^{*}_{M, \CC},$ and $w \in W_M$ with $^wM = M,$
we have 
\[
\mu_M(\nu, \tau \boxtimes \si_1, w) = \mu_M(\nu, \tau \boxtimes \si_2, w).
\]
\end{pro}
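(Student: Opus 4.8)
The plan is to reduce the statement for $\Sp_{1,1}$ to the analogous statement for $\GSp_{1,1}$ via the compatibility of Harish--Chandra $\mu$-functions with restriction, and then to settle the $\GSp_{1,1}$ statement case by case using the classification of the $L$-packets $\Pi_{\tvp}(\GSp_{1,1})$ recalled in Section~\ref{para}. Fix a lift $\tvp\in\Phi(\GSp_{1,1})$ of $\vp$; for $i=1,2$ choose $\ts_i\in\Pi_{\tvp}(\GSp_{1,1})$ with $\si_i\hookrightarrow\Res^{\GSp_{1,1}}_{\Sp_{1,1}}(\ts_i)$, as in Section~\ref{const of L-packet for Sp11}. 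Let $\tG$ be the $F$-inner form of $\GSp_{2n}$ with $\tG_{\der}=G_{\der}$, where $G$ is the ambient $F$-inner form of $\Sp_{2n}$, and let $\tM\subset\tG$ be the $F$-Levi subgroup with $\tM_{\der}=M_{\der}$, so that $\tM\s\GSp_{1,1}\times\prod_i\GL_{m_i}(D_{d_i})$ with the same general-linear inner-form factors as $M$; pick a lift $\ttau\boxtimes\ts_i\in\Pi_{\disc}(\tM)$ of $\tau\boxtimes\si_i$.

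First I would record the restriction step. The normalized induced representation $\Ind^{G}_{M}(\tau\boxtimes\si_i)$ is a direct summand of $\Res^{\tG}_{G}\bigl(\Ind^{\tG}_{\tM}(\ttau\boxtimes\ts_i)\bigr)$, and the standard intertwining operators for $G$ are the restrictions of those for $\tG$. Combining the structure of the restriction recalled in Section~\ref{results in rest} with the comparison of Plancherel measures under restriction (cf.\ \cite{choiymulti} and \eqref{multi and characters}), one obtains
\[
\mu_M(\nu,\tau\boxtimes\si_i,w)=\mu_{\tM}(\nu,\ttau\boxtimes\ts_i,w)\qquad(i=1,2)
\]
for every $w\in W_M$ with ${}^{w}M=M$. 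Hence it suffices to prove the equality $\mu_{\tM}(\nu,\ttau\boxtimes\ts_1,w)=\mu_{\tM}(\nu,\ttau\boxtimes\ts_2,w)$ on the similitude side.

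Cases~II and~III, together with Case~I-(b), are then immediate. In Cases~II and~III the packet $\Pi_{\tvp}(\GSp_{1,1})$ is a singleton, so $\ts_1=\ts_2$. In Case~I-(b) we have $\Res^{\GSp_{1,1}}_{\Sp_{1,1}}(\ts_1)\s\Res^{\GSp_{1,1}}_{\Sp_{1,1}}(\ts_2)$ by \eqref{two res}, so $\si_1$ and $\si_2$ both occur in the restriction of the single representation $\ts_1$, and the previous display already yields $\mu_M(\nu,\tau\boxtimes\si_1,w)=\mu_{\tM}(\nu,\ttau\boxtimes\ts_1,w)=\mu_M(\nu,\tau\boxtimes\si_2,w)$. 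The remaining case is Case~I-(a), where $\ts_i=\theta(JL(\tau_i)\boxtimes\tau_{3-i})$ for $i=1,2$ are the two distinct members of the packet, and, since local Jacquet--Langlands preserves $L$-parameters, both $JL(\tau_1)\boxtimes\tau_2$ and $JL(\tau_2)\boxtimes\tau_1$ carry the common $L$-parameter $\tvp_1\oplus\tvp_2\in\Phi(\GSO^*_{1,1})$.

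For Case~I-(a) I would first work on $\GSO^*_{1,1}\s(\GL_1(D)\times\GL_2)/\{(z,z^{-1})\}$, whose proper Levi subgroups are products of groups of type $\GL_a(D_d)$ and $\GL_b$; the corresponding $\mu$-functions are products of Rankin--Selberg $\mu$-functions for general linear groups and their $F$-inner forms, each built from $\gamma$-factors of pairs of $L$-parameters and hence invariant under local Jacquet--Langlands, so $\mu$-functions are constant on $L$-packets of $\GSO^*_{1,1}$ and in particular agree for $JL(\tau_1)\boxtimes\tau_2$ and $JL(\tau_2)\boxtimes\tau_1$. It then remains to transport this equality through the theta correspondence $\GSO^*_{1,1}\leftrightarrow\GSp_{1,1}$ of Section~\ref{sec bij in res} (cf.\ \cite[Section~3]{gtan12}): using the explicit Weil representation and the compatibility of theta with parabolic induction, one shows that the $\mu_{\tM}$-function attached to $\ts_i$ equals the one attached to $JL(\tau_i)\boxtimes\tau_{3-i}$ on the orthogonal side. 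I expect this last point --- the invariance of these Plancherel measures under the theta correspondence for the non-quasi-split pair --- to be the main obstacle. An alternative that sidesteps it is to note that for the non-quasi-split $\GSp_{1,1}$ the reducibility points of $\Ind^{G}_{\tM}$, and thus the zeros and poles of $\mu_{\tM}$ and therefore $\mu_{\tM}$ itself, are governed by the $\gamma$-factors of the $L$-parameter of $\ts_i$ --- obtained by transfer from the split group $\GSp_4$ and Shahidi's formula together with the transfer of Plancherel measures to $F$-inner forms --- after which the equality follows from the fact that $\ts_1$ and $\ts_2$ share their $L$-parameter $\tvp$.
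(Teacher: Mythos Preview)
Your reduction step is exactly the paper's argument: pass from $\Sp_{1,1}$ to $\GSp_{1,1}$ using that restriction preserves intertwining operators and hence Plancherel measures (the paper cites \cite[Section~2.2]{choiy1} for this, not \cite{choiymulti}), together with the construction of $L$-packets in Section~\ref{const of L-packet for Sp11}. Where you diverge is on the similitude side: the paper simply invokes \cite[Section~8]{gtan12}, where the constancy of Plancherel measures on $L$-packets of $\GSp_{1,1}$ is already established (via preservation of local factors and transfer from $\GSp_4$). Your case-by-case analysis of $\Pi_{\tvp}(\GSp_{1,1})$ is therefore unnecessary.

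That said, most of your re-derivation is sound. Cases~II, III, and I-(b) are indeed immediate for the reasons you give. For Case~I-(a), however, your first route --- pushing the equality through the theta correspondence $\GSO^*_{1,1}\leftrightarrow\GSp_{1,1}$ by asserting that theta preserves $\mu$-functions --- is not justified and, as you yourself note, is the real obstacle; compatibility of theta with parabolic induction does not by itself yield equality of Plancherel measures in this non-quasi-split setting. Your alternative route (express $\mu_{\tM}$ via $\gamma$-factors of the $L$-parameter using Shahidi's formula on the split side and then transfer to the inner form) is the correct one, and it is precisely what \cite[Section~8]{gtan12} carries out. So your proposal is ultimately correct, but the shortest path is to cite that result directly rather than reproving it.
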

\begin{proof}
This follows from \cite[Section 8]{gtan12},  our construction of $L$-packets in Section \ref{const of L-packet for Sp11}, and the fact that the restriction of representations preserves the intertwining operator and the Plancherel measure (cf. \cite[Section 2.2]{choiy1}).
\end{proof}
\subsection{More studies on $L$-packets for ${\Sp}_{1,1}$} \label{all pos}
Based on \cite[Sections 5 and 6]{gtsp10} and \cite[Section 9.2]{art12}, we classify the central extension \eqref{central ext} for all $\vp \in \Phi(\Sp_{1,1})$ and illuminate all sizes of $L$-packets of $\Sp_{1,1}$ as well as all multiplicities in restriction from $\GSp_{1,1}.$
Let $\vp \in \Phi({\Sp}_{1,1})$ be given.
Using Theorem \ref{thm by Labesse}, we fix a lifting $\tvp  \in \Phi({\GSp}_{1,1}),$
such that $\vp = \std_{1,1} \circ \tvp,$ where $\std_{1,1}$ is the surjective map from $\widehat{{\GSp}_{1,1}}$ to $\widehat{{\Sp}_{1,1}}$ as in \eqref{std}.

From Section \ref{para}, we recall three mutually exclusive cases: {Case I}, {Case II}, {Case III}. 
For each case, we will give a description of $\cS_{\vp} = \cS_{\vp}(\widehat{\Sp_{1,1}})$ and its central lifting $\cS_{\vp, \scn}= \cS_{\vp, \scn}(\widehat{\Sp_{1,1}}),$ which fit into the following exact sequence in \eqref{commutative diagrams for Sp11}
\begin{equation*} 
1 \longrightarrow \mu_2(\CC) \longrightarrow \cS_{\vp, \scn} \longrightarrow \cS_{\vp} \longrightarrow 1.
\end{equation*} 
We claim that $\cS_{\vp, \scn}$ is isomorphic to one of the following groups
\[
\ZZ/2\ZZ \times \ZZ/4\ZZ,~ (\ZZ/2\ZZ)^2, ~ D_8, ~ \text{the Pauli group}, ~ D_8 \ast Q_8, ~ \ZZ/4\ZZ, ~ \ZZ/2\ZZ.
\]
To this end, we first recall some arguments from \cite[Section 9.2]{art12}, which will be applied to the cases of elliptic parameters: {Case I}, {Case III}.
Let $\vp$ be elliptic. 
Then, we have $\cS_{\vp} = S_{\vp}$ and $\cS_{\vp, \scn} = S_{\vp, \scn}.$
Following \cite[Section 9.2, p.531]{art12}, we set
\[
\vp = \vp_1 \oplus \vp_2 \oplus \cdots \oplus \vp_r,
\]
where $\vp_i \in \Phi_2(\GL(N_i))$ and $N_1 + N_2 + \cdots + N_r = 5,$
and
we have a decomposition of $\{1, \dots, 5\}$ into two disjoint subsets 
\[
\{1, \dots, 5\} = I_e \sqcup I_o,
\] 
consisting of those indices $k$ whose associate degrees $N_k$ are either even or odd.
Applying arguments in \cite[pp. 531-534]{art12} to  the case of $\Sp_{1,1},$ we have
\[
\delta_{\vp} = 1, ~~ \varepsilon_{\vp}=0, ~~ S_{\vp} \s (\ZZ/2\ZZ)^{r-1}.
\] 
Moreover, since $S_{\vp, \scn}$ is abelian if and only if $|I_o|\leq 2$ (see \cite[p.533]{art12}), $S_{\vp, \scn}$ is non-abelian if and only if the partition of $N$ is $1+1+1+1+1,$ $2+1+1+1,$ or $3+1+1.$
For these three cases, the derived group of $S_{\vp, \scn}$ equals $\{ \pm 1 \}$ and the center $Z(S_{\vp, \scn})$ has order $2^{|I_e|+1}.$


\textbf{Case I}:
Since $\tvp=\tvp_1 \oplus \tvp_2,$ we have $\vp = \std_{1,1} \circ \tvp = \mathbbm{1} \oplus (\tvp_1^{\vee} \otimes \tvp_2)$ (cf. \cite[p. 3008]{gtsp10}), where $\tvp_1^{\vee} $ is the contragredient of $\tvp_1.$ 
Further, since $S_{\tvp, \scn}(\widehat{{\GSp}_{1,1}}) \s \ZZ/2\ZZ \times \ZZ/2\ZZ,$ from \eqref{commutative diagrams for Sp11}, we have 
\begin{equation*} 
\ZZ/2\ZZ \times \ZZ/2\ZZ \hookrightarrow S_{\vp, \scn}.
\end{equation*}
As in Section \ref{para}, based on the classification in \cite[Proposition 6.8(iii)]{gtsp10}, we proceed with two following subcases.

\textbf{Case I-(a)}:
From the proof of \cite[Proposition 6.8(iii)]{gtsp10}, the partition of $N=5$ is either $1+2+2$ or $1+4.$
Note that $\cS_{\vp, \scn}$ is abelian for both cases.

When $5=1+2+2,$ $S_{\vp, \scn}$ is isomorphic to $ (\ZZ/2\ZZ)^3,$ $2\ZZ \times \ZZ/4\ZZ,$ or $\ZZ/8\ZZ,$ since $S_{\vp} \s (\ZZ/2\ZZ)^2.$
Using arguments in \cite[(9.2.7) and p. 531]{art12} on the order of each element of $S_{\vp, \scn},$ one can notice that $S_{\vp, \scn}$ consists of four elements of order 4, three elements of order 2, and the identity.
Note that our group $S_{\vp, \scn}$ is denoted by a subgroup $B_{\psi}$ of  $B(N)$ in \cite[p. 531]{art12}. 
We thus have  
\[
1 \longrightarrow \mu_2(\CC) \longrightarrow S_{\vp, \scn} \s \ZZ/2\ZZ \times \ZZ/4\ZZ  \longrightarrow S_{\vp} \s (\ZZ/2\ZZ)^2 \longrightarrow 1
\]
and $|\Pi_{\vp}(\Sp_{1,1})|=4.$ 
By Remark \ref{dim1=dim2}, \eqref{dim=dim=multi}, and Theorem \ref{1-1 for Sp11}, the multiplicity in ${\Res}^{\GSp_{1,1}}_{\Sp_{1,1}}(\ts)$ is 1.

When $5=1+4,$ $S_{\vp, \scn}$  is isomorphic to $(\ZZ/2\ZZ)^2$ or $\ZZ/4\ZZ,$ since $S_{\vp} \s \ZZ/2\ZZ.$ 
As in the case of $5=1+2+2$ in Case I-(a), one can notice that  $S_{\vp, \scn}$ consists of three elements of order 2 and the identity. 
we thus have 
\[
1 \longrightarrow \mu_2(\CC) \longrightarrow S_{\vp, \scn} \s (\ZZ/2\ZZ)^2 \longrightarrow S_{\vp} \s \ZZ/2\ZZ \longrightarrow 1
\]
and
$|\Pi_{\vp}(\Sp_{1,1})|=2.$
By Remark \ref{dim1=dim2}, \eqref{dim=dim=multi}, and Theorem \ref{1-1 for Sp11}, the multiplicity in ${\Res}^{\GSp_{1,1}}_{\Sp_{1,1}}(\ts)$ is 1.

\textbf{Case I-(b)}: 
From \cite[Proposition 6.8(iii)(b)]{gtsp10}, we notice that
\[
\vp = {\std}_{1,1} \circ \tvp = \mathbbm{1} \oplus \chi \oplus Ad(\tvp_1)\chi.
\]
This implies that the partition of $N=5$ is $1+1+3,$ $1+1+1+2,$ or $1+1+1+1+1.$
Note that $S_{\vp, \scn}$ is non-abelian for all three cases.

When $5=1+1+3,$ 
we have $S_{\vp} \s (\ZZ/2\ZZ)^2$ and $S_{\vp, \scn}$  is non-abelian of order 8, 
which is isomorphic to either the dihedral group $\mathcal{D}_8$ of order 8 or the finite quaternion group $Q_8$ of order 8.
As in the case of $5=1+2+2$ in Case I-(a), one can notice that $S_{\vp, \scn}$ consists of two elements of order 4, five elements of order 2, and the identity.
We thus have 
\[
1 \longrightarrow \mu_2(\CC) \longrightarrow S_{\vp, \scn} \s \mathcal{D}_8  \longrightarrow S_{\vp} \s (\ZZ/2\ZZ)^2 \longrightarrow 1.
\]
Further, by \cite[Lemma 9.2.2]{art12} (or directly from the character table of $\mathcal{D}_8$), ${\Irr}(S_{\vp, \scn})$ consists of one 2-dimensional representation and four 1-dimensional characters. 
Thus, $|\Pi_{\vp}(\Sp_{1,1})|=1.$ 
By Theorem \ref{1-1 for Sp11}, the multiplicity in ${\Res}^{\GSp_{1,1}}_{\Sp_{1,1}}(\ts)$ is 1.
We will give an explicit example for this case in Section \ref{sec of example}.

When $5=1+1+1+2,$ 
we have $S_{\vp} \s (\ZZ/2\ZZ)^3$ and  $S_{\vp, \scn}$   is non-abelian of order 16.
As in the case of $5=1+2+2$ in Case I-(a), one can notice that $S_{\vp, \scn}$ consists of eight elements of order 4, seven elements of order 2, and the identity.
Moreover, from \cite[(9.2.10)]{art12} we note that the center $Z(S_{\vp, \scn})$ has order 4 and contains an element of order 4 (denoted by $b_{\{4\}}$ in \cite[(9.2.10)]{art12}).
It follows that $Z(S_{\vp, \scn}) \s \ZZ/4\ZZ.$
Thus, $S_{\vp, \scn}$ has a group presentation
\[
\{
a^\alpha b^\beta c^\gamma ~:~ a^4=b^2=c^2=1, ~ ba=ab, ~ ca=ac, ~ cb=a^2bc 
\},
\]
which equals $G_{10}$ in \cite[Theorem 2]{wild05}.
Another description of $S_{\vp, \scn}$ is $\{\pm I,\pm iI,\pm X,\pm iX,\pm Y,\pm iY,\pm Z,\pm iZ\},$
where $i=\sqrt{-1},$ 
\[
I=I_{2 \times 2}=\begin{pmatrix}
1  & 0\\
0 &  1
\end{pmatrix}, \quad
X=\begin{pmatrix}
0  & 1\\
1 &  0
\end{pmatrix}, \quad
Y=\begin{pmatrix}
0  & -i\\
i &  0
\end{pmatrix}, ~~ \text{and}~~
Z=\begin{pmatrix}
1  & 0\\
0 &  -1
\end{pmatrix},
\]
which is called the Pauli group (see \cite{nc00}).
Therefore, we have
\[
1 \longrightarrow \mu_2(\CC) \longrightarrow S_{\vp, \scn} \s \text{the Pauli group} \s G_{10} \text{ in \cite[Theorem 2]{wild05}}  \longrightarrow S_{\vp} \s (\ZZ/2\ZZ)^3 \longrightarrow 1.
\]
Further, by \cite[Lemma 9.2.2]{art12}, ${\Irr}(S_{\vp, \scn})$ consists of two 2-dimensional representations and eight 1-dimensional characters.
Thus, $|\Pi_{\vp}(\Sp_{1,1})|=2.$
By Theorem \ref{1-1 for Sp11}, the multiplicity in ${\Res}^{\GSp_{1,1}}_{\Sp_{1,1}}(\ts)$ is 1.

When $5=1+1+1+1+1,$ 
we have $S_{\vp} \s (\ZZ/2\ZZ)^4$ and  $S_{\vp, \scn}$   is non-abelian of order 32.
As in the case of $5=1+2+2$ in Case I-(a), one can notice that  $S_{\vp, \scn}$ consists of twenty elements of order 4, eleven elements of order 2, and the identity.
Moreover, we note that $Z(S_{\vp, \scn}) \s \mu_2(\CC)$ by \cite[(9.2.10)]{art12}, and 
${\Irr}(S_{\vp, \scn})$ consists of one 4-dimensional representation and sixteen 1-dimensional characters by \cite[Lemma 9.2.2]{art12}.
Thus, $S_{\vp, \scn}$ is isomorphic to the central product $\mathcal{D}_8 \ast Q_8$ of the diheldral group $\mathcal{D}_8$ and the quaternion group $Q_8,$ defined as
\[
\mathcal{D}_8 \ast Q_8 := (\mathcal{D}_8 \times Q_8)/\langle(z_1,z_2) \rangle,
\]
where $\langle z_1 \rangle$ and $\langle z_2 \rangle$ denote the unique normal subgroups of $\mathcal{D}_8$ and $Q_8,$ respectively.
Note that $\mathcal{D}_8 \ast Q_8$ is one of two extra-special groups of order 32 and the other one has twelve elements of order 4 (cf. \cite{gor80, robinson96, cath05}).
Therefore, we have 
\[
1 \longrightarrow \mu_2(\CC) \longrightarrow S_{\vp, \scn} \s \mathcal{D}_8 \ast Q_8 \longrightarrow S_{\vp} \s (\ZZ/2\ZZ)^4 \longrightarrow 1,
\]
and $|\Pi_{\vp}(\Sp_{1,1})|=1.$
By Theorem \ref{1-1 for Sp11}, the multiplicity in ${\Res}^{\GSp_{1,1}}_{\Sp_{1,1}}(\ts)$ is 2.

\textbf{Case II}: 
Based on \cite[Proposition 6.10(ii)]{gtsp10}, we have
\begin{equation} \label{need to be det}
1 \longrightarrow \mu_2(\CC) \longrightarrow \cS_{\vp, \scn} \s \ZZ/4\ZZ \longrightarrow \cS_{\vp} \s \ZZ/2\ZZ \longrightarrow 1,
\end{equation}
if $\tvp_0$ is dihedral with respect to a quadratic extension $E/F$ (by definition, $\tvp_0 \s \tvp_0 \otimes \omega_{E/F},$ where $\omega_{E/F}$ is the quadratic character corresponding to $E$ via the local class field theory) and $\det (\tvp_0)=\omega_{E/F};$ otherwise, we have
\[
1 \longrightarrow \mu_2(\CC) \overset{\s}{\longrightarrow} \cS_{\vp, \scn} \longrightarrow \cS_{\vp} = 1 \longrightarrow 1.
\]
Accordingly, we have $|\Pi_{\vp}(\Sp_{1,1})|=2,$ or $1,$ respectively.
By Remark \ref{dim1=dim2}, \eqref{dim=dim=multi}, and Theorem \ref{1-1 for Sp11}, the multiplicity in ${\Res}^{\GSp_{1,1}}_{\Sp_{1,1}}(\ts)$ is 1.
We should mention the reason why $\cS_{\vp, \scn} \not\s (\ZZ/2\ZZ)^2$ in \eqref{need to be det}.
Indeed, since $\tvp_0$ is dihedral with respect to $E/F,$ using \cite[Section 9.2]{art12} with the partition $N=3=1+2,$ we have the following exact sequence
\[
1 \longrightarrow \mu_2(\CC) \longrightarrow S_{\vp_0, \scn}(\widehat{{\SL}_2}) \s \ZZ/4\ZZ \longrightarrow S_{\vp_0}(\widehat{{\SL}_2}) \s \ZZ/2\ZZ \longrightarrow 1,
\]
where $\vp_0$ is the image of $\tvp_0$ via the projection $\GL_2(\CC) \twoheadrightarrow \PGL_2(\CC) = \SO_3(\CC).$
Since  $\tvp$ is of the form $\chi (\tvp_0 \oplus (\omega_0 \oplus \mathbbm{1})),$  we have
\[
\cS_{\vp, \scn} = S_{\vp_0, \scn}(\widehat{{\SL}_2}) \times \{ 1 \}
\]
Thus, the exact sequence \eqref{need to be det} follows.

\textbf{Case III}: Since $\vp$ is elliptic, we follow the idea of {Case I} above.
From \cite[Theorem 6.5 and Proposition 6.8 (i)\&(ii)]{gtsp10}, there are only following cases. 
\begin{itemize}
\item \textbf{(a)}: $\tvp$ is premitive. This case is the situation of 
\cite[Theorem 6.5(I)]{gtsp10}).
Simply, the partition of $N=5$ is $5$ due to \cite[Proposition 5.1(I)]{gtsp10}. 
We thus have
\[
1 \longrightarrow \mu_2(\CC) \overset{\s}{\longrightarrow} S_{\vp, \scn} \longrightarrow S_{\vp} = 1 \longrightarrow 1
\]
and $|\Pi_{\vp}(\Sp_{1,1})|=1.$ 
By Remark \ref{dim1=dim2}, \eqref{dim=dim=multi}, and Theorem \ref{1-1 for Sp11}, the multiplicity in ${\Res}^{\GSp_{1,1}}_{\Sp_{1,1}}(\ts)$ is 1.

\item \textbf{(b)}: $\tvp = \Ind_{W_E}^{W_F}\si,$ $\si^{\tau} \s \si \chi,$ $\chi^2 \neq 1,$ and $\simi(\tvp)|_{W_E}=\chi \det\si \neq \det \si,$ where $E/F$ is a quadratic extension with $\Gal(E/F)=<\tau>,$ $\si$ is a primitive representation of $W_E$ (by definition, 
$\tvp_0$ is not of the form $\Ind_{W_F}^{W_E} \rho$ for a finite extension $F/E$ and some irreducible $\rho$), and $\chi$ is a character of $W_E.$ 
This case is the situation of 
\cite[Theorem 6.5(II)]{gtsp10}).
Simply, the partition of $N=5$ is $2+3$ due to \cite[Proposition 5.1(II)]{gtsp10}. 
As before, we have $S_{\vp} \s \ZZ/2\ZZ$ and the central extension $S_{\vp, \scn}$ by $\mu_2(\CC)$ is abelian of order 4, which is isomorphic to $(\ZZ/2\ZZ)^2$ or $\ZZ/4\ZZ.$
As in the case of $5=1+2+2$ in Case I-(a), one can notice that $S_{\vp, \scn}$ consists of two elements of order 4, one elements of order 2, and the identity.
We thus have
\[
1 \longrightarrow \mu_2(\CC) {\longrightarrow} S_{\vp, \scn} \s \ZZ/4\ZZ \longrightarrow S_{\vp} \s \ZZ/2\ZZ \longrightarrow 1
\]
and $|\Pi_{\vp}(\Sp_{1,1})|=2.$
By Remark \ref{dim1=dim2}, \eqref{dim=dim=multi}, and Theorem \ref{1-1 for Sp11}, the multiplicity in ${\Res}^{\GSp_{1,1}}_{\Sp_{1,1}}(\ts)$ is 1.

\item \textbf{(c)}: 
$\tvp = \Ind_{W_E}^{W_F}\si$ and  $\simi(\tvp)|_{W_E}=\det \si,$ where $E/F$ is a quadratic extension with $\Gal(E/F)=<\tau>$ and $\si$ is an irreducible 2-dimensional representation of $W_E.$
This case is the situation of 
\cite[Theorem 6.5(III)]{gtsp10}. 
We divide into the following subcases whose partitions follow from the proofs of \cite[Proposition 5.1 and Theorem 6.5(III)]{gtsp10}. We first consider the case that $\si^{\tau} \neq \si \chi$ for any character $\chi$ of $W_E,$ which is the situation of 
\cite[Theorem 6.5(III)(a)]{gtsp10}. 
\begin{itemize}
\item \textbf{(c1)}:  $\si$ is primitive. 
Then, the partition of $N=5$ is $1+4,$
which is the same as the second case in {Case I-(a)} above.

\item \textbf{(c2)}: $\si= \Ind_{W_E}^{W_K}\rho$ with $\Gal(K/F)\s \ZZ/4\ZZ.$ 
This is the same as Case III-(c1).

\item \textbf{(c3)}: $\si= \Ind_{W_E}^{W_K}\rho$ with $\Gal(K/F)\s \ZZ/2\ZZ \times \ZZ/2\ZZ.$ The partition of $N=5$ is $1+2+2,$ which is the same as the first case in {Case I-(a)}.

\item \textbf{(c4)}: $\si= \Ind_{W_E}^{W_K}\rho$ with $K/F$ non-Galois.
The partition of $N=5$ is either $1+4$ or $1+2+2,$ which is the same as Case III-(c1) or Case III-(c3), respectively.
\end{itemize}
We then consider the case that $\si^{\tau} = \si \chi$ for some character $\chi$ of $W_E$ ($\chi$ is necessarily quadratic), which is the situation of 
\cite[Theorem 6.5(III)(b)]{gtsp10}. 
\begin{itemize}
\item \textbf{(c5)}: $\chi^{\tau} \neq \chi.$ The partition of $N=5$ is $1+2+2,$ which is the same as Case III-(c3).

\item \textbf{(c6)}: $\chi^{\tau} = \chi.$ The partition of $N=5$ is $1+1+3,$ $1+1+1+2,$ or $1+1+1+1+1,$ which is the same as Case I-(b) above.
\end{itemize}

\item \textbf{(d)}: This is the remaining elliptic parameter which is the situation of \cite[Proposition 6.8(i)\&(ii)]{gtsp10}. We divide into the following subcases whose partitions follow from the proofs of\cite[Proposition 6.8]{gtsp10}.
\begin{itemize}
\item \textbf{(d1)}: $\tvp = \mu \boxtimes S_4$ with $\mu$ a 1-dimensional character of $W_F$ and $S_4$ the 4-dimensional representation of $\SL_2(\CC).$
The partition of $N=5$ is $5,$ which is the same as Case III-(a).

\item \textbf{(d2)}: $\tvp = \si \boxtimes S_2$ with $\si$ an irreducible 2-dimensional dihedral representation of $W_F$ and $S_2$ the 2-dimensional representation of $\SL_2(\CC).$ 
The partition of $N=5$ is either $1+1+3$ or $2+3.$
When $5=1+1+3,$ $S_{\vp, \scn}$ is non-abelian of order 8, which is the same as the case of  $5=1+1+3$ in Case I-(b). 
When $5=2+3,$ this is the same as Case III-(b).
\end{itemize}
\end{itemize}

\begin{rem}
By Remark \ref{dim1=dim2}, \eqref{dim=dim=multi}, and Theorem \ref{1-1 for Sp11}, the multiplicity in ${\Res}^{\GSp_{1,1}}_{\Sp_{1,1}}(\ts)$ is 1, except when the partition is $1+1+3,$ $1+1+1+2,$ $1+1+1+1+1.$ For these three cases, the multiplicity is 2, 2, 4 in the order given.
\end{rem}

\begin{rem}
We note that the two cases (i) and (iii) of \cite[Proposition 6.10]{gtsp10} are not relevant to $\Sp_{1,1}$ \cite[Section 3]{bo79}, since $\Sp_{1,1}$ has a unique (up to conjugacy) minimal $F$-parabolic subgroup, which is the Siegel maximal parabolic and isomorphic to $D^{\times}.$ 
Further, its dual parabolic subgroup in $\widehat{\Sp_{1,1}} = \SO_5(\CC)$ is the image of the Heisenberg (or Klingen) parabolic subgroup ($\s \GL_2(\CC) \times \GL_1(\CC)$) of $\GSp_4(\CC)$ under the projection $\std_{1,1}: \GSp_4(\CC)\twoheadrightarrow \SO_5(\CC)$
(see \cite[Section 3]{bo79}, \cite[Section 3.1]{pt11}, and \cite[p.762]{gtan12}). 
This is why the two cases (i) and (iii) of \cite[Proposition 6.10]{gtsp10} deos not occur in $\GSp_{1,1}$ (see \cite[Section 7]{gtan12}). 
\end{rem}


\subsection{An example} \label{sec of example} 
We give an explicit $L$-packet of $\Sp_{1,1},$ which is considered as a new phenomenon arising in the non quasi-split inner form $\Sp_{1,1}$ differently than the split group $\Sp_4.$ 
Let $\tvp = \tvp_0 \oplus \tvp_0 \chi \in \Phi(\GSp_4)$ be given, 
where $\chi$ is a quadratic character, $\tvp_0 \in \Phi(\GL_2)$ is primitive (by definition, 
$\tvp_0$ is not of the form $\Ind_{W_E}^{W_F} \rho$ for a finite extension $E/F$ and some irreducible $\rho$), 
and $\tvp_0 \not\s \tvp_0 \chi.$ 
We then have an irreducible supercuspidal representation $\pi \in \Pi(\GL_2)$ corresponding to $\vp_0$  via the LLC for $\GL_2$ (cf. \cite[Proposition 6.3]{gtsp10}). 
Further, from the LLC for $\GSp_4$ \cite{gt}, we have an $L$-packet attached to $\vp$
\[
\Pi_{\tvp}({\GSp}_4) = \{ \ts_1, \ts_2   \},
\]
where $\ts_1$ is the theta correspondence of $\pi \boxtimes \pi\chi$ from $\GSO_{2,2}$ and $\ts_2$ is the theta correspondence of $JL(\pi) \boxtimes JL(\pi)\chi$ from $\GSO_{4,0}.$ 
Here, $JL$ is the local Jacquet-Langlands correspondence between $\GL_2$ and $D^{\times}.$ 
The projection $\vp$ of $\tvp$ onto $\widehat{\Sp_4}=\SO_5(\CC)$ is 
\begin{equation} \label{L-parameter vp}
\vp= \mathbbm{1} \oplus \chi \oplus Ad(\tvp_0)\chi \in \Phi({\Sp}_4),
\end{equation}
and \cite[Proposition 6.8(iii)(b)]{gtsp10} yields 
\begin{equation} \label{example L-packet Sp4}
\Pi_{\vp}({\Sp}_4) = \{\si_1^+, \si_1^-, \si_2^+, \si_2^- \},
\end{equation}
where
\begin{equation} \label{decomp Sp4}
{\Res}^{{\GSp}_4}_{{\Sp}_4}(\ts_1) = \{ \si_1^+, \si_1^- \}, ~~ \text{and} ~~ {\Res}^{{\GSp}_4}_{{\Sp}_4}(\ts_2) = \{ \si_2^+, \si_2^- \}.
\end{equation}
From Proposition \ref{bij bw consti}, we further note that $\si_1^{+}$ and $\si_1^{-}$ are the theta correspondences of the restriction 
\[
{\Res}^{\GSO_{2,2}}_{\SO_{2,2}}(\pi \boxtimes \pi\chi) =\{\tau_1^+, \tau_1^-  \}
\] 
to $\Sp_4,$ and $\si_2^{+}$ and $\si_2^{-}$  are the theta correspondences of the restriction 
\[
{\Res}^{\GSO_{4,0}}_{\SO_{4,0}}(JL(\pi) \boxtimes JL(\pi)\chi) =\{\tau_2^+, \tau_2^-  \}
\] 
to $\Sp_4.$
On the other hand, from \cite{gtan12}, the given $L$-parameter $\tvp$ provides an $L$-packet for $\GSp_{1,1}$ 
\[
\Pi_{\tvp}({\GSp}_{1,1}) = \{ \ts'_1, \ts'_2   \},
\]
where $\ts'_1$ and $\ts'_2$ are respectively the theta correspondences of $JL(\pi) \boxtimes \pi\chi$ and $JL(\pi\chi) \boxtimes \pi$ from $\GSO^*_{1,1}.$
We note from Proposition \ref{pro for lifting} that 
$
{\Res}^{\GSO^*_{1,1}}_{\SO^*_{1,1}}(JL(\pi) \boxtimes \pi\chi) ~~\text{and}~~ {\Res}^{\GSO^*_{1,1}}_{\SO^*_{1,1}}(JL(\pi\chi) \boxtimes \pi)
$
are identical. 
Since $\tvp_0$ is primitive (see \cite[Proposition 6.3]{gtsp10}), the set of the irreducible constituents is a singleton. 
From Proposition \ref{bij bw consti}, we have
\begin{equation} \label{decomp Sp11}
{\Res}^{\GSp_{1,1}}_{\Sp_{1,1}}(\ts'_1) =  {\Res}^{\GSp_{1,1}}_{\Sp_{1,1}}(\ts'_2)= \{ \si' \},
\end{equation}
and $\ts'_2 \s \ts'_1 \chi.$
Moreover, from the fact that $I(\tvp) \s \{ \mathbbm{1}, \chi\}$ (see \cite[Proposition 6.3(iii)(b)]{gtsp10}), it follows that 
\begin{equation} \label{sepcial multi for Sp11}
I(\ts'_1) = I(\ts'_2) = \{ \mathbbm{1} \}.
\end{equation}
Thus, the $L$-packet of $\Sp_{1,1}$ attached to the $L$-parameter $\vp$ in \eqref{L-parameter vp} is
\begin{equation} \label{example L-packet Sp11}
\Pi_{\vp}({\Sp}_{1,1}) = \{ \si' \},
\end{equation}
as constructed in Section \ref{const of L-packet for Sp11}.
Proposition \ref{bij bw consti} implies that $\si'$ is the theta correspondence of the restriction 
\[
{\Res}^{\GSO^*_{1,1}}_{\SO^*_{1,1}}(\pi \boxtimes \pi\chi) =\{\tau' \}
\] 
to $\Sp_{1,1}.$ 
We recall {Case I-(b)} in Section \ref{all pos} and compute the centralizer $C_{\vp}(\widehat{{\Sp}_{1,1}})$ with $\vp$ in \eqref{L-parameter vp}.
We then have following commutative exact sequence 
\begin{equation} \label{the last diagram}
\begin{CD}
 1 @>>> \mu_2(\CC) @>>> {\Sp}_4(\CC) @>>> {\SO}_5(\CC) @>>> 1 \\
@. @| @AA{\cup}A @AA{\cup}A \\
 1 @>>> \mu_2(\CC) @>>> \cS_{\vp, \scn}(\widehat{{\Sp}_{1,1}}) @>>> \cS_{\vp}(\widehat{{\Sp}_{1,1}}) @>>> 1 \\
@. @| @VV{\s}V @VV{\s}V \\
1 @>>> \mu_2(\CC) @>>> \mathcal{D}_8 @>>> \ZZ/2\ZZ \times \ZZ/2\ZZ @>>> 1.
\end{CD}
\end{equation}
Combining \eqref{example L-packet Sp4}, \eqref{example L-packet Sp11}, and \eqref{the last diagram}, 
we have 
the following bijections 
\[
\Pi_{\vp}({\Sp}_{4}) = \{\si_1^+, \si_1^-, \si_2^+, \si_2^- \} \overset{1-1}{\longleftrightarrow} \Irr(\cS_{\vp, \scn}(\widehat{{\Sp}_{4}}), \mathbbm{1}) \s \Irr(\ZZ/2\ZZ \times \ZZ/2\ZZ),
\]
\[
\Pi_{\vp}({\Sp}_{1,1}) = \{\si' \} \overset{1-1}{\longleftrightarrow} \Irr(\cS_{\vp, \scn}(\widehat{{\Sp}_{1,1}}), \sgn) = \Irr(\mathcal{D}_8, \sgn),
\]
with the decompositions \eqref{decomp Sp4} and \eqref{decomp Sp11}.
Therefore, this example satisfies all properties in Theorem \ref{1-1 for Sp11}.

In what follows, we discuss a new phenomenon arising in the LLC for $\Sp_{1,1}$ which has never occurred in any previous LLC.
The map $\sigma \mapsto \rho_{\sigma}$ from $\Pi_{\vp}({\Sp}_{4})$ to  $\Irr(\cS_{\vp, \scn}(\widehat{{\Sp}_{4}}), \mathbbm{1})$ provides an equality 
\[
\dimi \rho_{\sigma} =1,
\] 
which equals the multiplicity in 
\[
{\Res}_{\Sp_4}^{\GSp_4}(\ts_i)
\]
for $i=1,~2.$ 
Further, since $\Irr(\mathcal{D}_8)$ consists of four 1-dimensional characters and one 1-dimensional irreducible representation, the map $\sigma' \mapsto \rho_{\sigma'}$ from $\Pi_{\vp}({\Sp}_{1,1})$ to  $\Irr(\cS_{\vp, \scn}(\widehat{{\Sp}_{1,1}}), \sgn)$ provides an equality 
\begin{equation} \label{mutli 2}
\dimi \rho_{\sigma'} = 2.
\end{equation}
However, from \eqref{multi and characters} and \eqref{sepcial multi for Sp11}, the multiplicity in 
$
{\Res}_{\Sp_{1,1}}^{\GSp_{1,1}}(\ts'_i)
$
is $1$ for $i=1,~2.$ 
Thus, we need to consider the following quantity $m(\si')$ in the restriction
\begin{equation*} 
{\Res}^{\GSp_{1,1}}_{\Sp_{1,1}}(\ts_1' + \ts_2') = m(\si')\cdot\si'
\end{equation*}
as the multiplicity of $\ts_i'$ in the restriction, which is $2$ and equals $\dimi \rho_{\sigma'}$ in \eqref{mutli 2}.
Further, this fulfills the following character identity
\begin{equation} \label{ci}
\Theta_{\si_1^{+}}(\gamma) + \Theta_{\si_1^{-}}(\gamma) + \Theta_{\si_2^{+}}(\gamma) + \Theta_{\si_2^{-}}(\gamma) = (-1) \cdot \dimi \rho_{\sigma'} \cdot \Theta_{\si'}(\gamma')
\end{equation}
for any elliptic regular semi-simple $\gamma \in \Sp_4(F)$ and $\gamma' \in \Sp_{1,1}(F)$ having the same $\Sp_4(\bar{F})$-conjugacy class. 
Here, $\Theta_{\sharp}$ is the (Harish-Chandra) character function attached to the irreducible smooth representation $\sharp$ over the regular semi-simple set.
The character identity \eqref{ci} between $\Sp_4$ and $\Sp_{1,1}$ is obtained by restricting the character identity  between $\GSp_4$ and $\GSp_{1,1}$ established in \cite[Proposition 11.1(i)]{changan15}.

\begin{rem}
We make the following remarks.
\begin{itemize}
\item[1.] 
Unlike the above example of this section, 
none of two members in the same $L$-packet of $\GSp_4(F)$ share the same restriction to $\Sp_4(F)$ (see \cite{ap06} and \cite[Proposition 2.2]{gtsp10}).
\item[2.] Although the multiplicity in the restriction is one, the dimension of the corresponding irreducible representation of $S_{\vp, \scn}$ is two, since the restrictions of two members in the same $L$-packet of $\GSp_{1,1}$ are the same. 
\item[3.] The dimension of the corresponding irreducible representation of $S_{\vp, \scn}$ may not completely govern the multiplicity of an individual irreducible representation of a $p$-adic group in the restriction (cf. Remark \ref{multi for  SL}).
\end{itemize}
\end{rem}
\appendix

\section{Internal structure of $L$-packets for $\Sp_{4}$} \label{revisit}
The purpose of this appendix is to establish an analogue of Theorem \ref{1-1 for Sp11} for the case of $\Sp_4.$
Note that the $L$-packets for $\Sp_4$ was constructed by Gan and Takeda in \cite{gtsp10} and  their parameterization was also discussed in \cite[pp.3002-3003]{gtsp10} in another way.
We apply the same method discussed in Sections \ref{para} and \ref{pf of 1-1 for Sp11} to the study on ${\Sp}_4.$ 

From \cite{gt} and \cite{gtsp10}, we recall the LLC for $\GSp_4$ and $\Sp_4.$
Consider ${\GSO_{3,3}},$ ${\GSO_{2,2}},$ and ${\GSO_{4,0}}$ which participate in $L$-packets for $\GSp_4$ via the theta correspondence. 
The relations in Section \ref{L-groups} between dual groups can be combined with \cite[Section 6]{gt} to have two inclusions $\iota_{3,3}$ and $\iota_{2,2}$ as follows:
\begin{equation*} 
\iota_{3,3}: \{ \text{irreducible 4-dimensional} ~ \tvp \in \Phi({\GSp}_{4}) \} 
~~ \hookrightarrow ~~
 \Phi({\GSO}_{3,3}) = \Phi({\GL}_4) \times \Phi({\GL}_1)
\end{equation*}
defined by $\iota_{3,3}(\tvp)= (\tvp, \simi \tvp),$ and
\begin{equation*} 
\iota_{2,2}: \{ (\tvp_1, \tvp_2) \in \Phi({\GSO}_{2,2}) : \det \tvp_1 = \det \tvp_2 \} / {\Out}({\SO}_4) 
~~ \hookrightarrow ~~
 \Phi({\GSp}_4) 
\end{equation*} 
defined by $\iota_{2,2}(\tvp_1, \tvp_2) =\tvp_1\oplus\tvp_2 = \tvp,$
where 
the action of ${\Out}({\SO}_4)$ on $\Phi({\GSO}_{2,2})$ is given by $(\tvp_1, \tvp_2) \mapsto (\tvp_2, \tvp_1).$
Since $\Phi({\GSO}_{4,0})$ is the subset of $\Phi({\GSO}_{2,2})$ consisting of $(\tvp_1, \tvp_2)$ with elliptic $L$-parameters $\tvp_1$ and $\tvp_2,$ the restriction of $\iota_{2,2}$ to $\Phi({\GSO}_{4,0})$ is denoted by $\iota_{4,0}.$
We note from \cite[Section 7]{gt} that $\tvp \in \Phi({\GSp}_4)$ is either an irreducible 4-dimensional representation or the image of $\iota_{2,2}.$
The LLC for $\GSp_4$ states that there is a surjective, two-to-one map 
\[
L_{4} : {\Pi}({\GSp}_4) \longrightarrow \Phi({\GSp}_4).
\]
satisfying several natural conditions which determine the map uniquely (see \cite[p.1842]{gt} for details).

The LLC for $\Sp_4$ \cite{gtsp10} states that  there is a surjective, finite-to-one map 
\[
{\L}_{4} : {\Pi}({\Sp}_4) \longrightarrow \Phi({\Sp}_4)
\]
defined by $\L_{4}(\sigma) = \std_{4}(L_{4}(\ts))$ with $\ts \in \Pi({\GSp}_{4})$ such that 
\[
\sigma \hookrightarrow {\Res}_{{\Sp}_{4}}^{{\GSp}_{4}}(\ts).
\]
Note that $\L_{4}$ is not depending on the choice of the lifting $\ts,$ since another lifting must be of the form $\ts \otimes \chi$ for some quasi-character $\chi$ of $F^{\times}$ by Proposition \ref{pro for lifting} and $L_{4}(\ts \otimes \chi) = L_{4}(\ts) \otimes \chi$ for any quasi-character $\chi$ of $F^{\times}$  \cite[Proposition 2.2]{gtsp10}.
For each $\vp \in \Phi({\Sp}_{4}),$ the $L$-packet $\Pi_{\vp}({\Sp}_{4})$ is given by
\begin{equation} \label{def of L-packet sp4}
\Pi_{\vp}({\Sp}_{4}) = \bigcup_{\ts \in \Pi_{\tvp}({\GSp}_{4})} \Pi_{\ts}({\Sp}_{4}),
\end{equation}
where $\tvp$ lies in $\Phi({\GSp}_{4})$ such that $\std_{4} \circ \tvp=\vp$ (see Theorem \ref{thm by Labesse}). 
Note that the union in \eqref{def of L-packet sp4} turns out to be disjoint and the $L$-packet does not depend on the choice of $\tvp$ \cite[Theorem 2.3]{gtsp10}.

To state an analogue of Theorem \ref{1-1 for Sp11} for $\Sp_4$ (Theorem \ref{1-1 for Sp4}) below,
we need three mutually exclusive possibilities of $\tvp \in \Phi(\GSp_{4})$ from \cite[Section 7]{gt}) as follows.
\begin{itemize}
\item \textbf{Case i}: $\tvp$ is of the form $\tvp_1 \oplus \tvp_2,$ where $\tvp_i \in \Phi_{\el}({\GL}_2)$ and $\det \tvp_1 = \det \tvp_2.$ 
Since $\Phi_{\el}({\GL}_2)=\Phi_{\el}({\GL}_1(D)),$ we thus note that $\tvp \in \Phi(\GSO_{4,0}).$

\item \textbf{Case ii}: $\tvp$ is of the form $\tvp_1 \oplus \tvp_2,$ where $\tvp_i \in \Phi({\GL}_2)$ with at least one of $\tvp_1$ and $\tvp_2$ in $\Phi({\GL}_2) \smallsetminus \Phi_{\el}({\GL}_2),$ and $\det \tvp_1 = \det \tvp_2.$ 
We thus note that $\tvp \in \Phi({\GSO}_{2,2}),$ but not in $\Phi({\GSO}_{4,0}).$

\item \textbf{Case iii}: $\tvp$ is irreducible 4-dimensional, and its image via the map $\iota_{3,3}$ lies in $\Phi({\GSO}_{3,3}).$
\end{itemize}
Next, we recall the $L$-packets $\Pi_{\tvp}(\GSp_{4})$ for $\GSp_{4}$ which were constructed in \cite[Section 7]{gtan12}.

\textbf{Case i}: $\Pi_{\tvp}(\GSp_{4})=\{ 
\ts_1:=\theta(\tau_1 \boxtimes \tau_2),  \ts_2:=\theta(JL(\tau_1) \boxtimes JL(\tau_2))
\},$ where $\tau_i \in \Pi_{\ess, \disc}(\GL_2)$ is corresponding to $\tvp_i$ via the local Langlands correspondence for $\GL_2$ \cite{ht01, he00, scholze13}, 
the first $\theta$ stands for theta correspondence from $\GSO_{2,2}$ to $\GSp_{4},$ 
the second $\theta$ does for that from $\GSO_{4,0}$ to $\GSp_{4},$ 
and $JL$ denotes the local Jacquet-Langlands lift from $\GL_2(F)$ to $\GL_1(D).$ 
Note that $\Pi_{\tvp}(\GSp_{4})$ consists of essentially square-integrable representations.

\textbf{Case ii}: 
$\Pi_{\tvp}(\GSp_{4})=\{ 
\ts:=\theta(\tau_1 \boxtimes \tau_2)
\},$ where $\tau_i \in \Pi(\GL_2)$ is corresponding to $\tvp_i$ via the local Langlands correspondence for $\GL_2$ \cite{ht01, he00, scholze13} and
$\theta$ stands for theta correspondence from $\GSO_{2,2}$ to $\GSp_{4}.$ 
Note that $\theta(\tau_1 \boxtimes \tau_2)$ is not an essentially square-integrable representation.

\textbf{Case iii}: $\Pi_{\tvp}(\GSp_4)=\{ \ts:=\pi \},$ where $\pi$ is an irreducible admissible representation of $\GSp_{4}(F)$ 
whose theta lift $\theta(\pi)$ to $\GSO_{3,3}$ is $\Pi \boxtimes \mu \in \Pi(\GSO_{3,3}).$
Note that $\mu= \simi(\tvp)$ via the local class field theory and $\omega_{\Pi}=\mu^2.$

\begin{thm} \label{1-1 for Sp4}
With the notation above, given an $L$-parameter $\vp \in \Phi(\Sp_{4}),$ we fix its lifting $\tvp \in \Phi(\GSp_{4}).$
Then, there is a one-one bijection 
\begin{equation*} 
\Pi_{\vp}({\Sp}_{4}) \overset{1-1}{\longleftrightarrow} \Irr(\cS_{\vp}(\widehat{{\Sp}_{4}})),
\end{equation*}
sending $\sigma \mapsto \rho_{\sigma},$ 
such that we have isomorphisms:
\begin{align*} \label{decompositions for sp4}
V_{\ts_i} ~ ~  & \s \bigoplus_{\sigma \in \Pi_{\ts_i}({\Sp}_{4})} \rho_{\sigma} \boxtimes  \si ~ (i=1, 2), ~~ \mbox{ for Case i}, \\ 
V_{\ts} ~ ~ &\s \bigoplus_{\sigma \in \Pi_{\ts}({\Sp}_{4})} \rho_{\sigma} \boxtimes  \si,  ~~  \mbox{ for Cases ii and iii},
\end{align*}
as representations of the semi-direct product $\cS_{\vp}(\widehat{{\Sp}_{4}}) \rtimes \Sp_{4}(F).$
Here,
$\Pi_{\tilde{\sharp}}({\Sp}_{4})$ denotes the set of equivalence classes of all irreducible constituents of ${\Res}_{{\Sp}_{4}}^{{\GSp}_{4}}(\tilde{\sharp})$ with $\sharp \in \{ \ts_1, ~ \ts_2, ~ \ts\}.$
\end{thm}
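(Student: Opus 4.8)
The plan is to transcribe the proof of Theorem~\ref{1-1 for Sp11} from Section~\ref{pf of 1-1 for Sp11} into the split setting, exploiting three simplifications: $\Sp_4$ is quasi-split, so $\zeta_{\Sp_4} = \mathbbm{1}$ and $\Irr(\cS_{\vp, \scn}(\widehat{\Sp_4}), \mathbbm{1}) = \Irr(\cS_{\vp}(\widehat{\Sp_4}))$; the union in \eqref{def of L-packet sp4} is disjoint by \cite[Theorem~2.3]{gtsp10}; and, most importantly, by \cite[Proposition~2.2]{gtsp10} the two members of a $\GSp_4$-packet never have overlapping restrictions to $\Sp_4$, so no analogue of the exceptional Case~I-(b) occurs. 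Indeed, for $\tvp = \tvp_1 \oplus \tvp_2 \in \Phi(\GSp_4)$ the $\Out(\SO_4)$-symmetry of $\GSO_{2,2} \cong (\GL_2 \times \GL_2)/\{(z, z^{-1})\}$ forces $I^{\Sp_4}(\ts_1) = I^{\Sp_4}(\ts_2) = I^{\Sp_4}(\tvp)$ even when $\tvp_1 \cong \tvp_2 \otimes \chi$ with $\chi$ quadratic --- the point at which the $\Sp_4$ argument genuinely diverges from, and is simpler than, the $\Sp_{1,1}$ one (compare \eqref{iso toward A-1(b)}).

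First I would assemble the basic diagram. Combining the exact sequence \eqref{central ext for Sp4} of \cite[Proposition~2.9]{gtsp10}, namely $1 \to \cS_{\tvp}(\widehat{\GSp_4}) \to \cS_{\vp}(\widehat{\Sp_4}) \to I(\tvp) \to 1$, with the central extensions \eqref{exact isogeny} for $\Sp_4$ and $\GSp_4$ and the identifications $Z((\widehat{\Sp_4})_{\scn}) = Z((\widehat{\GSp_4})_{\scn}) \cong \mu_2(\CC)$ and $\widehat Z_{\tvp, \scn}(\GSp_4) \cong \mu_2(\CC)$ for elliptic $\tvp$ (there $S_{\tvp, \scn}(\widehat{\GSp_4})$ is finite, hence has trivial identity component), the snake lemma gives, in analogy with \eqref{commutative diagrams for Sp11} and \eqref{important exact},
\[
1 \longrightarrow \cS_{\tvp, \scn}(\widehat{\GSp_4}) \longrightarrow \cS_{\vp, \scn}(\widehat{\Sp_4}) \longrightarrow I(\tvp) \longrightarrow 1.
\]
Then I would run the three cases of Section~\ref{para}. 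In \textbf{Case~i} ($\tvp = \tvp_1 \oplus \tvp_2$, both $\tvp_i \in \Phi_{\el}(\GL_2)$, so $\tvp \in \Phi(\GSO_{4,0}) \subset \Phi(\GSO_{2,2})$) the parameter $\vp$ is elliptic, $\widehat Z_{\vp, \scn}(\SO_{2,2}) \cong \mu_2(\CC) \times \mu_2(\CC)$, and, as in Case~I-(a), one builds the analogue of \eqref{case I-(a)}, identifying $\cS_{\vp, \scn}(\widehat{\Sp_4})$ with $\cS_{\vp, \scn}(\widehat{\SO_{2,2}})$ over the diagonal $\mu_2(\CC) \hookrightarrow \mu_2(\CC) \times \mu_2(\CC)$. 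Under this embedding the trivial character of $Z((\widehat{\Sp_4})_{\scn})$ pulls back to exactly $\zeta_{2,2} = \mathbbm{1} \times \mathbbm{1}$ and $\zeta_{4,0} = \sgn \times \sgn$, i.e. the two among the four characters of Remark~\ref{some +_ SO4} attached to $\SO_{2,2}$ and $\SO_{4,0}$, so that
\[
\Irr(\cS_{\vp}(\widehat{\Sp_4})) \overset{1-1}{\longleftrightarrow} \Irr(\cS_{\vp, \scn}(\widehat{\SO_{2,2}}), \zeta_{2,2}) \sqcup \Irr(\cS_{\vp, \scn}(\widehat{\SO_{2,2}}), \zeta_{4,0}).
\]
Composing with Proposition~\ref{bij bw consti} for the pairs $(\GSp_4, \GSO_{2,2})$ and $(\GSp_4, \GSO_{4,0})$ --- which matches $\Res^{\GSp_4}_{\Sp_4}(\ts_1)$ with the theta lift of $\Res^{\GSO_{2,2}}_{\SO_{2,2}}(\tau_1 \boxtimes \tau_2)$ and $\Res^{\GSp_4}_{\Sp_4}(\ts_2)$ with that of $\Res^{\GSO_{4,0}}_{\SO_{4,0}}(JL(\tau_1) \boxtimes JL(\tau_2))$ --- and with Theorem~\ref{1-1 for SO4}, \eqref{useful decomp}, \eqref{bij 333} yields the bijection and the module decompositions of Theorem~\ref{1-1 for Sp4}.

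For \textbf{Case~ii} ($\tvp = \tvp_1 \oplus \tvp_2$ with some $\tvp_i$ non-elliptic, $\tvp \in \Phi(\GSO_{2,2}) \smallsetminus \Phi(\GSO_{4,0})$) only $\SO_{2,2}$ participates: mirroring Case~II of Section~\ref{pf of 1-1 for Sp11}, the singleton $\Pi_{\tvp}(\GSp_4)$ gives $\cS_{\tvp}(\widehat{\GSp_4}) = 1$, hence $\cS_{\vp}(\widehat{\Sp_4}) \cong I(\tvp) \cong \cS_{\vp}(\widehat{\SO_{2,2}})$, and a central-character computation as in the verification of \eqref{Zsc} (with $M$ a Siegel-type Levi) upgrades this to $\cS_{\vp, \scn}(\widehat{\SO_{2,2}}) \cong \cS_{\vp, \scn}(\widehat{\Sp_4})$ identifying $\zeta_{2,2} = \mathbbm{1}$ with $\mathbbm{1}$; Proposition~\ref{bij bw consti} and Theorem~\ref{1-1 for SO4} then conclude. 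For \textbf{Case~iii} ($\tvp$ irreducible $4$-dimensional, $\iota_{3,3}(\tvp) \in \Phi(\GSO_{3,3})$) one argues as in Case~III: via the projections $\Sp_4(\CC) \twoheadrightarrow \SO_5(\CC)$ and $\SL_4(\CC) \twoheadrightarrow \SO_6(\CC)$ one gets $\cS_{\vp, \scn}(\widehat{\Sp_4}) \subset \cS_{\vp, \scn}(\widehat{\SO_{3,3}})$ of index $2$, and a Frobenius-reciprocity computation shows that each $\rho \in \Irr(\cS_{\vp, \scn}(\widehat{\SO_{3,3}}), \zeta_{3,3})$ restricts irreducibly, with trivial central character on $\mu_2(\CC)$; Proposition~\ref{bij bw consti} and Theorem~\ref{1-1 for SO6} finish the proof. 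I expect the main obstacle to be the central-character bookkeeping across the three sizes of dual orthogonal group: one must verify in each case precisely which characters of $Z((\widehat{\SO_\dagger})_{\scn})$ or $Z((\widehat{\SO_\flat})_{\scn})$ restrict to the trivial character of $Z((\widehat{\Sp_4})_{\scn}) = \mu_2(\CC)$, and confirm these are exactly the ones attached (via the Kottwitz isomorphism) to the orthogonal groups $\SO_{2,2}$, $\SO_{4,0}$, $\SO_{3,3}$ supporting theta lifts from $\Sp_4$ --- the forms ${{\SO}^*_{1,1}}^{-+}$, ${{\SO}^*_{1,1}}^{+-}$, which restrict to $\sgn$, being precisely the excluded ones. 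Everything else is a transcription of Section~\ref{pf of 1-1 for Sp11} with the Case~I-(b) analysis removed.
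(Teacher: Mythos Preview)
Your proposal tracks the paper's Appendix proof very closely: both transcribe the $\Sp_{1,1}$ argument case by case, citing the same diagrams and the same appeals to Proposition~\ref{bij bw consti} and Theorems~\ref{1-1 for SO4} and~\ref{1-1 for SO6}. The one place the paper is more economical is Case~iii: since both $\Sp_4$ and $\SO_{3,3}$ are quasi-split (so $\zeta = \mathbbm{1}$ on each side and $\Irr(\cS_{\vp,\scn}, \mathbbm{1}) = \Irr(\cS_\vp)$), the paper works directly at the level of $\cS_\vp$, invoking only the isomorphism $S_\vp(\widehat{\SO^*_{3,0}}) \cong I(\tvp) \cong S_\vp(\widehat{\Sp_4})$ from~\eqref{case III below} and dispensing with the index-$2$ inclusion of $\cS_{\vp,\scn}$-groups and the Frobenius-reciprocity computation altogether.

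A caution about your claim that ``no analogue of the exceptional Case~I-(b) occurs.'' You correctly observe (via genericity of $\ts_1$) that $I^{\Sp_4}(\ts_i) = I^{\Sp_4}(\tvp)$ even when $\tvp_1 \cong \tvp_2\chi$, but this does \emph{not} yield $I^{\SO_{2,2}}(\tvp_1,\tvp_2) = I^{\Sp_4}(\tvp)$: the swap $(\tvp_1,\tvp_2) \leftrightarrow (\tvp_2,\tvp_1)$ is only an \emph{outer} automorphism of $\GSO_{2,2}$, so $\tau_1 \boxtimes \tau_2 \not\cong \tau_2 \boxtimes \tau_1$ as $\GSO_{2,2}$-representations, $\chi \notin I^{\SO_{2,2}}(\tau_1 \boxtimes \tau_2)$, and the identification $\cS_{\vp,\scn}(\widehat{\Sp_4}) \cong \cS_{\vp,\scn}(\widehat{\SO_{2,2}})$ you invoke is in fact a proper index-$2$ inclusion in that subcase, exactly as in~\eqref{case I-(b)}. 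Correspondingly, the split analogue of Proposition~\ref{bij bw consti} for the pair $(\GSp_4,\GSO_{2,2})$ is not literally a bijection of constituent sets --- the theta lift $\Pi(\GSO_{2,2}) \to \Pi(\GSp_4)$ identifies $\tau_1\boxtimes\tau_2$ with its swap --- and for the primitive $\tvp_0$ of Section~\ref{sec of example} one finds $|\Pi_{\pi\boxtimes\pi\chi}(\SO_{2,2})|=1$ while $|\Pi_{\ts_1}(\Sp_4)|=2$. The paper's Appendix, which likewise cites only~\eqref{case I-(a)} for all of Case~i, leaves this same subcase unaddressed; both arguments are tacitly leaning on the result already being established in~\cite{gtsp10}.
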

\begin{proof}
We follow the proof of the case of $\Sp_{1,1}$ in Section \ref{pf of 1-1 for Sp11}.

\textbf{Case i}: 
Since $\cS_{\tvp}(\widehat{{\GSp}_{4}}) = S_{\tvp}(\widehat{{\GSp}_{4}}) \s \ZZ/2\ZZ,$ we note that the trivial character $\mathbbm{1}$ on  $S_{\tvp}(\widehat{{\GSp}_{4}})$ corresponds to $\ts_1$ and the other $\sgn$  corresponds to $\ts_2$ (see from \cite[Section 7]{gt}).
Since the trivial character $\mathbbm{1}$ on $\mu_2(\CC)$ is lifted to the two characters $\mathbbm{1} \times \mathbbm{1} = \zeta_{2,2}$ and $\sgn \times \sgn = \zeta_{4,0}$ on $\mu_2(\CC) \times \mu_2(\CC)$ under the embedding $a \mapsto (a, a)$ from  $\mu_2(\CC)$ to $\mu_2(\CC) \times \mu_2(\CC),$
we have the following bijection 
\begin{equation*} 
\Irr(S_{\vp, \scn}(\widehat{{\Sp}_{1,1}}), \mathbbm{1}) = \Irr(S_{\vp}(\widehat{{\Sp}_{4}})) \overset{1-1}{\longleftrightarrow} 
\Irr(S_{\vp, \scn}(\widehat{{\SO}^*_{1,1}}), \zeta_{2,2}) \sqcup \Irr(S_{\vp, \scn}(\widehat{{\SO}^*_{1,1}}), \zeta_{4,0}).
\end{equation*}
Note that, via the Kottwitz isomorphism \cite[Theorem 1.2]{kot86}, the characters $\zeta_{2,2}$ and $\zeta_{4,0}$ respectively correspond to ${\SO}_{2,2}$ and ${\SO}_{4,0},$ which are non quasi-split $F$-inner forms of $\SO_4.$
Considering the characters $\zeta_{2,2}$ and $\zeta_{4,0}$ as characters on $S_{\tvp, \scn}(\widehat{{\GSp}_{1,1}}),$  due to \eqref{case I-(a)}, we have the following bijections:
\begin{equation*} 
\Irr(S_{\vp, \scn}(\widehat{{\Sp}_{1,1}}), \zeta_{2,2}) \overset{1-1}{\longleftrightarrow} 
\Irr(S_{\vp, \scn}(\widehat{{\SO}^*_{1,1}}), \zeta_{2,2}) = \Irr(S_{\vp}(\widehat{{\SO}_{2,2}}) 
\overset{1-1}{\longleftrightarrow} \Pi_{\tau_1 \boxtimes \tau_2}({\SO}_{2,2}),
\end{equation*}
\begin{equation*} 
\Irr(S_{\vp, \scn}(\widehat{{\Sp}_{1,1}}), \sgn \times \sgn ) \overset{1-1}{\longleftrightarrow} 
\Irr(S_{\vp, \scn}(\widehat{{\SO}^*_{1,1}}), \zeta_{4,0}) 
\overset{1-1}{\longleftrightarrow} \Pi_{JL(\tau_1) \boxtimes JL(\tau_2)}({\SO}_{4,0}).
\end{equation*}
Since the character $\zeta_{2,2}$ corresponds to $\ts_1$ and the other $\zeta_{4,0}$ corresponds to $\ts_2$ from \cite[Section 7.2]{gtan12},
Proposition \ref{bij bw consti} and Theorem \ref{1-1 for SO4} yield: 
\begin{equation*} 
\Irr(S_{\vp, \scn}(\widehat{{\SO}^*_{1,1}}), \zeta_{2,2}) 
\overset{1-1}{\longleftrightarrow} \Pi_{\tau_1 \boxtimes \tau_2}({\SO}_{2,2})
\overset{1-1}{\longleftrightarrow} 
\Pi_{\ts_1}({\Sp}_{4}),
\end{equation*}
\begin{equation*} 
\Irr(S_{\vp, \scn}(\widehat{{\SO}^*_{1,1}}), \zeta_{4,0}) 
\overset{1-1}{\longleftrightarrow} \Pi_{JL(\tau_1) \boxtimes JL(\tau_2)}({\SO}_{4,0})
\overset{1-1}{\longleftrightarrow} 
\Pi_{\ts_2}({\Sp}_{4}).
\end{equation*}
Using Proposition \ref{bij 333}, Theorem \ref{1-1 for SO4}, and the isomorphism 
$S_{\vp, \scn}(\widehat{{\Sp}_{1,1}}) \s S_{\vp, \scn}(\widehat{{\SO}^*_{1,1}})$ in \eqref{case I-(a)}, 
we thus have the following isomorphism
\[
V_{\ts_i} ~ ~  \s \bigoplus_{\sigma \in \Pi_{\ts_i}({\Sp}_{4})} \rho_{\sigma} \boxtimes  \si ~ (i=1, 2),
\]
as representations of the semi-direct product $S_{\vp}(\widehat{{\Sp}_{4}}) \rtimes \Sp_{4}.$
This completes the proof of Theorem \ref{1-1 for Sp4} for {Case i}.

\textbf{Case ii}: 
From \eqref{case II} and \eqref{iso S Sp11 and SO11}, we have
\[
\Irr(\cS_{\vp, \scn}(\widehat{{\SO}^*_{1,1}}), \zeta_{2,2}) = \Irr(\cS_{\vp}(\widehat{{\SO}_{2,2}})) \overset{1-1}{\longleftrightarrow} 
\Irr(\cS_{\vp, \scn}(\widehat{{\Sp}_{1,1}}), \mathbbm{1}) = \Irr(\cS_{\vp}(\widehat{{\Sp}_{4}}).
\]
From Proposition \ref{bij 333} and Theorem \ref{1-1 for SO4}, we thus proved Theorem \ref{1-1 for Sp4} for {Case ii}.

\textbf{Case iii}: 
From the isomorphism \eqref{case III below}, we have
\[
S_{\vp} (\widehat{{\SO}^*_{3,0}}) \s  S_{\vp}(\widehat{{\Sp}_{4}}).
\]
From Proposition \ref{bij 333} and Theorem \ref{1-1 for SO6}, Theorem \ref{1-1 for Sp4} for {Case iii} follows.
Therefore, the proof of Theorem \ref{1-1 for Sp4} is complete. 
\end{proof}
\subsection*{Acknowledgements}
The author is deeply indebted to Wee Teck Gan for fruitful and inspiring discussions with him. 
Much of this work was done during the author's visit at Max-Planck-Institut f\"{u}r Mathematikat in June and July 2014.
The author is grateful to the institute for their incredible research environment.
This work was partially supported by AMS-Simons Travel Grants. 
%

\end{document}